\date{September 13, 2021}
\title[ %
]{A generalization of Zakalyukin's lemma, and 
symmetries of surface singularities}
\author{A.~Honda}
\address[Atsufumi Honda]{
Department of Applied Mathematics, 
Faculty of Engineering, Yokohama National University,
79-5 Tokiwadai, Hodogaya, Yokohama 240-8501, Japan
}
\email{honda-atsufumi-kp@ynu.ac.jp}
\author{K.~Naokawa}
\address[Kosuke Naokawa]{%
Department of Computer Science, 
Faculty of Applied Information Science,
Hiroshima Institute of Technology,  
2-1-1 Miyake, Saeki, Hiroshima, 731-5193, Japan
}
\email{k.naokawa.ec@cc.it-hiroshima.ac.jp}
\author{K. Saji}
\address[Kentaro Saji]{%
  Department of Mathematics,
  Faculty of Science,
  Kobe University,
  Rokko, Kobe 657-8501}
\email{saji@math.kobe-u.ac.jp}
\author{M. Umehara}
\address[Masaaki Umehara]{%
  Department of Mathematical and Computing Sciences,
  Tokyo Institute of Technology,
  Tokyo 152-8552, Japan}
\email{umehara@is.titech.ac.jp}
\author{K. Yamada}
\address[Kotaro Yamada]{%
  Department of Mathematics,
  Tokyo Institute of Technology,
  Tokyo 152-8551, Japan}
\email{kotaro@math.titech.ac.jp}
\keywords{
  {singularity},
  {wave front},
  {cuspidal edge},
  {first fundamental form}}
\subjclass[2010]{53A10, 53A35; 53C42, 33C05}
\newcommand{\op}[1]{{\operatorname{#1}}}
\newcommand{\vect}[1]{{\boldsymbol{#1}}}
\newcommand{\R}{\boldsymbol{R}}
\newcommand{\mc}[1]{{\mathcal #1}}
\newcommand{\mb}[1]{{\mathbf #1}}
\newcommand{\pmt}[1]{{\begin{pmatrix} #1  \end{pmatrix}}}
\renewcommand{\phi}{\varphi}
\renewcommand{\epsilon}{\varepsilon}
\renewcommand{\det}{\op{det}}
\numberwithin{equation}{section}
\newtheorem{Theorem}{Theorem}[section]
\newtheorem{Proposition}[Theorem]{Proposition}
\newtheorem{Corollary}[Theorem]{Corollary}
\newtheorem{Lemma}[Theorem]{Lemma}
\newtheorem{Fact}[Theorem]{Fact}
\theoremstyle{definition}
\newtheorem{Def}[Theorem]{Definition}
\newtheorem{Remark}[Theorem]{Remark}
\newtheorem{Example}[Theorem]{Example}
\newtheorem*{acknowledgments}{Acknowledgments}
       \def\@makefnmark{%
               \leavevmode
               \raise.9ex\hbox{\check@mathfonts
                       \fontsize\sf@size\z@\normalfont%
                               \@thefnmark}%
       }
\thanks{%
The first author was partially supported by 
Grant-in-Aid for Early-Career Scientists
 No.~19K14526 and No. 20H01801. 
The second author was partially supported by 
Grant-in-Aid for Young Scientists (B) No.~17K14197,
and the third author
was 
partially supported by  Grant-in-Aid for 
Scientific Research (C) No.\ 18K03301.
The forth author 
was partially 
supported by Grant-in-Aid for 
Scientific Research (B) No.\ 21H00981.
The fifth author 
was partially 
supported by Grant-in-Aid for 
Scientific Research (B) No.\ 17H02839.
}%
\begin{document}
\maketitle

\begin{abstract}
Zakalyukin's lemma asserts that the coincidence
of the images of two wave front germs
implies the right equivalence of corresponding map
germs under a certain genericity assumption.
The purpose of this paper is to give an improvement of this
lemma for frontals.
Moreover, we give several applications for
singularities on surfaces.
\end{abstract}

\section*{Introduction}
Let $p$ be a fixed point on the $n$-dimensional Euclidean
space $\R^n$ ($n\ge 1$) and $U$ a connected
neighborhood of $p$ in $\R^{n}$. 
In this paper, we set $r=\infty$ or $r=\omega$
and \lq\lq$C^r$'' means smoothness if $r=\infty$ and
real analyticity if $r=\omega$.

A $C^r$-map $f:U\to \R^{n+1}$ is called a {\it frontal} or
a {\it frontal map}  if $f$ admits a unit normal 
$C^r$-vector field $\nu$ defined on $U$.
By parallel transport in $\R^{n+1}$, the vector field 
$\nu$ can be identified
with its induced Gauss map $\nu:U\to S^n$, where $S^n$ is the
unit sphere centered at the origin of $\R^{n+1}$.
In this setting, the pair of $f$ and $\nu$ induces a
$C^r$-map (called the {\it Legendrian lift} of $f$)
$$
L_f:=(f,\nu):U\to \R^{n+1}\times S^n.
$$
If $L_f$ is an immersion, then $f$ is called a {\it wave front}.
In the case of $n=2$, cuspidal edges and swallowtails are
singular points appearing on wave fronts.
Germs of cuspidal cross caps are not wave fronts, 
but are  frontals. On the other hand, germs of cross caps are not frontals.

\begin{figure}[htb]
\begin{center}
  \includegraphics[height=2.0cm]{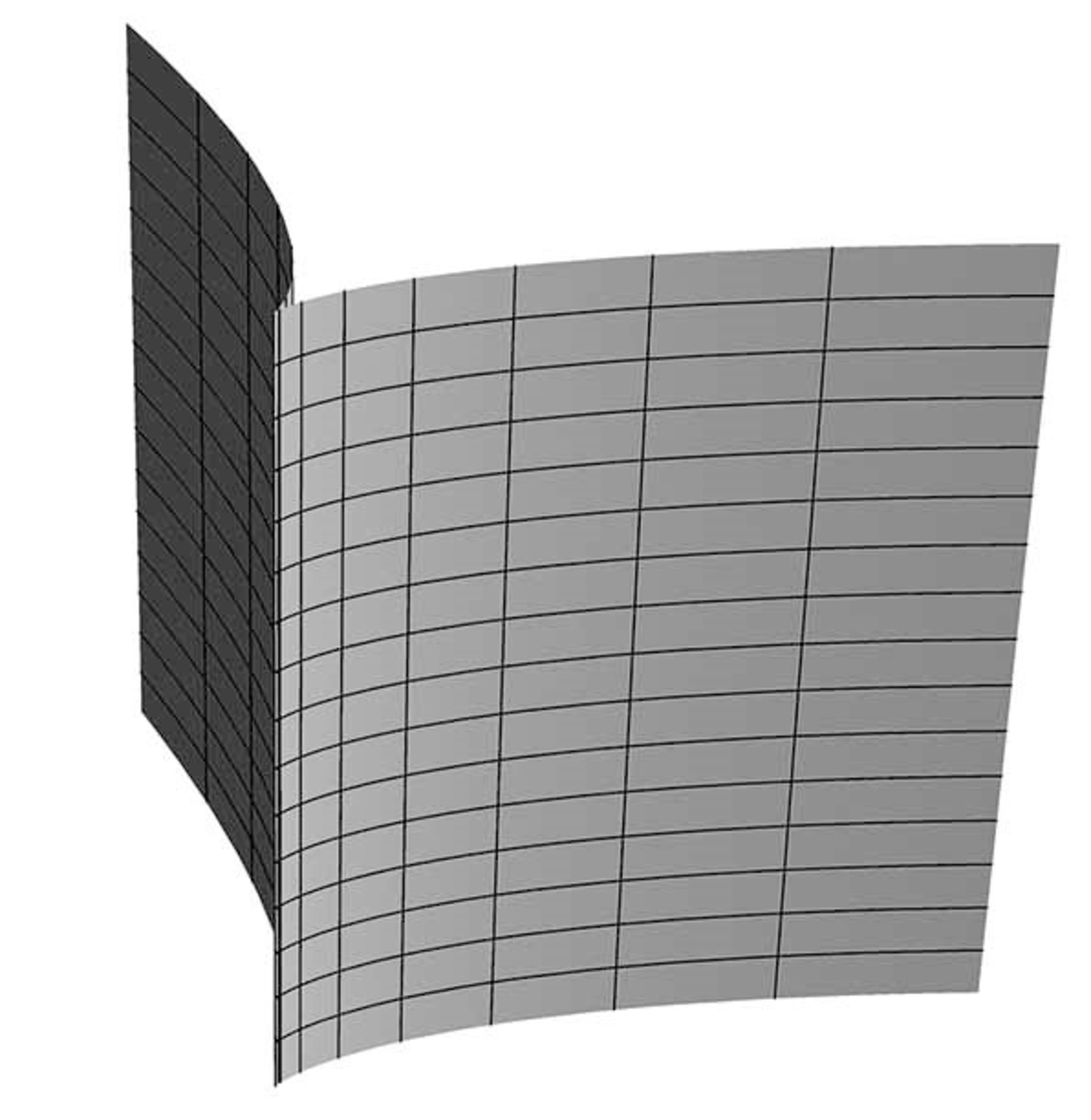}  
\,\,\, \includegraphics[height=1.5cm]{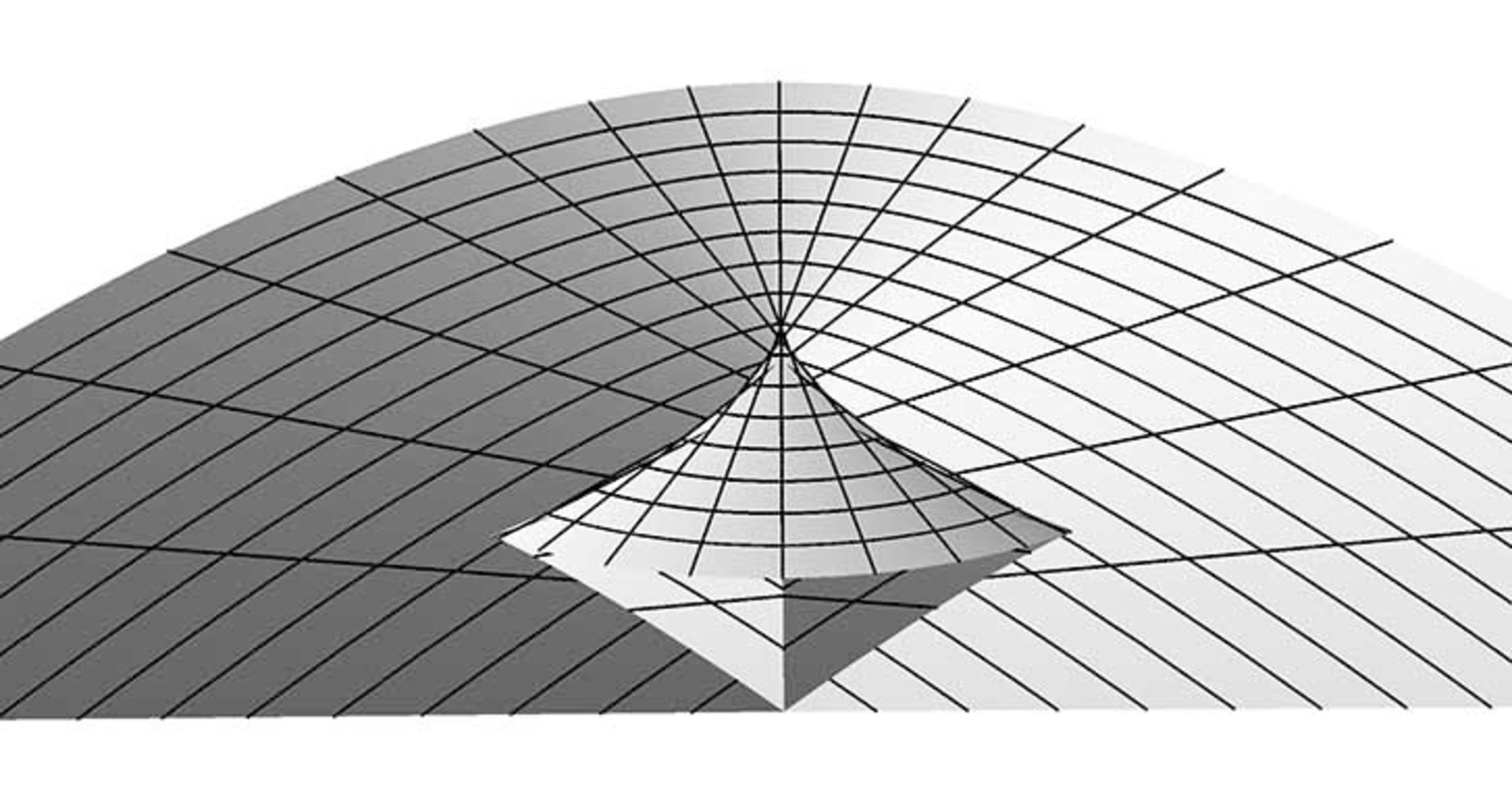} \,\,\,
  \includegraphics[height=1.7cm]{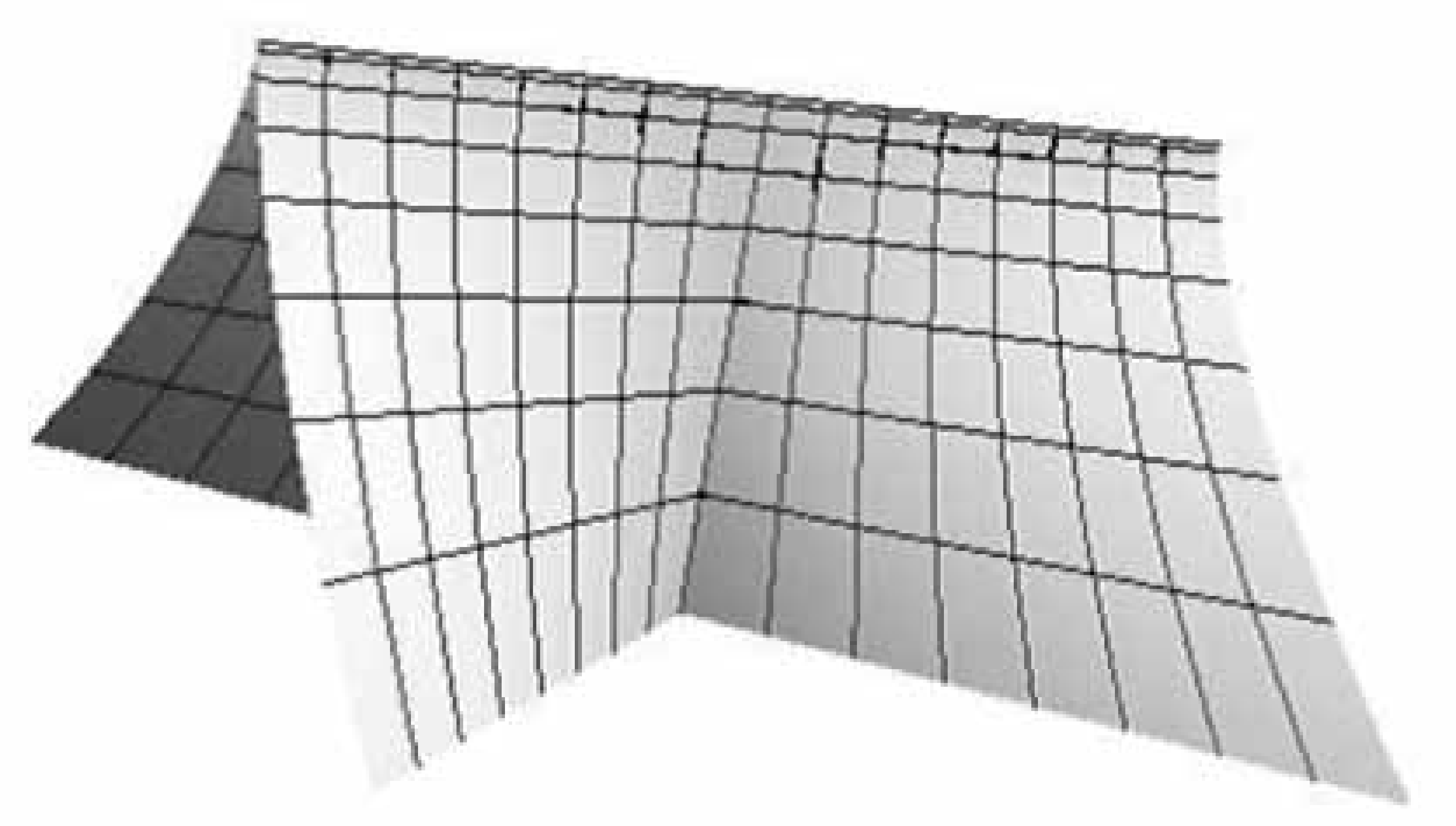}  \,\,\, 
	\includegraphics[height=2.0cm]{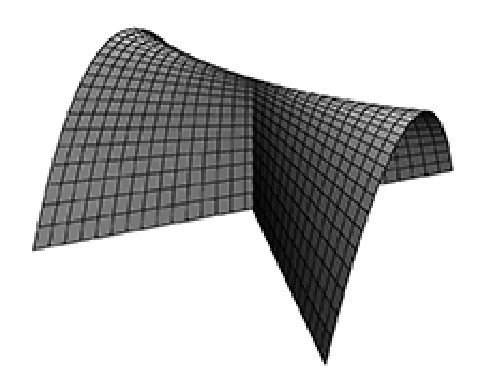}
\caption{A cuspidal edge, swallowtail, cuspidal cross cap,
cross cap,
from the left.}\label{Fig:four}
\end{center}
\end{figure}

Zakalyukin \cite{Z} pointed out that the coincidence
of the images of two wave front germs
induces the right equivalence of corresponding wave front
germs under a certain properness of the map germs.
It is then natural to ask under what  possible weaker conditions
the conclusion of Zakalyukin's lemma is still true.
In this paper, we try to give 
such a condition as follows:

Let $f:U\to \R^{n+1}$ be a continuous map
and $V$ be an open neighborhood of $p\in U$.
Then $f$ is called 
{\it $V$-proper} at $p$
if, for $\epsilon(>0)$,
there exists $r\in (0,\epsilon)$ 
such that
$(f|_V)^{-1}(\overline{B(f(p),r)})$ is a compact 
subset of $V$
(cf. 
Definition \ref{def:P2}
in Section 1),
where $B(f(p),r)$ is the open ball centered at $f(p)$ of radius $r$,
$\overline{B(f(p),r)}$ is its closure in $\R^{n+1}$ 
and $f|_V$ is the restriction of $f$ to the subset $V$.
The following assertion gives a Zakalyukin-type lemma:

\medskip
\noindent
{\bf Theorem A.}
{\it Let $U_i$ $(i=1,2)$ be a 
neighborhood of $p_i\in \R^{n}$ and let
$f_i:U_i\to \R^{n+1}$ $(i=1,2)$
be two $C^r$-frontal maps
with unit normal vector fields $\nu_i$ along $f_i$
satisfying 
\begin{itemize}
\item[(a1)] $f_1(U_1)$ is a subset of $f_2(U_2)$
and $(P:=)f_1(p_1)=f_2(p_2)$,
\item[(a2)] 
$f_2$ is 
$U_2$-proper\footnote{
We cannot drop the
condition that $f_2$ is $U_2$-proper.
In fact, the condition $f_2^{-1}(P)=\{p_2\}$
implies only the existence of a neighborhood $V(\subset U_2)$
of $p_2$ so that $f_2$ is $V$-proper (see  
Theorem~\ref{prop:I1}),
but $V$ may not coincide with $U_2$ in general.
} 
at $p_2$
and $f_2^{-1}(P)=\{p_2\}$,
\item[(a3)] the regular set of $f_i$ $(i=1,2)$
 is open dense in $U_i$,
\item[(a4)] each 
Legendrian lift $L_{f_i}$ $(i=1,2)$ is injective on a certain
neighborhood of $p_i$ $($if $f_i$ is a wave front, 
this condition is satisfied$)$.
\end{itemize}
Then there exists a homeomorphism $\psi:V_1\to V_2$
between certain connected neighborhoods $V_i$ $(i=1,2)$
of $p_i$
satisfying the following properties:
\begin{enumerate}
\item[(1)] $\overline{V_i}\subset U_i$,
\item[(2)] $f_1=f_2\circ \psi$ 
and $\nu_1=\pm \nu_2\circ \psi$ hold on $V_1$.
\end{enumerate}
Moreover, if $f_1$ and $f_2$ are wave fronts, then
$\psi$ can  be taken as a $C^r$-diffeomorphism.
}

A Zakalyukin-type lemma for wave fronts
was given in \cite{KRSUY} (see also \cite{KRSUY2}), 
which was applied to prove criteria for 
cuspidal edges and swallowtails under the 
assumption that
$f_1^{-1}(f_1(p_1))$ is finite as well as
$f_2^{-1}(f_2(p_2))$. In the above theorem,
the conclusion is obtained without any additional assumption
for $f_1$. (The standard cuspidal edge and the
standard swallowtail satisfy the condition (a2) for any choice of
an open neighborhood $U$ of the singular point $(0,0)$
(see Proposition \ref{prop:Im}).
So, to prove  the criterion for swallowtails,
Claim 1 in \cite{KRSUY2} is not needed.) 
The map $\psi$ is called
{\it the connecting map} between $f_1$ and $f_2$.
In the statement of Theorem A,  one cannot expect that $\psi$
is smooth. In fact, if
$$
f_1(t):=(t^2,t^3),\qquad f_2(t)=(t^6,t^9) \qquad (t\in \R),
$$
then the connecting map is given by $\psi(t)=t^{1/3}$
which is not a diffeomorphism at $t=0$.
The authors are mainly interested in the case $n=2$. 
In fact, a real analytic frontal in $\R^3$ usually admits
a non-trivial isometric deformation at
singular points
 (cf. \cite{NUY,HNUY, HNSUY}),
and such isometric deformations of the surfaces are
closely related to the properties of
isomers (cf. Definition \ref{def:IS})
as seen in the authors' previous work \cite{HNSUY},
and we shall discuss isomers of generalized
cuspidal edges in the final section (Section 5) in this paper. 
Here, we consider generalized cuspidal edges as follows:
We let $I$ be a closed interval 
and  fix a $C^r$-embedded curve $\mb c:I\to \R^3$,
denoting $C(:=\mb c(I))$ its image.

\begin{Def}\label{def:gC}
Let $U$ be a domain in the $uv$-plane 
$(\R^2;u,v)$
containing an interval $J\times \{0\}$ on the $u$-axis.
A $C^r$-map $f:U\to \R^3$ defined on a
domain $U$ of $\R^2$ is called a
{\it $C^r$-differentiable generalized cuspidal edge along $C$}
if $f(J\times \{0\})$ contains $C$ and
the singular set of $f$ contains  $J\times \{0\}$,
and there exist 
\begin{itemize}
\item a  diffeomorphism $\phi$ from a tubular neighborhood $V$
of $J\times \{0\}$ to the $st$-plane $(\R^2;s,t)$
satisfying $\phi(J\times \{0\})=[-1,1]\times \{0\}$,
and
\item  a diffeomorphism  $\Phi$
from a tubular neighborhood of
$C$ to $\R^3$
\end{itemize}
such that
$\Phi\circ f\circ \phi^{-1}(s,t)=(t^2, t^3\alpha(s,t),s)$
holds on $\phi(V)$.

On the other hand,
a point $p\in U$
is called
{\it $C^r$-differentiable generalized cuspidal edge point}
of a $C^r$-map $f:U\to \R^3$ defined on a
domain $U$ of $\R^2$
if there exist a 
local diffeomorphism $\psi$
satisfying $\psi(p)=(0,0)$ 
and a local diffeomorphism $\Psi$
in $\R^3$
such that
$\Psi\circ f\circ \psi^{-1}(s,t)=(t^2, t^3\alpha(s,t),s)$
holds on a neighborhood of the origin in the $st$-plane.
\end{Def}

Since $f_0(s,t):=(t^2, t^3\alpha(s,t),s)$ 
has the non-vanishing normal vector field
$$
\tilde \nu_0(s,t) 
:=
\left(-3t \alpha(s,t)-t^2 \alpha_{t}(s,t),
\,2,\,-2 t^3 \alpha_{s}(s,t)\right),
$$
generalized cuspidal edges are all frontals.
Cuspidal edges and cuspidal cross caps are
typical examples of 
generalized cuspidal edges.
However, since generalized cuspidal edges are not
wave fronts in general (e.g. cuspidal cross caps),
the smoothness of connecting maps $\psi$ 
does not follow from Theorem A directly.
We let $I$ be a closed interval 
and  fix a $C^r$-embedded curve $\mb c:I\to \R^3$,
denoting by $C(:=\mb c(I))$ its image.
As an improvement of the statement of Theorem~A
for such singular points, 
we show the following:

\medskip
\noindent
{\bf Theorem B.}
{\it 
Let $U_i$ $(i=1,2)$ be an
open subset containing a closed interval $I_i\times \{0\}$
on the $u$-axis in $(\R^2;u,v)$, 
and let
$f_i:U_i\to \R^{3}$ $(i=1,2)$
be a 
$C^r$-differentiable 
generalized cuspidal edge along 
the same embedded space curve $C$.
If 
$f_1(U_1) \subset f_2(U_2)$
then  there exist
\begin{itemize}
\item an open subset $V_i(\subset U_i)$ containing the
interval $I_i\times \{0\}$, and
\item  
a $C^r$-diffeomorphism
$\psi:V_1\to V_2$
\end{itemize}
such that
$f_1=f_2\circ \psi$ 
and $\nu_1=\pm \nu_2\circ \psi$
hold on $V_1$, where
$\nu_i$ $(i=1,2)$ 
is a unit normal vector field along $f_i$.
As a consequence, under the assumption of Theorem A,
if $p_1$ and $p_2$ are both generalized cuspidal edge points,
then the connecting map $\psi$ can be taken as 
a diffeomorphism.
}

\medskip
The corresponding assertion for cross caps is
given in the authors' previous work \cite{HNSUY2}.
To give an application of Theorem B, 
we  prepare several terminologies 
which are useful for investigating the symmetries of 
surfaces at singular points:

\begin{figure}
\begin{center}\footnotesize
 \begin{tabular}{cc}
  \includegraphics[width=1.9in]{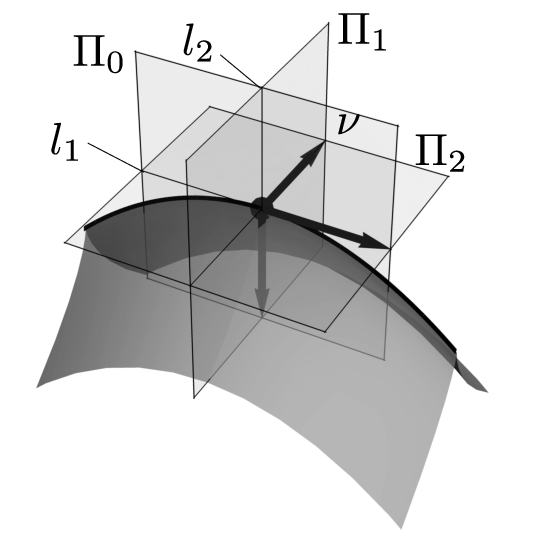} &
  \includegraphics[width=1.8in]{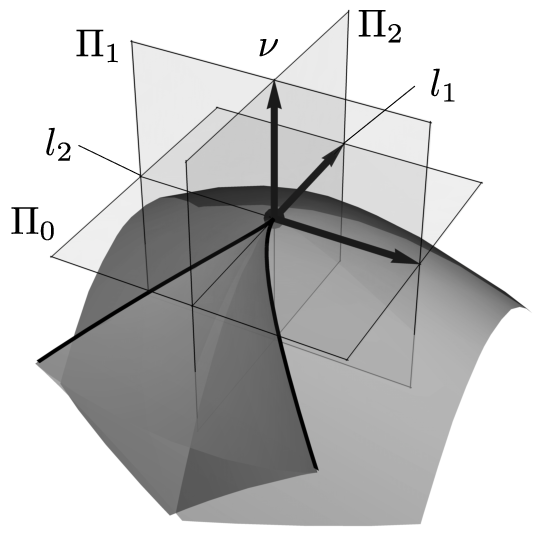}
\\
  a cuspidal edge& a swallowtail 
 \end{tabular}
\end{center}
\caption{The limiting tangent plane $\Pi_0$,  
the normal plane $\Pi_1$
and the co-normal plane $\Pi_2$
for a cuspidal edge (left)
and a swallowtail (right)
}\label{fig:frontal-planes}
\end{figure}

\begin{Def}
Let $p\in U$ be a co-rank one singular point of a frontal map
$f:U\to \R^3$.  Then there exists  a local coordinate system $(u,v)$
centered at $p$ such that $f_v(p)=\vect{0}$, and the
line
$$
l_1:=\{f(p)+t f_u(p)\,;\, t\in \R\}
$$
is defined, which is called the {\it tangent line} of $f$ at $p$
(see Figure \ref{fig:frontal-planes}).
The plane $\Pi_0$ passing through $f(p)$
which is perpendicular to the unit normal vector $\nu$ of $f$
at $f(p)$ is called the {\it limiting tangent plane}
of $f$ at $p$.
The line $l_2$ passing through $f(p)$
lying in $\Pi_0$ which is perpendicular to $l_1$
is called the {\it co-normal line} of $f$ at $p$.

On the other hand, 
the plane $\Pi_1$ passing through $f(p)$ 
which is
perpendicular to  the tangent line $l_1$ is 
called the {\it normal plane} 
of $f$ at $p$. 
Finally, 
the plane $\Pi_2$  passing through $f(p)$
spanned by the vectors $\nu(p)$ and $f_u(p)$
is called the {\it co-normal plane} at $p$.
\end{Def}

By definition, the intersection of the two planes
$\Pi_0$ and $\Pi_1$ is the co-normal line $l_2$.
When $p$ is a cuspidal edge, then 
one of the two half-lines in the co-normal line $l_2$ 
emanating from $f(p)$ points in the
direction where the image of $f$ lies, 
which is called the {\it cuspidal direction}.
The section of the image of $f$ by
$\Pi_1$ at $f(p)$
gives a cusp
(cf. \cite{F} and \cite{HNSUY}), 
which is called the {\it sectional cusp}
at $f(p)$. 
The cuspidal direction is the line
in  the normal plane $\Pi_1$ at $f(p)$
which bisects the cusp.
On the other hand, if $p$ is a swallowtail, 
then the projection of the singular set image of $f$ to 
$\Pi_0$ forms a cusp in $\Pi_0$
(cf. \cite{MSUY}).
We show the following:

\medskip
\noindent
{\bf Theorem C.}
{\it 
Let $f:U\to \R^3$ be a $C^r$-map defined on
a non-empty open subset $U\,(\subset \R^2)$, and let
$p\in U$ be a cuspidal edge,
a swallowtail, or a cuspidal cross cap.
Suppose that $f$ is $U$-proper at $p$,
$f^{-1}(f(p))=\{p\}$ and
there exist an isometry $T$ of $\R^3$ fixing $f(p)$
and an open neighborhood $V$ of $p$
such that $T\circ f(V)\subset f(U)$.
Then $T$ is an involution.
If $T$ is not the identity map, then it
is
\begin{itemize}
\item[(i)] the reflection with respect to the limiting tangent plane $\Pi_0$, 
\item[(ii)] the reflection with respect to the normal plane $\Pi_1$, 
\item[(iii)] the reflection with respect to the co-normal plane $\Pi_2$, or
\item[(iv)] the $180^\circ$-rotation with respect to the
co-normal line $l_2$.
\end{itemize}
Moreover, there exist a connected open
neighborhood $W(\subset V)$ of $p$
and a $C^r$-involution $\psi:W\to W$
such that $f\circ \psi=T\circ f$ on $W$.
Furthermore, the following assertions hold:
\begin{enumerate}
\item[(c1)] If $p$ is a cuspidal edge or a 
cuspidal cross cap singular point, 
then ${\rm (iii)}$ never happens.
Moreover, if
$p$ is a point at which
the limiting normal curvature does not vanish, 
then only ${\rm (ii)}$  happens.
\item[(c2)] 
If $p$ is a swallowtail or a cuspidal cross cap singular point, 
then 
each point of the image $f(S)$ of the self-intersection set $S(\subset W)$
of $f$ is fixed by $T$.
\item[(c3)] If $p$ is a swallowtail,
only ${\rm (iii)}$ happens.
\end{enumerate}
}

The assumption that
$f$ is $U$-proper at $p$
and $f^{-1}(f(p))=\{p\}$
is not artificial because
if a smooth map has a singular point 
giving
a cuspidal edge,
a swallowtail, or a cuspidal cross cap
singularity,
then the restriction of 
the map to a sufficiently small neighborhood
satisfies such a property (cf. Proposition \ref{prop:Im}).
The corresponding assertions for cross cap singular points
have been shown in \cite{HNSUY2}.
The  assertion (c1) contains a symmetric property of
cuspidal edges 
with non-zero limiting normal curvature,
which has been shown in \cite[Theorem 5.1]{HNSUY}
as a special case.

Also, in the authors' previous work \cite{HNSUY} 
(see also \cite{HNSUY1}),
\lq\lq isomers'' of a given real analytic cuspidal edge $f$ 
were introduced, which are
cuspidal edges with the same first fundamental form as $f$
whose singular set image coincides with that
of a given cuspidal edge $f$ but their images are
not congruent to that of $f$.
By Theorem B, we can use the fact that
image equivalence of admissible generalized cuspidal
edges is the same as right-left equivalence of them, like as in the
case of cuspidal edges.
As a consequence, almost all assertions on isomers of
real analytic cuspidal edges in \cite{HNSUY} and 
\cite{HNSUY1} can be generalized for
real analytic
admissible generalized cuspidal
edges. We will prove this fact at the end 
of this paper. Moreover, in
the authors' previous works \cite{HNSUY-O1} and \cite{HNSUY-O2},
isomers of curved foldings are also discussed,
which can be considered as analogues of isomers of cuspidal edges for
curved foldings.
We also point out the existence of a canonical map 
from the class of real analytic 
admissible generalized cuspidal
edges to the class of real analytic curved
foldings by which the isomers of
them are obtained as the image of those of the 
generalized cuspidal edge.

The paper is organized as follows:
In Section 1, we discuss properness of continuous maps
at a given point. In Section 2, we prove Theorem~A.
In Section 3, we prove Theorem B, and
Theorem C is proved in Section 4.
Finally, in Section 5,
we discuss isomers of generalized cuspidal edges
and also the connection to curved foldings.

\section{Pointwise properness for continuous maps}

There seems to be no explicit definition of
local properness of maps, not only in \cite{Z} but also
in other references as far as the authors know. 
So, in this section, we 
discuss the pointwise properness mentioned in the introduction.

Let $X, X_1$ and $X_2$ be 
locally connected
and
locally compact Hausdorff spaces.
We also fix a metric space $(Y,d_Y)$ 
all of whose closed bounded sets are compact
(e.g., the Euclidean space or a complete Riemannian manifold).
We fix 
a point $p\in X$ 
with its open neighborhood $U$.
For each $r(>0)$,
we denote by $B_Y(P,r)$ the open ball of
radius $r$ centered at $P(\in Y)$ and by $\overline{B_Y(P,r)}$
its closure.

\begin{Def}\label{def:P1}
A continuous map $f:X\to Y$ is said to be 
{\it $U$-proper at a point $p\in X$}
if there exists $r>0$
such that $(f|_U)^{-1}(\overline{B_Y(P,r)})$ is a compact 
subset of $U$.
Moreover, $f$ is said to be 
{\it strongly $U$-proper at $p$},
if
for each neighborhood $V(\subset U)$ of $p$,
there exists $r>0$ such that
$(f|_U)^{-1}(\overline{B_Y(P,r)})$ is a compact 
subset of $V$.
\end{Def}

We then give the following definition:

\begin{Def}\label{def:P2}
A continuous map $f:X\to Y$ is said to be 
{\it proper at a point $p\in X$}
if there exists a neighborhood $U$ of $p$
such that $f$ is strongly $U$-proper at $p$.  
\end{Def}

The following assertion implies that 
our pointwise properness 
defined in Definition \ref{def:P2}
can be considered as 
a property of map germs:

\begin{Proposition}\label{prop:restriction}
Suppose that $f:X\to Y$ is a strongly $U$-proper map 
at $p\in U$.
Then for each neighborhood $V(\subset U)$ of $p$,
$f$ is strongly $V$-proper at $p$.
\end{Proposition}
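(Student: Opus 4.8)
\medskip\noindent
\textbf{Proof proposal.}
The plan is to unwind Definition~\ref{def:P1} and exploit the following point: when strong $U$-properness is applied to a \emph{shrunk} neighborhood of $p$, the resulting compact target set already has its $f|_U$-preimage contained in that shrunk neighborhood, so restricting further from $U$ down to $V$ cannot enlarge the preimage. I work with $V\subset U$, the case relevant to the germ-theoretic statement of the introduction (for a neighborhood $V\not\subset U$ the conclusion can genuinely fail, since a preimage of $f(p)$ lying in $V\setminus U$ is invisible to $f|_U$ but not to $f|_V$).

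Concretely, let $V\subset U$ be a neighborhood of $p$, and fix an arbitrary neighborhood $W\subset V$ of $p$; by Definition~\ref{def:P1} it suffices to produce a compact $K\subset Y$ with $f(p)\in\op{int}K$ such that $(f|_V)^{-1}(K)$ is a compact subset of $W$. Since $W\subset V\subset U$, the set $W$ is itself a neighborhood of $p$ lying in $U$, so strong $U$-properness of $f$ at $p$ --- applied with $W$ playing the role of ``$V$'' in Definition~\ref{def:P1} --- furnishes a compact $K\subset Y$ with $f(p)$ an interior point of $K$ and $(f|_U)^{-1}(K)$ a compact subset of $W$. The key step is then the identity
\[
(f|_V)^{-1}(K)=V\cap f^{-1}(K)=(V\cap U)\cap f^{-1}(K)=V\cap(f|_U)^{-1}(K)=(f|_U)^{-1}(K),
\]
where the third equality uses $V\subset U$ and the last one uses $(f|_U)^{-1}(K)\subset W\subset V$. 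Thus $(f|_V)^{-1}(K)$ equals the compact subset $(f|_U)^{-1}(K)$ of $W$, and $f(p)\in\op{int}K$; since $W\subset V$ was arbitrary, $f$ is strongly $V$-proper at $p$.

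I expect no real obstacle here: the entire content is the set-theoretic identity above, and the only thing requiring care is to feed the hypothesis the small neighborhood $W$ rather than $V$ itself, so that the preimage is trapped inside $W$ before one restricts from $U$ to $V$. If one prefers to route everything through the two lemmas preceding the Proposition, the same argument runs with $K=\overline{B_Y(f(p),r)}$: the second of those lemmas gives, for each $W\subset V$, an $r_0>0$ with $(f|_U)^{-1}(\overline{B_Y(f(p),r)})$ compact in $W$ for all $r\in(0,r_0]$ (monotonicity in $r$ as in the proof of the first lemma), the displayed computation identifies these sets with $(f|_V)^{-1}(\overline{B_Y(f(p),r)})$, and reading the same lemma in reverse for $V$ yields the conclusion.
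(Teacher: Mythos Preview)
Your proof is correct and follows essentially the same route as the paper: the paper reduces the Proposition to Lemma~\ref{lem:restriction}, whose content is exactly your displayed set identity $(f|_V)^{-1}(K)=(f|_U)^{-1}(K)$ once $(f|_U)^{-1}(K)\subset W\subset V$, just specialized to $K=\overline{B_Y(f(p),r)}$ via Lemma~1.4. Your direct argument with a general compact $K$ and your alternative ball-based sketch are both faithful to the paper's reasoning, and your explicit restriction to $V\subset U$ matches the paper's standing hypothesis in Lemma~\ref{lem:restriction}.
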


It is sufficient to show the following assertion:

\begin{Lemma}\label{lem:restriction}
Suppose that $f:X\to Y$ is a strongly $U$-proper map at $p\in U$.
Then, for each neighborhood $V(\subset U)$ of $p$,
there exists $r_V(>0)$ such that
\begin{align*}
& (f|_U)^{-1}(B_Y(f(p),r))=(f|_V)^{-1}(B_Y(f(p),r)), \\
&(f|_U)^{-1}(\overline{B_Y(f(p),r)})=(f|_V)^{-1}(\overline{
B_Y(f(p),r)})
\qquad (r\in (0,r_V]).
\end{align*}
\end{Lemma}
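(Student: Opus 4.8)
The plan is to observe that one of the two inclusions in each of the displayed equalities is automatic, and that the other one is precisely what \emph{strong $U$-properness at $p$} delivers once the definition is applied to the neighborhood $V$ itself. Indeed, since $V\subset U$, for every subset $A\subset Y$ we have $(f|_V)^{-1}(A)=V\cap (f|_U)^{-1}(A)\subset (f|_U)^{-1}(A)$. Hence it suffices to prove the reverse inclusions, i.e.\ that both $(f|_U)^{-1}(B_Y(f(p),r))$ and $(f|_U)^{-1}(\overline{B_Y(f(p),r)})$ are contained in $V$ for all sufficiently small $r>0$.

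First I would apply the hypothesis that $f$ is strongly $U$-proper at $p$ to the given neighborhood $V(\subset U)$ of $p$. By Definition \ref{def:P1} this yields a compact set $K_V\subset Y$ such that $f(p)$ is an interior point of $K_V$ and $(f|_U)^{-1}(K_V)$ is a compact subset of $V$. Since $f(p)$ lies in the open set $\op{int}K_V$ and $d_Y$ is compatible with the topology of $Y$, there exists $\rho>0$ with $B_Y(f(p),\rho)\subset \op{int}K_V$; setting $r_0:=\rho/2$ we obtain
\[
\overline{B_Y(f(p),r_0)}\subset \{y\in Y\,;\,d_Y(y,f(p))\le r_0\}\subset B_Y(f(p),\rho)\subset K_V .
\]

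Now fix any $r\in(0,r_0]$. Monotonicity of preimages under inclusion of sets gives
\[
(f|_U)^{-1}(B_Y(f(p),r))\subset (f|_U)^{-1}(\overline{B_Y(f(p),r)})\subset (f|_U)^{-1}(\overline{B_Y(f(p),r_0)})\subset (f|_U)^{-1}(K_V)\subset V .
\]
Combining these containments with the identity $(f|_V)^{-1}(A)=V\cap (f|_U)^{-1}(A)$ noted above, applied once to $A=B_Y(f(p),r)$ and once to $A=\overline{B_Y(f(p),r)}$, yields the two asserted equalities for all $r\in(0,r_0]$.

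There is essentially no difficult step: the statement is a direct unwinding of Definition \ref{def:P1}. The only point requiring a little care is passing from \lq\lq $f(p)$ is an interior point of $K_V$\rq\rq\ — an assertion naturally phrased with open balls — to a genuinely \emph{closed} ball $\overline{B_Y(f(p),r_0)}$ sitting inside $K_V$; this is exactly why one first selects $\rho$ for the open ball and then halves it. I would also remark that no compactness argument is needed in the proof of the Lemma itself: compactness of $(f|_U)^{-1}(K_V)$ is invoked only afterwards, to conclude from this Lemma that $(f|_V)^{-1}(\overline{B_Y(f(p),r)})$ is compact and hence that $f$ is strongly $V$-proper at $p$, which is the content of Proposition \ref{prop:restriction}.
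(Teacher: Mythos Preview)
Your proof is correct and follows essentially the same route as the paper: show that $(f|_U)^{-1}(\overline{B_Y(f(p),r)})\subset V$ for all small $r$, and note that the reverse inclusions $(f|_V)^{-1}(A)\subset (f|_U)^{-1}(A)$ are automatic. The only cosmetic difference is that the paper invokes the metric-ball characterization of strong $U$-properness established in the lemma immediately preceding this one (so it obtains $r_0$ directly with $(f|_U)^{-1}(\overline{B_Y(f(p),r)})$ compact in $V$), whereas you start from Definition~\ref{def:P1} and extract $r_0$ from the compact set $K_V$ by hand; the content is the same.
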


\begin{proof}
We fix a neighborhood $V(\subset U)$ of $p$.
Since $f$ is strongly $U$-proper at $p$,
there exists $r_V(>0)$ 
such that 
$$
K:=(f|_U)^{-1}(\overline{B_Y(f(p),r)})
\qquad  (r\in (0,r_V])
$$
is a compact subset of $V$.
In particular,
$
O:=(f|_U)^{-1}(B_Y(f(p),r))
$
is also a subset of $V$ for each $r\in (0,r_V]$.
So we have
\begin{equation}\label{eq:K}
O\subset (f|_V)^{-1}(\overline{B_Y(f(p),r)}),\qquad
K\subset (f|_V)^{-1}(\overline{B_Y(f(p),r)}).
\end{equation}
On the other hand, the opposite inclusions
\begin{align*}
& (f|_V)^{-1}(B_Y(f(p),r))\subset (f|_U)^{-1}(B_Y(f(p),r))=O,\\
& (f|_V)^{-1}(\overline{B_Y(f(p),r)})\subset (f|_U)^{-1}(\overline{B_Y(f(p),r)})=K
\end{align*}
are clear.
\end{proof}

Recall that a continuous map $f:X\to Y$ between two topological spaces
$X$ and $Y$ is said to be {\it proper} if for each compact subset 
$K\,(\subset Y)$, the inverse image $f^{-1}(K)$ is compact.

\begin{Example}
We consider a function $f:\R\to \R$ defined by $f(x):=x e^{-x^2}$.
This function itself is not a proper map,
but for each $\epsilon(>0)$,
the restriction of $f$ to $U:=(-\epsilon,\epsilon)$
is strongly $U$-proper at $x=0$.
\end{Example}

\begin{Example}\label{ex:0}
Define a continuous function
$
f:\R\to \R
$
so that $f(x)=x(1-|x|^{-1})$ if $|x|>1$ 
and $f(x)=0$ if $|x|\le 1$. 
Obviously, $f$ is a proper map, but not proper at $x=0$.
In fact, if we set $U:=(-\epsilon,\epsilon)$ $(0<\epsilon<1)$,
then for each $r\in (0,\epsilon)$
$$
(f|_U)^{-1}([-r,r])=(f|_U)^{-1}(\{0\})=U
$$
and so $f$ cannot be  $U$-proper at $x=0$.
This implies that the properness of a continuous map does not imply the
properness of the map at a given point, in general.
\end{Example}

\begin{Example}\label{ex:2}
Consider a continuous function given by
$$
f(x):=x \sin \frac{1}x \qquad (x\in [-1,1]).
$$
Since $[-1,1]$ is compact,
$f$ is a proper map.
Moreover, it is easy to check that $f$ is
$U$-proper at $x=0$ for each choice of 
an open interval $U:=(-\epsilon,\epsilon)$ $(0<\epsilon<1)$.
However, $f$ is not strongly $U$-proper at $x=0$.
In fact, we set 
$$
V_k:=(-a_k,a_k),\qquad a_k:=\frac{1}{k\pi},
$$
where $k$ is a positive integer satisfying $\pm a_k\in U$.
Then we have $V_k\subset U$.
We fix such a $V_k$ and set $K_r:=[-r,r]$ ($r>0$).
Since $0$ is
an interior point of $f^{-1}(K_r)$ 
and $f(\pm a_k)=0$,
there exists $\delta(>0)$ depending on $r$
satisfying $f((a_k-\delta,a_k))\subset K_r$,
which implies that $(f|_{V_k})^{-1}(K_r)$ cannot be
a compact subset of $V_k$.
\end{Example}
We prepare the following:

\begin{Lemma}\label{prop:key000}
Let $f:X\to Y$ be a continuous map and $U$ a 
neighborhood of $p\in X$.
Suppose that $f$ is $U$-proper at $p$.  
If $(f|_U)^{-1}(f(p))=\{p\}$, then $f$ is 
strongly $U$-proper at $p$.
\end{Lemma}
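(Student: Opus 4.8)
The plan is to translate everything into the ball formulation supplied by the two equivalence lemmas above and then run a nested‑compact‑sets (finite intersection property) argument, being careful \emph{not} to use sequences.

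First I would set $P:=f(p)$. Since $f$ is $U$-proper at $p$, the first equivalence lemma above (condition (3)) provides an $r_0>0$ such that
$C_s:=(f|_U)^{-1}\bigl(\overline{B_Y(P,s)}\bigr)$
is a compact subset of $U$ for every $s\in(0,r_0]$. On the other side, by the characterization of strong $U$-properness in terms of balls, it suffices to prove the following: for each open neighborhood $V(\subset U)$ of $p$ there is an $r\in(0,r_0]$ with $C_r\subset V$. Indeed, once $C_r\subset V$ and $r\le r_0$, the set $C_r$ is automatically a compact subset of $V$, which is exactly what the second equivalence lemma requires (with $K_V=\overline{B_Y(P,r)}$).

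Now fix such a $V$ and suppose, for contradiction, that $C_s\not\subset V$ for every $s\in(0,r_0]$. Put $A_s:=C_s\setminus V=C_s\cap(X\setminus V)$. Each $A_s$ is closed in the compact set $C_{r_0}$ (as the intersection of the closed set $C_s$ with the closed set $X\setminus V$), hence compact; by the contradiction hypothesis each $A_s$ is nonempty; and $A_s\subset A_{s'}$ whenever $s\le s'$, so the family $\{A_{r_0/n}\}_{n\ge 1}$ is a decreasing family of nonempty compact sets and therefore has the finite intersection property. Compactness of $C_{r_0}$ then yields a point $x\in\bigcap_{n\ge 1}A_{r_0/n}$. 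For this $x$ we have $d_Y(f(x),P)\le r_0/n$ for all $n$, hence $f(x)=P=f(p)$, so $x\in(f|_U)^{-1}(f(p))=\{p\}$, i.e.\ $x=p$. But $x\in X\setminus V$ while $p\in V$, a contradiction. Hence the required $r$ exists, and $f$ is strongly $U$-proper at $p$.

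The main obstacle — and essentially the only non‑routine point — is resisting the natural first attempt, namely taking a sequence $x_n\in C_{1/n}\setminus V$ with $f(x_n)\to P$ and extracting a convergent subsequence inside the compact set $C_{r_0}$ to locate the "limit" in $(f|_U)^{-1}(f(p))$: this is not legitimate here because $X$ is only assumed locally compact Hausdorff, so $C_{r_0}$ need not be sequentially compact. Replacing the sequence by the nested family $\{A_s\}$ and invoking the finite intersection property repairs this, and the rest of the argument (closedness of $A_s$, the computation $d_Y(f(x),P)=0$, and the clash $p\in V$, $x\notin V$) is immediate.
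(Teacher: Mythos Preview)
Your proof is correct and follows the same overall strategy as the paper's: assume for contradiction that no $C_r$ lies in $V$, use compactness of $C_{r_0}$ to produce a point outside $V$ that maps to $P$, and conclude that this point must equal $p\in V$, a contradiction.

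The only real difference is in how that point is produced. The paper does precisely what you warn against: it chooses $q_k\in K_{1/k}\setminus V$ and, using compactness of $K_{r_0}$, takes an accumulation point $q_\infty$. Your objection is slightly overstated, though: the paper does not extract a convergent subsequence but only an \emph{accumulation point} (cluster point) of the sequence, and every sequence in a compact space has one, with no countability hypothesis on $X$; continuity of $f$ together with the Hausdorff property of $Y$ then forces $f(q_\infty)=P$, and closedness of $K_{r_0}\setminus V$ gives $q_\infty\notin V$. So the paper's sequence argument is in fact legitimate in the stated generality. Your nested-compact-sets argument is a bit cleaner in that it avoids this subtlety altogether and makes the role of compactness completely transparent; the paper's version is marginally shorter. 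Substantively the two arguments are the same.
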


\begin{proof}
We fix a neighborhood $V(\subset U)$ of $p$.
Since
$f$ is $U$-proper at $p$,
there exists $r_0(>0)$ 
such that   
$$
K_r:=(f|_U)^{-1}(\overline{B_Y(f(p),r)})
$$ is a compact 
subset of $U$ for each $r\in (0,r_0]$.
It is sufficient to show that
$K_r$ is contained in $V$ for sufficiently small $r$.
If this fails, then, for each 
positive integer $k$ satisfying $1/k<r_0$,
there exists $q_k\in (f|_U)^{-1}(\overline{B_Y(f(p),1/k)})(=K_{r_0})$
which is not belonging to $V$.
By our construction of the sequence
$\{q_k\}_{k=1}^\infty$, it consists of infinitely many 
points.
Since $K_{r_0}$ is compact,
the sequence $\{q_k\}_{k=1}^\infty$ 
has an accumulation point $q_\infty\in K_{r_0}$. 
Since
$
f(q_k)\in \overline{B_Y(f(p),1/k)},
$
we have
$f(q_\infty)=f(p)$. 
Since $(f|_U)^{-1}(f(p))$ is the single point set $\{p\}$,
we can conclude $q_\infty=p$.
On the other hand,
since $q_k\in K_{r_0}\setminus V$,
we have $q_\infty\in K_{r_0}\setminus V$,
contradicting the fact $q_\infty=p$.
\end{proof}

\begin{Corollary}\label{eq:C}
Let $f:X\to Y$ be a proper map.
If $f^{-1}(f(p))=\{p\}$ holds, then
$f$ is strongly $U$-proper for each open neighborhood $U$ of $p$.
\end{Corollary}

\begin{proof}
In the setting of Lemma \ref{prop:key000},
we put $U:=X$.
Since $f$ is a proper map,
$f$ is $X$-proper at $p$.  
Since $f^{-1}(f(p))=\{p\}$ holds, 
$f$ is strongly $X$-proper at $p$.
By
Proposition \ref{prop:restriction},
$f$ is strongly $U$-proper for any open neighborhood $U$ of $p$.
\end{proof}

We next
prove the following:

\begin{Lemma}\label{prop:key}
Let $f:X\to Y$ be a continuous map and
$U$ a neighborhood of a point $p\in X$. 
If $f$  is strongly $U$-proper at $p$, then
$(f|_U)^{-1}(f(p))$ coincides with $\{p\}$.
\end{Lemma}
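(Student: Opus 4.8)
The plan is to argue by contradiction, exploiting the asymmetry built into the definition of strong $U$-properness: the compact set $K_V$ is permitted to depend on the neighborhood $V$, but once chosen it forces $(f|_U)^{-1}(K_V)$ to lie inside $V$. So shrinking $V$ around $p$ squeezes the preimage of a neighborhood of $f(p)$ down toward $p$, and any other point of the fiber over $f(p)$ cannot survive this squeezing.

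Concretely, I would first observe that $p \in (f|_U)^{-1}(f(p))$ trivially, so only the reverse inclusion needs proof. Suppose, for contradiction, that there is a point $q \in (f|_U)^{-1}(f(p))$ with $q \neq p$; in particular $q \in U$ and $f(q) = f(p)$. Since $X$ is a locally compact Hausdorff space it is $T_1$, hence there is an open neighborhood $V \subseteq U$ of $p$ with $q \notin V$. Now apply the hypothesis that $f$ is strongly $U$-proper at $p$ to this particular $V$: there is a compact set $K_V \subseteq Y$ having $f(p)$ as an interior point (so in particular $f(p) \in K_V$) such that $(f|_U)^{-1}(K_V)$ is a compact subset of $V$.

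The contradiction is then immediate: $f(q) = f(p) \in K_V$ and $q \in U$, so $q \in (f|_U)^{-1}(K_V) \subseteq V$, contradicting $q \notin V$. Therefore no such $q$ exists and $(f|_U)^{-1}(f(p)) = \{p\}$.

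I do not expect a genuine obstacle here; the argument is essentially a single line once the definitions are unfolded. The only step needing a word of care is the separation of $p$ from $q$ by a neighborhood contained in $U$, which uses the Hausdorff (indeed merely $T_1$) hypothesis on $X$ imposed at the beginning of this section, together with the fact that $U$ is itself a neighborhood of $p$ so that intersecting a separating neighborhood with $U$ keeps us inside $U$. This lemma is of course the converse to Lemma \ref{prop:key000}, and together they give the equivalence cited as condition (a2) of Theorem~A.
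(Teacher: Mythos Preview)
Your proof is correct and follows essentially the same route as the paper: assume a second point $q$ in the fiber, separate $p$ from $q$ using the Hausdorff hypothesis, then invoke strong $U$-properness to trap the preimage of a small set containing $f(p)$ inside the chosen neighborhood of $p$, contradicting $q\notin V$. The only cosmetic differences are that the paper uses two disjoint neighborhoods $V_1\ni p$, $V_2\ni q$ and the equivalent ball formulation $g^{-1}(\overline{B_Y(f(p),\epsilon)})\subset V_1$, whereas you use a single $T_1$-separating neighborhood and the compact set $K_V$ directly from the definition.
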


\begin{proof}
We set $g:=f|_U$ and suppose that
$g^{-1}(f(p))$ contains a point $q\in U$ other than $p$.
Since $X$ is a Hausdorff space,
there exists a pair $(V_1,V_2)$ of disjoint open subsets
such that $p\in V_1$ and $q\in V_2$.

Since $f$ is strongly $U$-proper at $p$,
there exists $\epsilon(>0)$
such that  $g^{-1}(\overline{B_Y(f(p),\epsilon)})\subset V_1$.
Then we have that
$$
q\in g^{-1}(f(p))\subset g^{-1}(\overline{B_Y(f(p),\epsilon)})
\subset V_1,
$$
contradicting the fact that $q\in V_2$.
\end{proof}

We next prepare the
following assertion, which is 
a generalization of
the proposition given in \cite{KRSUY}.

\begin{Proposition}\label{Lem:krsuy}
Let $f:(X,p)\to (Y,P)$ be a continuous map
such that $f^{-1}(P)$ is a finite point set.
Let $U$ be a neighborhood of $p$.
Then there exists $\delta(>0)$ such that
the connected component $V_r$ of $f^{-1}(B_Y(P,r))$
containing $p$ satisfies $\overline{V_r}\subset U$
for each $r\in (0,\delta]$. 
Moreover, $V_r$ is a relatively compact 
 open neighborhood of $p$ and
$f$ is $V_r$-proper at $p$.
\end{Proposition}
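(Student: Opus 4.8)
The plan is to reduce everything to one well-chosen relatively compact neighborhood of $p$ and then argue purely topologically, the main point being that connected components of open subsets of $X$ need not be open (so I must avoid invoking local connectedness of $X$).

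First I would localize. Since $f^{-1}(P)$ is finite, write $f^{-1}(P)=\{p,q_2,\dots,q_m\}$; using that $X$ is Hausdorff, pick an open $A\ni p$ with $A\cap\{q_2,\dots,q_m\}=\emptyset$ (take $A=X$ if $m=1$), and then use local compactness (hence regularity) of $X$ to choose an open neighborhood $U_0$ of $p$ with $\overline{U_0}$ compact and $\overline{U_0}\subset A\cap U$; thus $f^{-1}(P)\cap\overline{U_0}=\{p\}$. Next I would fix $\delta$: on the compact boundary $\partial U_0=\overline{U_0}\setminus U_0$ the continuous function $x\mapsto d_Y(f(x),P)$ is strictly positive, since $p\notin\partial U_0$, so it attains a positive minimum $2\delta_0>0$; also choose $\delta_1>0$ with $\overline{B_Y(P,\delta_1)}$ compact (local compactness of $Y$), and set $\delta:=\min(\delta_0,\delta_1)>0$. (If $\partial U_0=\emptyset$, i.e. $U_0$ is clopen, just take $\delta:=\delta_1$; the rest goes through unchanged.)

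Now fix $r\in(0,\delta]$, put $W_r:=f^{-1}(B_Y(P,r))$, which is open and contains $p$ since $f(p)=P$, and let $V_r$ be the connected component of $W_r$ containing $p$. Because $d_Y(f(\cdot),P)\ge 2\delta_0>r$ on $\partial U_0$, we have $W_r\cap\partial U_0=\emptyset$, hence $V_r\cap\partial U_0=\emptyset$. Then the connectedness of $V_r$ forces $V_r\subset\overline{U_0}$, for otherwise $V_r\cap U_0$ and $V_r\setminus\overline{U_0}$ would be a separation of $V_r$; and since $V_r$ misses $\partial U_0$, this gives $V_r\subset U_0$. Thus $p\in V_r$ and $\overline{V_r}\subset\overline{U_0}\subset U$, which is the first assertion, and $\overline{V_r}$ is compact, being a closed subset of the compact set $\overline{U_0}$.

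For the $V_r$-properness I would take $K:=\overline{B_Y(P,r/2)}$, which is compact (a closed subset of $\overline{B_Y(P,\delta_1)}$) and has $P=f(p)$ in its interior; it remains to check that $(f|_{V_r})^{-1}(K)=V_r\cap f^{-1}(K)$ is a compact subset of $V_r$. Here the only subtlety is that $V_r$ need not be open, which I would circumvent by noting $V_r\cap f^{-1}(K)=\overline{V_r}\cap f^{-1}(K)$: if $x\in\overline{V_r}$ and $d_Y(f(x),P)\le r/2<r$, then $x\in W_r$, so $V_r\cup\{x\}$ is a connected subset of $W_r$ containing $p$, hence lies in the component $V_r$ and $x\in V_r$. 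Consequently $V_r\cap f^{-1}(K)$ is closed in the compact set $\overline{V_r}$, hence compact, and is trivially contained in $V_r$, so Definition \ref{def:P1} applies. This last maneuver — staying clear of local connectedness of $X$, using only that a component is a maximal connected set and that $A\cup\{x\}$ is connected whenever $A$ is connected and $x\in\overline{A}$ — is the main thing to be careful about; in the intended applications $X$ is an open subset of $\R^n$, where $V_r$ is genuinely open and the issue does not arise.
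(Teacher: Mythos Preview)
Your argument is correct and rests on the same two pillars as the paper's: choose a relatively compact neighborhood $U_0$ with $f^{-1}(P)\cap\overline{U_0}=\{p\}$, then exploit connectedness of $V_r$ and compactness of $\overline{V_r}$. The implementations differ in a way worth noting. The paper finds $\delta$ by contradiction---assuming $\overline{V_{1/k}}\not\subset W$ for all $k$, producing points $p_k\in\overline{V_{1/k}}\cap\partial W$, and accumulating to a preimage of $P$ on $\partial W$---and proves $V_r$-properness by a second sequential contradiction (a non-compact $g^{-1}(K)$ yields a limit point $x_\infty\in\partial V_r$, which is then absorbed into $V_r$ via a connected open neighborhood $O$ with $f(O)\subset B_Y(P,r)$). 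You instead read off $\delta$ directly as (half) the minimum of $d_Y(f(\cdot),P)$ on the compact boundary $\partial U_0$, and establish properness for the explicit witness $K=\overline{B_Y(P,r/2)}$ via the identity $V_r\cap f^{-1}(K)=\overline{V_r}\cap f^{-1}(K)$, avoiding sequences entirely. Your route is more constructive and slightly shorter; the paper's route, on the other hand, yields the marginally stronger conclusion that $(f|_{V_r})^{-1}(K)$ is compact for \emph{every} compact $K\subset B_Y(P,r)$, not just one. Your attention to the possibility that $V_r$ fails to be open when $X$ is not locally connected is appropriate and correctly handled.
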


\begin{proof}
The case that $X:=U(\subset \R^n)$ is
discussed in \cite{KRSUY}.
We need a few modification 
to prove this assertion:
Since $X$ is
locally connected,
$V_r$ is an open neighborhood of $p$. Moreover,
since $X$ is a locally compact Hausdorff space, 
we can take a relatively compact 
neighborhood $W$ of
$p$ such that $\overline{W}$ is contained in $U$.
Since $f^{-1}(P)$ is a finite point set,
we  may assume that
\begin{equation}\label{eq:W}
f^{-1}(P)\cap W=\{p\}
\end{equation}
holds. As in the statement of the proposition,
we let $V_r$ be the connected component of 
$f^{-1}(B_{Y}(P,r))$ containing $p$ for each $r>0$.
Since $V_s\subset V_r$ for $s<r$, it
is sufficient to show that
$\overline{V_{1/k}}\subset W$  
for a sufficiently large integer $k>0$.
If not, there exists a point not in $W$ but lying 
in $\overline{V_{1/k}}$.
Amongst them, we would like to show
the existence of a point $p_k\in \partial W\cap \overline{V_{1/k}}$.
If not, we have the following 
decomposition
$$
\overline{V_{1/k}}=\Big(W\cap \overline{V_{1/k}}\Big)\cup
\Big((X\setminus \overline{W}) 
\cap \overline{V_{1/k}}\Big).
$$
The right-hand side is the union of
two non-empty open subsets of $\overline{V_{1/k}}$,
which is a contradiction, because
$\overline{V_{1/k}}$ is connected.
Thus, we can find such a point $p_k\in \overline{V_{1/k}}
\cap \partial W$ for each $k$.
Since $f$ is continuous,
we have
$$
f(\overline{V_{1/k}})\subset \overline{f(V_{1/k})}
\subset \overline{B_Y(P,1/k)}.
$$
By our construction, 
the sequence $\{p_{k}\}_{k=1}^\infty$ 
consists of infinitely many points,
and has an accumulation point $p_\infty\in \partial W$, because $\partial W$ is compact.
Since $p_k\in \overline{V_{1/k}}$,
$f(p_k)$ belongs to $B_Y(P,1/k)$.
In particular,
$\{f(p_k)\}_{k=1}^\infty$ converges to the point $P$,
and so $f(p_\infty)=P$ holds,
which contradicts 
\eqref{eq:W}. So we have shown that
$\overline{V_{1/k}}\subset W$  
for a sufficiently large integer $k>0$.
Since $\overline{W}$ is compact,
$\overline{V_{1/k}}$ is also compact.

We fix such an integer $k$ and set $r_0:=1/k$
and now prove that
$f$ is $V_r$-proper at $p$ under the assumption that $r<r_0$.
Since $V_r$ is a subset of $V_{r_0}$,
its closure $\overline{V_{r}}(\subset U)$ is compact.
We set $g:=f|_{V_{r}}$ and let $K$ be a compact subset of $B_Y(P,r)$.
Suppose that $g^{-1}(K)$ is not compact.
Then there exists a sequence $\{x_k\}_{k=1}^\infty$ in 
$g^{-1}(K)$ which does not accumulate 
to any point in $V_{r}$. 
Since $\overline{V_{r}}$ is compact,
$\{x_k\}_{k=1}^\infty$ 
must have an accumulation point 
$x_\infty\in \partial V_{r}$.
Since $f(x_k)\in K$,
we have
$$
f(x_\infty)\in K\subset B_Y(P,r).
$$
In particular, there exists a neighborhood $O$ of $x_\infty$
such that $f(O)\subset B_Y(P,r)$, which implies
$$
f(V_{r}\cup O)\subset B_Y(P,r).
$$
Since $x_\infty\in V_{r}\cap O$,
the union $V_{r}\cup O$ is connected.
Since $V_{r}$
is a connected component of $(f|_U)^{-1}(B_Y(P,r))$,
we have $V_{r}\cup O=V_{r}$,
contradicting the fact that $x_\infty\in \partial V_{r}$.
Thus, $g^{-1}(K)$ is a compact subset of $V_{r}$.
\end{proof}

\begin{Theorem}\label{prop:I1}
Let $f:X\to Y$ be a continuous map.
Then the following three conditions are equivalent:
\begin{enumerate}
\item The map $f$ is proper at $p$.
\item There exists a neighborhood $U$ of $p$ such that
$(f|_U)^{-1}(f(p))$
is a finite point set.
\item There exists a neighborhood $U$ of $p$ such that
$(f|_U)^{-1}(f(p))=\{p\}$, and $f$ is $U$-proper at $p$. 
\end{enumerate}
In particular, $(2)$ can be considered as
a useful criterion for the pointwise properness of 
continuous maps.
 \end{Theorem}

\begin{proof}
By Lemma \ref{prop:key}, (1) implies (2).
We set $g:=f|_U$.
Since
$g^{-1}(f(p))$
is a finite point set,
we may assume that $g^{-1}(g(p))=\{p\}$.
On the other hand,
by Proposition \ref{Lem:krsuy}, (2) implies that
$f$ is $U$-proper at $p$ for a sufficiently small
neighborhood $U$ of $p$.
So (3) is obtained.
Finally, (3) implies (1) by Lemma 
\ref{prop:key}.
\end{proof}

\begin{Corollary}\label{cor:Im}
Let $U$ be a non-empty open subset of $\R^n$
and $f:U(\subset \R^n)\to \R^N$ $(n\le N)$
a $C^r$-immersion. 
Then $f$ is proper at each point of $U$.
\end{Corollary}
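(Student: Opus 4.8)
The plan is to verify condition (2) of Proposition \ref{prop:I1}: it suffices to produce, for each $p\in U$, a neighborhood $W$ of $p$ on which $f^{-1}(f(p))\cap W=\{p\}$, since then $f$ is proper at $p$. An immersion is locally injective, so this is essentially immediate from the inverse/implicit function theorem, but I would phrase it cleanly.

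First I would recall that since $f$ is a $C^r$-immersion at $p$, the differential $df_p\colon\R^n\to\R^N$ is injective. By the rank theorem (or simply by completing $df_p(\R^n)$ to a basis of $\R^N$ and applying the inverse function theorem to the composition of $f$ with the projection onto an $n$-dimensional coordinate subspace transverse to the remaining directions), there is an open neighborhood $W\subset U$ of $p$ on which $f|_W$ is a $C^r$-embedding; in particular $f|_W$ is injective. Consequently $(f|_W)^{-1}(f(p))=\{p\}$, which is a finite point set. By Proposition \ref{prop:I1}, $f$ is proper at $p$. Since $p\in U$ was arbitrary, $f$ is proper at every point of $U$.

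There is no real obstacle here; the only point deserving a word of care is that "proper at $p$" is a statement about the germ of $f$ at $p$ (Proposition \ref{prop:restriction}), so it does not matter that $f$ itself need not be injective or proper on all of $U$ — the local embedding property is all that is used, and the local coordinate system on which $f$ is a graph is exactly the kind of neighborhood that feeds into Proposition \ref{Lem:krsuy} and hence into Proposition \ref{prop:I1}. One could also argue directly: choosing $W$ relatively compact with $\overline{W}\subset U$ and $f|_{\overline W}$ injective, the restriction $f|_{\overline W}$ is a continuous injection from a compact set, hence a homeomorphism onto its image, so preimages of compact sets are compact and $f$ is $W$-proper at $p$ with $(f|_W)^{-1}(f(p))=\{p\}$; Lemma \ref{prop:key000} then upgrades this to strong $W$-properness, giving properness at $p$ directly from Definition \ref{def:P2}.
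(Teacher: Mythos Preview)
Your proof is correct and follows essentially the same approach as the paper: the paper's proof simply observes that an immersion is locally injective and invokes Proposition~\ref{prop:I1}. Your version supplies the standard rank-theorem justification for local injectivity and adds an optional direct route via Lemma~\ref{prop:key000}, but the core idea is identical.
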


\begin{proof}
Since $f$ is an immersion, it is locally injective.
So we obtain the assertion.
\end{proof}

The standard cuspidal edge,  the standard swallowtail, 
the standard cuspidal cross cap  and the standard cross cap
(see Figure 1) are defined by
\begin{align}\label{eq:std}
& f_C(u,v)=(v^2,v^3,u), \qquad f_S(u,v)=(3v^4+uv^2, 4v^3+2uv, u), \\
\nonumber
& f_{CW}(u,v)=(v^2,uv^3,u), \qquad f_{W}(u,v)=(uv,v^2,u)
\end{align}
as maps from $\R^2$ into $\R^3$, respectively.
Using these expressions, we can prove the following:

\begin{Proposition}\label{prop:Im}
The standard cuspidal edge $f_C$, 
the standard swallowtail $f_S$,
the standard cuspidal cross cap $f_{CW}$
and the standard cross cap $f_{CW}$
are $U$-proper at their singular point $o$ {\rm (which is the
origin of the domain)}
for any choice 
of an open neighborhood $U$ of $o$. 
Moreover,
$f^{-1}(f(o))=\{o\}$ holds.
\end{Proposition}

\begin{proof}
The property that
$f^{-1}(f(o))=\{o\}$ is obvious.
By Corollary \ref{eq:C},
it is sufficient to show that
$f_C$, $f_S$,
$f_{CW}$
and $f_{C}$ are  proper maps on $\R^2$.
Here we only show that $f_S$
is a proper map. (The properness of
the other maps can be proved using the same argument.)
We let $K$ be a compact subset of $\R^3$,
and let $\{(u_k,v_k)\}_{k=1}^\infty$ be a sequence in
$f_S^{-1}(K)$.
We set $f_S(u_k,v_k)=(a_k,b_k,c_k)$, then
$\{a_k\}_{k=1}^\infty$, $\{b_k\}_{k=1}^\infty$
and $\{c_k\}_{k=1}^\infty$ are bounded sequence in $\R$
because of the compactness of $K$.
Then
$\{v_k\}_{k=1}^\infty$ is bounded,
because $v_k=c_k$. 
Moreover,
the first component
of $f_S(u_k,v_k)$ 
satisfies $3u_k^4+u_k^2v_k=a_k$,
that is $u_k$ is a solution of 
the equation 
$3t^4+c_k t^2=a_k$.
Since $c_k,a_k$ are bounded,
we can conclude that
$\{u_k\}_{k=1}^\infty$ is also bounded.
Thus $\{(u_k,v_k)\}_{k=1}^\infty$ contains 
a convergent subsequence. So
$f_S^{-1}(K)$ is compact.
\end{proof}

\begin{Corollary}\label{cor:SW}
A $C^r$-map $f:U\to \R^3$
which has a  cuspidal edge, 
a swallowtail,  a cuspidal cross cap or a cross cap
singularity at $p$ is proper at $p$. 
\end{Corollary}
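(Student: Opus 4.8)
The plan is to reduce Corollary \ref{cor:SW} to Proposition \ref{prop:I1}, specifically to condition (2): it suffices to exhibit, for a map germ with one of these four singularities at $p$, a neighborhood $U$ of $p$ on which $f^{-1}(f(p)) = \{p\}$. By definition, having a cuspidal edge (resp. swallowtail, cuspidal cross cap, cross cap) at $p$ means $f$ is right-left equivalent as a map germ to $f_C$ (resp. $f_S$, $f_{CW}$, $f_W$) of \eqref{eq:std} at the origin. Right-left equivalence preserves the property ``the fiber of the base point is a single point germ'': if $\Phi\circ f = g\circ\varphi$ with $\Phi$ a diffeomorphism germ of the target and $\varphi$ a diffeomorphism germ of the source, then $f^{-1}(f(p))$ and $g^{-1}(g(q))$ agree as set germs up to the diffeomorphism $\varphi$. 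So the whole matter comes down to checking the four normal forms.

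First I would dispatch the cross cap and cuspidal cross cap together, since both have the coordinate $u$ appearing linearly as a component: for $f_W(u,v)=(uv,v^2,u)$, the third coordinate equation forces $u=0$, and then the second forces $v=0$; for $f_{CW}(u,v)=(v^2,uv^3,u)$, likewise the third coordinate forces $u=0$ and the first forces $v=0$. Hence $f_W^{-1}(\vect{0})=f_{CW}^{-1}(\vect{0})=\{(0,0)\}$ on all of $\R^2$, with no shrinking needed. For the cuspidal edge $f_C(u,v)=(v^2,v^3,u)$ the same trick applies verbatim: the third coordinate gives $u=0$, the first gives $v=0$. The swallowtail $f_S(u,v)=(3v^4+uv^2,\,4v^3+2uv,\,u)$ needs one extra line: the third coordinate gives $u=0$, and then the first two components reduce to $3v^4=0$ and $4v^3=0$, each of which forces $v=0$.

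Thus in every one of the four cases $f^{-1}(f(p))=\{p\}$ holds on a neighborhood (indeed globally for the normal forms), so condition (2) of Proposition \ref{prop:I1} is satisfied, and that proposition yields that $f$ is proper at $p$. I do not expect any genuine obstacle here: the only point requiring a word of care is the observation that right-left equivalence of map germs transports the ``finite fiber over the base point'' condition, but this is immediate since a diffeomorphism of the source carries the fiber of one germ bijectively onto the fiber of the other and a diffeomorphism of the target fixes which points map to the base point. If one wants to avoid even invoking right-left equivalence abstractly, one can instead note that Corollary \ref{cor:Im} already handles the regular points, and the singular set of each of these germs is a curve (or the origin) along which the above computation shows the $f(p)$-fiber is still a single point; but the cleanest route is simply to cite Proposition \ref{prop:I1}(2) after the four-line normal-form check above.
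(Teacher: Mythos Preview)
Your proposal is correct and follows essentially the same approach as the paper: reduce to Proposition~\ref{prop:I1}(2) and verify that each of the four standard forms in \eqref{eq:std} has $f^{-1}(\vect{0})=\{(0,0)\}$, noting that this property is preserved under right-left equivalence. The paper's own proof is terser (it simply declares the fiber condition ``obvious because the standard maps \dots\ have such a property''), whereas you spell out the one-line computation for each normal form and make explicit why right-left equivalence transports the finite-fiber condition; but the underlying argument is the same.
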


\begin{proof}
It is sufficient to show that
$(f|_V)^{-1}(f(p))=\{p\}$ for a sufficiently small
neighborhood $V(\subset U)$ of $p$.
However, this is obvious because 
the standard maps for cuspidal edges, 
swallowtails,  cuspidal cross caps
and cross caps have such a property.
\end{proof}

We next prove the following, 
which will be applied to prove Theorem A:

\begin{Theorem}\label{lem:I} 
Let $f_i:(X_i,p_i)\to (Y,P)$ $(i=1,2)$ be  
continuous maps $($in particular, 
$P:=f_1(p_1)=f_2(p_2))$.
Suppose that  $f_2$
is $U_2$-proper at $p_2$ and $f_2^{-1}(f_2(p_2))=\{p_2\}$.
Then the following three conditions are equivalent:
\begin{enumerate}
\item There exists a neighborhood $V_i(\subset U_i)$ of $p_i$
for each $i=1,2$ such that $f_1(V_1)\subset f_2(V_2)$.
\item There exist $r>0$ and a neighborhood $V_i(\subset U_i)$ 
of $p_i$ for each $i=1,2$
such that $f_1(V_1)\cap B_Y(P,r) \subset f_2(V_2)\cap B_Y(P,r)$.
\item For each neighborhood $V_i(\subset U_i)$ of $p_i$ $(i=1,2)$,
there exists a relatively compact neighborhood $W_i$ of $p_i$
such that $f_1(W_1)\subset f_2(W_2)$ and 
$\overline{W_i}\subset V_i$.
\end{enumerate}
\end{Theorem}

\begin{proof}
Obviously  (1) implies (2), and also (3) implies (1).
So it is sufficient to show that (2) implies (3).
So we assume (2).
We set $g_i:=f_i|_{V_i}$ ($i=1,2$).
Since $f_2$ is $U_2$-proper at $p_2$
and $g_2^{-1}(P)=\{p_2\}$,
Lemma \ref{prop:key000}
implies that
$f_2$ is strongly $U_2$-proper at $p_2$.
Hence, $f_2$ is strongly $V_2$-proper at $p_2$ by
Proposition \ref{prop:restriction}.
As a consequence,
$$
K_2:=g_2^{-1}(\overline{B_Y(P,r)})
$$
is a compact subset of $V_2$ for sufficiently small $r(>0)$.
We fix such an $r$, and
set 
$$
W_2:=g_2^{-1}(B_Y(P,r)).
$$
Then we have $\overline{W_2}\subset K_2(\subset V_2)$.
Since $X$ is a locally compact Hausdorff space,
there exists a relatively compact neighborhood $W_1$ of $p_1$
satisfying $\overline{W_1} \subset V_1$.
Moreover, since $f_1$ is continuous, we may assume
$
W_1\subset g_1^{-1}(B_Y(P,r)),
$
and so
\begin{align*}
f_1(W_1)&=g_1(W_1)
\subset B_Y(P,r)\cap g_1(V_1) \\
&\subset B_Y(P,r)\cap g_2(V_2)=g_2(W_2)=f_2(W_2).
\end{align*}
Since $\overline{W_i}\subset V_i$ ($i=1,2$), we obtain (3).
\end{proof}

\begin{Example}\label{ex:00}
In Theorem \ref{lem:I}, the assumption that
$f_2:U_2\to Y$  is proper at $p_2$ cannot be removed.
In fact, we set 
$f_1(x):=x$ ($x\in \R$) 
and let $f_2(x)$ be the function on $\R$ defined in
Example \ref{ex:0}. Then  $f_1(\R)=f_2(\R)=\R$ holds.
However, if we choose $V_1=V_2=(-1,1)$,
then $f_1(\overline{W_1})\subset f_2(\overline{W_2})$ never holds
for any choice of a pair of open intervals $(W_1,W_2)$
containing the origin in $(-1,1)$.
In this case, $f_2(x)$ is not proper at $x=0$ as shown in
Example \ref{ex:0}. 
\end{Example}

Here, we give the following terminology:

\begin{Def}
Let $f_i:(X_i,p_i)\to (Y,P)$ $(i=1,2)$ be two 
continuous maps. 
Then $f_1$ is said to be {\it image equivalent to $f_2$}
with respect to the pair of points $(p_1,p_2)$ 
if for any choice of 
a neighborhood $U_i(\subset X_i)$ ($i=1,2$)
of $p_i$, there exists a neighborhood $V_i(\subset U_i)$ ($i=1,2$)
of $p_i$ such that $f_1(V_1)\subset f_2(U_2)$
and $f_2(V_2)\subset f_1(U_1)$ hold simultaneously.
\end{Def}

Related to this definition, we also give the following.

\begin{Def}
Let $f_i:(X_i,p_i)\to (Y,P)$ $(i=1,2)$ be two 
continuous maps. 
Then $f_1$ is said to be {\it equi-image equivalent to $f_2$ as
a map germ} 
if for any choice of 
a neighborhood $U_i(\subset X_i)$ ($i=1,2$)
of $p_i$, there exists a neighborhood $V_i(\subset U_i)$ ($i=1,2$)
of $p_i$ such that $f_1(V_1)=f_2(V_2)$.
\end{Def}

\begin{Proposition}\label{prop:I2}
Let $f_i:(X_i,p_i)\to (Y,P)$ $(i=1,2)$ be two continuous maps
which are proper at $p_i$.
Then, the following assertions are equivalent each other:
\begin{enumerate}
\item The map $f_1$ is image equivalent to $f_2$
with respect to $(p_1,p_2)$.
\item The map $f_1$ is equi-image equivalent to $f_2$
with respect to $(p_1,p_2)$.
\end{enumerate}
\end{Proposition}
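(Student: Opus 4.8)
The plan is to prove the two implications separately. The implication $(2)\Rightarrow(1)$ is immediate: if $f_1(V_1)=f_2(V_2)$ for some neighborhoods $V_i\subset U_i$ of $p_i$, then simultaneously $f_1(V_1)=f_2(V_2)\subset f_2(U_2)$ and $f_2(V_2)=f_1(V_1)\subset f_1(U_1)$, which is exactly the definition of image equivalence. So the content of the proposition is the implication $(1)\Rightarrow(2)$.

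For $(1)\Rightarrow(2)$ the key point is that pointwise properness allows one to describe the germ of the image of $f_i$ by an \emph{honest set}, namely $f_i(U_i)\cap B_Y(P,t)$ for small $t>0$, which is literally the image of the concrete open neighborhood $(f_i|_{U_i})^{-1}(B_Y(P,t))$ of $p_i$. Given arbitrary neighborhoods $U_i$ of $p_i$, I would first replace each $U_i$ by a smaller one so that, by Definition \ref{def:P2} and Proposition \ref{prop:restriction}, $f_i$ is strongly $U_i$-proper at $p_i$; note that any $W_i$ eventually produced inside the shrunken $U_i$ still lies in the originally given neighborhood. Throughout I would use the elementary set identity $f_i\bigl((f_i|_{U_i})^{-1}(B_Y(P,r))\bigr)=f_i(U_i)\cap B_Y(P,r)$. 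The goal is to show that $f_1(U_1)\cap B_Y(P,t)=f_2(U_2)\cap B_Y(P,t)$ for all sufficiently small $t>0$; granting this, the sets $W_i:=(f_i|_{U_i})^{-1}(B_Y(P,t))$ are open neighborhoods of $p_i$ with $W_i\subset U_i$ and $f_i(W_i)=f_i(U_i)\cap B_Y(P,t)$, so $f_1(W_1)=f_2(W_2)$, which is $(2)$.

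To prove the equality of the truncated images, I would apply image equivalence to the pair $U_1,U_2$ to obtain neighborhoods $A_i\subset U_i$ of $p_i$ with $f_1(A_1)\subset f_2(U_2)$ and $f_2(A_2)\subset f_1(U_1)$. Since $f_1$ is strongly $U_1$-proper at $p_1$, Definition \ref{def:P1} applied with the neighborhood $A_1$ gives a compact set $K\subset Y$ having $P$ as an interior point such that $(f_1|_{U_1})^{-1}(K)$ is a compact subset of $A_1$; choosing $r_1>0$ with $\overline{B_Y(P,r_1)}\subset K$, we get $(f_1|_{U_1})^{-1}(B_Y(P,r))\subset A_1$ for every $r\le r_1$, hence $f_1(U_1)\cap B_Y(P,r)\subset f_1(A_1)\subset f_2(U_2)$, and therefore $f_1(U_1)\cap B_Y(P,r)\subset f_2(U_2)\cap B_Y(P,r)$. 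By the symmetric argument, strong $U_2$-properness at $p_2$ yields $r_2>0$ with the reverse inclusion valid for all $r\le r_2$. Taking $t\le\min(r_1,r_2)$ forces the two truncated images to coincide, which completes the proof.

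The main obstacle is conceptual rather than computational. The naive approach of iterating Proposition \ref{lem:I} back and forth between the pairs $(f_1,f_2)$ and $(f_2,f_1)$ only produces a nested sandwich $\cdots\subset f_1(W_1^{(k)})\subset f_2(W_2^{(k)})\subset\cdots$ which need not stabilize at any finite stage, so equality cannot be extracted by iteration; and passing to the intersection over $k$ produces closed sets rather than neighborhoods. The way around this is to dispense with iteration altogether and exploit \emph{strong} properness, which upgrades ``the image of some sub-neighborhood of $p_i$'' to ``the whole local image of $f_i$ near $P$'': the latter is canonical and is exactly $f_i(U_i)\cap B_Y(P,t)$. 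Properness is used precisely to force the preimage of a small ball to stay inside the prescribed sub-neighborhood $A_i$, and Example \ref{ex:00} shows that this step genuinely fails without properness.
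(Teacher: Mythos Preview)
Your argument is correct and follows essentially the same route as the paper: both proofs reduce to showing that the ``truncated images'' $f_i(U_i)\cap B_Y(P,r)$ coincide for small $r$, using strong properness to force $(f_i|_{U_i})^{-1}(B_Y(P,r))$ into the sub-neighborhoods $A_i$ supplied by image equivalence, together with the set identity $f_i\bigl((f_i|_{U_i})^{-1}(B_Y(P,r))\bigr)=f_i(U_i)\cap B_Y(P,r)$. The only cosmetic difference is that the paper obtains strong properness via Proposition~\ref{Lem:krsuy} and Proposition~\ref{prop:I1}, whereas you invoke Definition~\ref{def:P2} and Proposition~\ref{prop:restriction} directly.
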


\begin{proof}
Since the assertions (1) and (2) are local,
we may assume that
$f_i^{-1}(P)=\{p_i\}$,
without loss of generality.
It is obvious that (2)  implies (1).
So it is sufficient to show that (1) implies (2).
We let $U_i$ ($i=1,2$) be a neighborhood of $p_i$.
Since $X_i$ is a locally compact Hausdorff space,
we can take a neighborhood $W_i$ of $p_i$ ($i=1,2$) so that
$\overline{W_i}(\subset U_i)$ is compact and 
$f_i$ is $W_i$-proper at $p_i$ (cf. Proposition \ref{Lem:krsuy}).
By Theorem~\ref{prop:I1}, we may assume that 
$f_i$ ($i=1,2$) is strongly $W_i$-proper at $p_i$.
By (1), we may assume that both
$f_2(V_2)\subset f_1(W_1)$
and
$f_1(V_1)\subset f_2(W_2)$ hold
for some $V_i\subset W_i$ ($i=1,2$),
and there exists $r_0(>0)$ such that
$$
(f_2|_{W_2})^{-1}(B_Y(P,r))\subset V_2
\quad \text{and}
\quad
(f_1|_{W_1})^{-1}(B_Y(P,r))\subset V_1
$$
for  $r\in (0,r_0]$.
Then we have
\begin{align*}
f_2\Big(
(f_2|_{W_2})^{-1}(B_Y(P,r))
\Big)
&= B_Y(P,r)\cap f_2(V_2)
\\ 
&\subset 
B_Y(P,r)\cap f_1(W_1)=
f_1\Big(
(f_1|_{W_1})^{-1}(B_Y(P,r))
\Big)
\end{align*}
and
\begin{align*}
f_1\Big(
(f_1|_{W_1})^{-1}(B_Y(P,r))
\Big)
&= B_Y(P,r)\cap f_1(V_1)
\\ 
&\subset 
B_Y(P,r)\cap f_2(W_2)=
f_2\Big(
(f_2|_{W_2})^{-1}(B_Y(P,r))
\Big).
\end{align*}
So, we have
$$
f_1\Big(
(f_1|_{W_1})^{-1}(B_Y(P,r))
\Big)
=
f_2\Big(
(f_2|_{W_2})^{-1}(B_Y(P,r))
\Big)
\quad (r\in (0,r_0]).
$$
\end{proof}

It should be remarked that
Theorem A in the introduction gives 
a sufficient condition for equi-image equivalency without assuming
the image equivalency between two maps
(see Corollary \ref{cor:EQA}).

\begin{Remark}
We consider the condition that for any choice of 
a neighborhood $U_2(\subset X_2)$ 
of $p_2$, there exists a neighborhood $V_1(\subset U_1)$ 
of $p_1$ such that $f_1(V_1)\subset f_2(U_2)$.
This condition does not
imply the existence of
a neighborhood $V_2$ such that $f_2(V_2)\subset 
f_1(V_1)$.
In fact, if we set
$$
f_1(x)=x^2,\qquad f_2(x)=x,
$$
then 
$
f_1((-r,r))\subset f_2(\R)
$,
but the opposite inclusion
$
f_2((-r,r))\subset f_1(\R)
$
never holds for any $r>0$.
So, to show  
equi-image equivalency,
we need to assume 
the image equivalency of $f_1$ and $f_2$
in the statement of
Proposition~\ref{prop:I2}.
\end{Remark}

\section{Proof of Theorem A}
To prove Theorem A, we prepare several propositions and lemmas:
Let $U_i$ ($i=1,2$) be two domains in $\R^n$,
and let $f_i:U_i\to \R^{n+1}$ be two
$C^r$-differentiable frontal maps
with $C^r$-differentiable unit normal vector fields $\nu_i$ defined on $U_i$.
Since $p_2$ satisfies (a2),
\begin{equation}\label{eq:g2}
f_2^{-1}(f_2(p_2))=\{p_2\}
\end{equation}
holds.
We take the \lq Legendrian lift\rq\ 
\[
    L_{f_i}:=(f_i,\nu_i)\colon{} U_i \longrightarrow 
 \R^{n+1} \times S^n
\]
of $f_i$ ($i=1,2$). 
We also consider the map 
\[
    L'_{f_2}:=(f_2,-\nu_2)\colon{} U_2 \longrightarrow 
 \R^{n+1} \times S^n.
\]
By (a4), there exists an
open neighborhood $U'_i(\subset U_i)$ ($i=1,2$)
such that
$L_{f_i}$ is injective on $U'_i$.
Since $(P, \nu_2(p_2))\ne (P, -\nu_2(p_2))$
($P:=f_1(p_1)=f_2(p_2)$),
there exists a relatively compact 
open subset $V_2$ of $U'_2$
satisfying  $\overline{V_2}\subset U'_2$
and
\begin{equation}\label{eq:emptyU2}
L_{f_2}(\overline{V_2})\cap L'_{f_2}(\overline{V_2})=\emptyset.
\end{equation}
By Theorem \ref{lem:I},
there exists a relatively compact 
open subset $V_1$ of $U'_1$
satisfying  $\overline{V_1}\subset U'_1$
and $f(\overline{V_1})\subset f_2(V_2)$.
We then set
$$
L_1:=L_{f_1}|_{\overline{V_1}},\quad
L_2:=L_{f_2}|_{\overline{V_2}},\quad
L'_2:=L_{f_2}|_{\overline{V_2}},
$$
and 
$$
B_+:=\{p\in \overline{V_1} \,;\, L_{1}(p)\in L_{2}(\overline{V_2})\},\quad
B_-:=\{p\in \overline{V_1}\,;\, L_{1}(p)\in L'_{2}(\overline{V_2})\}.
$$
Then we can rewrite 
$$
B_+=L_1^{-1}(L_{2}(\overline{V_2})),\qquad
B_-=L_1^{-1}(L'_{2}(\overline{V_2}))
$$
and so $B_\pm$ are closed subsets of $\overline{V_1}$.
We set $g_i:=f_i|_{\overline{V_i}}$ ($i=1,2$) and
let $\mathcal R_i$ be the set of regular values of the map $g_i$.
We set
$
\mathcal R:=\mathcal R_1\cap \mathcal R_2
$
and
$$
A_1:=g_1^{-1}(\mathcal R),\qquad S_1:=\overline{V_1}\setminus A_1
$$

\begin{Proposition}\label{lem:added}
The relation
$
\overline{V_1}= B_+\cup B_-
$
holds.
\end{Proposition}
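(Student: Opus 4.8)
The plan is to establish the only nontrivial inclusion, $\overline{V_1}\subset B_+\cup B_-$; the reverse inclusion $B_+\cup B_-\subset\overline{V_1}$ is immediate from the definitions. Since it has already been observed that $B_+=L_1^{-1}(L_2(\overline{V_2}))$ and $B_-=L_1^{-1}(L'_2(\overline{V_2}))$ are closed in $\overline{V_1}$, the union $B_+\cup B_-$ is closed, so it suffices to show that it contains a dense subset of $\overline{V_1}$. I would take this dense subset to be $O=\hat f_1^{-1}(\mathcal R)$: that $O$ is dense in $\overline{V_1}$ follows from (a3) together with the definition of $\mathcal R=\mathcal R_1\cap\mathcal R_2$, the non-regular values being negligible by a Sard-type argument. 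Thus the whole statement reduces to proving $O\subset B_+\cup B_-$.

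To prove this, I would fix $p\in O$ and set $y_0:=f_1(p)$. Since $f_1(\overline{V_1})\subset f_2(V_2)$, we have $y_0\in f_2(V_2)$; since $y_0$ is a regular value of (the relevant restriction of) $f_1$, the map $f_1$ is an embedding near $p$ onto a $C^r$-hypersurface $N\ni y_0$ with unit normal $\pm\nu_1$; and since $y_0$ is a regular value of $f_2$, the set $f_2^{-1}(y_0)\cap\overline{V_2}$ is a finite set $\{q_1,\dots,q_m\}$ of regular points of $f_2$. By compactness of $\overline{V_2}$ there are $\delta>0$ and, for each $j$, a neighborhood of $q_j$ on which $f_2$ embeds onto a $C^r$-hypersurface $M_j\ni y_0$, in such a way that $f_2(\overline{V_2})\cap B(y_0,\delta)\subset M_1\cup\dots\cup M_m$; hence $N\cap B(y_0,\delta)\subset M_1\cup\dots\cup M_m$ as well. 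The key step is then the following local fact: for each $j$ let $N_j$ be the open subset of $N$ consisting of the points near which $N$ coincides with $M_j$; since an $n$-dimensional submanifold cannot, near any of its points, be a finite union of sets that are nowhere dense in it, a Baire category argument shows $\bigcup_j N_j$ is dense in $N$ near $y_0$, so by the pigeonhole principle $y_0\in\overline{N_{j_0}}$ for some $j_0$. Choosing $y_k\in N_{j_0}$ with $y_k\to y_0$ and passing to the limit in the equality (up to sign) between the unit normal of $N$ at $y_k$ and that of $M_{j_0}$ at $y_k$, continuity of the normal fields yields $\nu_1(p)=\pm\nu_2(q_{j_0})$. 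Hence $L_1(p)\in\{L_2(q_{j_0}),L'_2(q_{j_0})\}$ with $q_{j_0}\in\overline{V_2}$, i.e.\ $p\in B_+\cup B_-$. Combined with the first paragraph, $\overline{V_1}=\overline{O}\subset B_+\cup B_-\subset\overline{V_1}$.

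I expect the main obstacle to be exactly this tangent-plane matching at $y_0$. A priori the germ of $f_2(\overline{V_2})$ at $y_0$ is a union of finitely many hypersurface germs $M_1,\dots,M_m$ through $y_0$ whose tangent planes at $y_0$ need not coincide, and $N$ is only known to lie inside this union; in general $N$ need not coincide with any single $M_j$ on a neighborhood of $y_0$ (this can genuinely fail in the $C^\infty$ category). What saves the argument is that the Baire/pigeonhole reasoning lets $y_0$ be approached from within one sheet $M_{j_0}$, after which continuity of the Gauss maps forces $\nu_1(p)=\pm\nu_2(q_{j_0})$. The remaining ingredients — density of $O$ in $\overline{V_1}$, finiteness of $f_2^{-1}(y_0)\cap\overline{V_2}$, and the compactness bookkeeping producing $\delta$ and the sheets $M_j$ — are routine once the definitions of $\hat f_i$, $\mathcal R_i$, and $O$ are unwound.
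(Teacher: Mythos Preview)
Your proposal is correct and follows the same overall architecture as the paper: reduce to $O\subset B_+\cup B_-$ using density of $O$ (the paper isolates this as Lemma~\ref{lem:O}) and closedness of $B_\pm$, then take closures. The genuine difference is in the tangent-plane matching at a regular value $y_0$. The paper invokes condition~(a4): since $L_2$ is injective on $\overline{V_2}$, the normals $\nu_2(q_1),\dots,\nu_2(q_m)$ are pairwise distinct, so the hypersurface germs $M_1,\dots,M_m$ through $y_0$ are pairwise transversal; a smooth hypersurface germ $N\subset\bigcup_j M_j$ then meets each $M_j$ with a different tangent plane only in a measure-zero set near $y_0$, forcing $T_{y_0}N=T_{y_0}M_{j_0}$ for some $j_0$ and hence $\nu_1(p)=\pm\nu_2(q_{j_0})$ directly. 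Your Baire/pigeonhole argument reaches the same conclusion without using~(a4): you find a sheet $M_{j_0}$ on which $N$ agrees along a set accumulating at $y_0$, and pass to the limit in the Gauss maps. Your route is thus a little more robust (it does not need the $M_j$ to be transversal, so it would survive dropping the injectivity of $L_2$), while the paper's is shorter and more direct given that (a4) is already assumed.
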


To prove this, we prepare the following lemma:

\begin{Lemma}\label{lem:O}
$A_1$ is a dense subset of $\overline{V_1}$. 
\end{Lemma}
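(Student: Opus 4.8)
The plan is to establish the openness and the density of $O$ separately, the latter being the real content.

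\emph{Openness.} Write $\Sigma_i$ for the singular set of $f_i$ (the locus where $df_i$ fails to be injective). Then $\R^{n+1}\setminus\mathcal R_i$ is precisely the set of critical values of $\hat f_i=f_i|_{\overline{V_i}}$, namely $C_i:=f_i(\Sigma_i\cap\overline{V_i})$. Since $\overline{V_i}\subset U_i$ is compact and $\Sigma_i$ is closed, $\Sigma_i\cap\overline{V_i}$ is compact, hence so is its continuous image $C_i$; thus $C_i$ is closed in $\R^{n+1}$, $\mathcal R_i=\R^{n+1}\setminus C_i$ is open, $\mathcal R=\mathcal R_1\cap\mathcal R_2$ is open, and $O=\hat f_1^{-1}(\mathcal R)$ is open in $\overline{V_1}$ by continuity of $\hat f_1$.

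\emph{Density.} It is enough to show that the closed set $S=\overline{V_1}\setminus O=\hat f_1^{-1}(C_1\cup C_2)$ has empty interior in $\overline{V_1}$. Suppose not, so $S$ contains a nonempty relatively open set. Since $V_1$ is dense in $\overline{V_1}$ and, by (a3), the regular set of $f_1$ is open and dense in $U_1$ (hence its trace on $\overline{V_1}$ is open and dense in $\overline{V_1}$), that relatively open set contains a nonempty open $W\subset\R^n$ on which $f_1$ is an immersion; shrinking $W$ once more I may assume $f_1|_W$ is an embedding, so $M:=f_1(W)$ is an $n$-dimensional $C^r$-submanifold of $\R^{n+1}$ contained in $C_1\cup C_2$. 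I would then derive a contradiction from the smallness of $C_1$ and $C_2$: each $C_i$ is the image under the $C^r$-map $f_i$ of the singular set $\Sigma_i$, along which $\operatorname{rank}df_i\le n-1$, so by the Morse--Sard--Federer theorem when $r=\infty$ (and, when $r=\omega$, because a proper real-analytic subset is a locally finite union of submanifolds of dimension $\le n-1$) the set $C_i$ has Hausdorff dimension at most $n-1$; therefore $C_1\cup C_2$ cannot contain a nonempty open subset of the $n$-manifold $M$. This contradiction shows $S$ has empty interior, so $O$ is dense, and together with the first part $O$ is open and dense in $\overline{V_1}$.

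The step I expect to be the crux is exactly this control on $C_1$ and $C_2$: the ordinary Sard theorem is vacuous here, since the target dimension $n+1$ exceeds the source dimension $n$, so one must invoke the Federer refinement bounding the Hausdorff dimension of the image of the rank-$\le k$ locus by $k$ (or, in the analytic category, the stratification of analytic sets). Everything else is the soft dichotomy: a point of $\overline{V_1}$ is either a singular point of $f_1$, where (a3) already rules out an open set, or it lies in the regular set, where a neighborhood of its image in the surface would have to be absorbed by the $(n-1)$-dimensional set $C_1\cup C_2$.
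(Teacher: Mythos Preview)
Your argument is correct and follows the same line as the paper's: assume $S$ has interior, pass to an open $W\subset S\cap V_1$ on which $f_1$ is an immersion (using (a3)), and obtain a contradiction because $f_1(W)\subset C_1\cup C_2$ is an $n$-dimensional piece sitting inside the set of critical values. The paper's proof is terser---it skips the openness check entirely and writes only ``By the Sard theorem, $f(S)$ is of Hausdorff dimension less than $n$''---so your explicit treatment of openness and your identification of the needed Federer refinement (ordinary Sard giving only $(n{+}1)$-measure zero, which is useless here) are genuine clarifications rather than deviations.
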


\begin{proof}
We suppose that $A_1$ is not dense
in  $\overline{V_1}$.
Then $S_1$ has an interior point $q$.
Since the boundary $\partial V_1$ of $V_1(\subset U_1)$ has no interior 
point and
$g_1$ gives an immersion on an open dense set of $V_1$,  
we may assume that there exists an open neighborhood $W(\subset V_1)$ of $q$
such that $W\subset S_1$ and $g_1|_W$ is an immersion.
By the Sard theorem, $f(S_1)$ is of Hausdorff dimension less than $n$,
contradicting the fact that $f_1|_W$ is an immersion.
\end{proof}

\begin{proof}[Proof of Proposition \ref{lem:added}]
We fix an arbitrary $\vect{a}\in \mc R$, and
show the inverse image
$g_i^{-1}(\vect{a})$ ($i=1,2$)
are finite point sets.
It is enough to show this for $i=1$, namely, 
showing it for $g_1$.
We assume $g_1^{-1}(\vect{a})$ is an infinite point set.
Since
$\overline{V_1}$
is compact, 
taking a sequence 
$\{q_k\}_{k=1}^\infty\subset g_1^{-1}(\vect{a})$
consisting of distinct points,
it has an accumulation point
$q\in \overline{V_1}$.
Replacing $\{q_k\}_{k=1}^\infty$ by
a suitable subsequence
if necessary, we may assume that 
it converges to $q$.
Since $f_1(q_k)=\vect{a}$, by the continuity of
$f_1$, it holds that $f_1(q)=\vect{a}$.
By the definition of $\mc R$ and because $\vect{a}\in \mc R$,
$q$ is a regular point of $f_1$.
Thus there exists a neighborhood $W$ of $q$
such that $f_1|_W$ is an embedding.
Since $\{q_k\}_{k=1}^\infty$ 
converges to $q$,
we have
$$
f_1(q_k)=\vect{a} =f_1(q),
$$
contradicting the fact that
$f_1|_W$ is injective.

Hence, there exist positive integers
$l$ and $m$ such that
\[
  g_1^{-1}(\vect{a})=\{x_1,\ldots,x_l\},\quad
  g_2^{-1}(\vect{a})=\{y_1,\ldots,y_{m}\}.
\]
Since $L_i$ are injective on $\overline{V_i}$ $(i=1,2)$,
$\nu_1(x_a)\in S^n$ $(a=1,\ldots,l)$ are mutually distinct, and
$\nu_2(y_b)\in S^n$ $(b=1,\ldots,m)$ are also mutually distinct.
Thus, the images of $g_i$ ($i=1,2$) at $\vect{a}$ are
finitely many hypersurfaces that intersect transversally
to each other.
In particular, the fact $f_1(\overline{V_1})\subset f_2(\overline{V_2})$
implies $l\le m$. Thus, changing the order appropriately,
we may assume
\begin{equation}\label{eq:zaka100}
L_{1}(x_j)
=
L_{2}(y_j)\quad\text{or}\quad L'_{2}(y_j)
\quad
(j=1,\ldots,l).
\end{equation}
Namely,
$g_1^{-1}(\vect{a})\subset B_+\cup B_-
$
holds. In particular, we have
\begin{equation}\label{eq:starAA}
A_1=\bigcup_{
\vect{a} \in{\mc R}}
g_1^{-1}(\vect{a})\subset B_+\cup B_-.
\end{equation}
Since $B_+$ and $B_-$ are closed subsets 
of $\overline{V_1}$,
by taking the closure of \eqref{eq:starAA}, Lemma~\ref{lem:O} yields 
the conclusion.
\end{proof}

We next prepare the following:

\begin{Lemma}\label{lem:l1l2}
$L_1(p_1)$ coincides with
$L_2(p_2)$ or $L_2'(p_2)$.
\end{Lemma}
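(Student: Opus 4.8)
The plan is to read off the conclusion directly from Proposition~\ref{prop:O} together with the normalization~\eqref{eq:g2}. By Proposition~\ref{prop:O} we have $\overline{V_1}=A_+\cup A_-$, and since $V_1$ is a neighborhood of $p_1$ we have $p_1\in\overline{V_1}$; hence $p_1\in A_+$ or $p_1\in A_-$. I would treat these two cases in parallel.

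Suppose first that $p_1\in A_+$. By the definition of $A_+$ there is a point $q\in\overline{V_2}$ with $L_1(p_1)=L_2(q)$. Comparing the first components gives $f_2(q)=f_1(p_1)=P=f_2(p_2)$, so $q$ lies in $g_2^{-1}(g_2(p_2))$, where $g_2=f_2|_{\overline{V_2}}$. By hypothesis (a2), $f_2$ is proper at $p_2$, so by Proposition~\ref{prop:I1} the set $f_2^{-1}(P)$ is finite; this is exactly why $V_2$ could be chosen small enough that \eqref{eq:g2} holds, i.e. $g_2^{-1}(g_2(p_2))=\{p_2\}$. Therefore $q=p_2$, and comparing second components yields $\nu_1(p_1)=\nu_2(p_2)$, so $L_1(p_1)=L_2(p_2)$. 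The case $p_1\in A_-$ is identical with $L_2=(f_2,\nu_2)$ replaced by $L_2'=(f_2,-\nu_2)$: one obtains $q\in\overline{V_2}$ with $L_1(p_1)=L_2'(q)$, hence $f_2(q)=P$, hence $q=p_2$ by \eqref{eq:g2}, hence $L_1(p_1)=L_2'(p_2)$. Combining the two cases proves the lemma.

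I do not expect a serious obstacle here; the one point that needs care is ensuring that the normalization~\eqref{eq:g2} is genuinely available, that is, that $V_2$ has been shrunk enough that $p_2$ is the only preimage of $P$ inside $\overline{V_2}$. This is precisely the reduction already carried out just before Proposition~\ref{prop:O}, and it relies only on (a2) together with Proposition~\ref{prop:I1}. Note in passing that, by~\eqref{eq:emptyU2}, at most one of the two alternatives can in fact occur, although the statement of the lemma as given does not require this refinement.
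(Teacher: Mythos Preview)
Your proof is correct. The paper takes a slightly different route: rather than invoking Proposition~\ref{prop:O}, it builds a direct sequence argument, choosing regular points $q_k\in O_1$ converging to $p_1$, matching each to a point $q'_k\in\overline{V_2}$ with $L_1(q_k)=L_2(q'_k)$ or $L_2'(q'_k)$, and then passing to an accumulation point $q'\in\overline{V_2}$. Both approaches finish the same way, using \eqref{eq:g2} to force $q'=p_2$ (resp.\ $q=p_2$). Your argument is cleaner, since the decomposition $\overline{V_1}=A_+\cup A_-$ already packages exactly the limit argument the paper reproduces by hand; the paper's version is more self-contained but gains nothing by it, as Proposition~\ref{prop:O} is used anyway in the proof of Theorem~A immediately afterward.
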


\begin{proof}
Take a sequence $\{q_{k}\}_{k=1}^\infty\subset A_1$ which
converges to $p_1$.
We set $Q_{k}:=f_1(q_{k})$.
Noticing that $f_1$ is regular at $q_{k}$,
let $T_{k}$ be the tangent hyperplane of $f_1$ at $f_1(q_{k})$.
Since $f_2^{-1}(Q_{k})$ is a finite point set,
there exists a point $q'_{k}\in f_2^{-1}(Q_{k})(\subset A_2)$ such that 
the tangent hyperplane of $f_2$ at $f_2(q'_{k})$ coincides with
$T_{k}$. Then 
$L_1(q_{k})=L_2(q'_{k})$ or
$L_1(q_{k})=L_2'(q'_{k})$ holds.
Since $\{q'_{k}\}_{n=1}^\infty$ is a sequence in 
$\overline{V_2}$,
there is an accumulation point 
$q'\in \overline{V_2}$.
Then $L_1(p_1)=L_2(q')$ or
$L_1(p_1)=L_2'(q')$ holds.
In particular, $f_1(p_1)=f_2(q')$ holds.
If $L_1(p_1)=L_2(p_2)$, then 
the injectivity of $L_2$ and
\eqref{eq:emptyU2} imply $q'=p_2$.
On the other hand, when $f_2^{-1}(f_2(p))$ is a finite point set,
then \eqref{eq:g2} yields that $q'=p_2$.
So we obtain the conclusion.
\end{proof}

\noindent
\begin{proof}[Proof of Theorem A]
Replacing $\nu_2$ by $-\nu_2$ if necessary, we may assume 
$L_1(p_1)=L_2(p_2)$. By Lemma \ref{lem:O}, we have
$B_+\cup B_-=\overline{V_1}$.
By \eqref{eq:emptyU2},
$
\overline{V_1}= B_+\cup B_-
$
Since $\overline{V_1}$ is connected,
either
$\overline{V_1}=B_+$ or $\overline{V_1}=B_-$ holds.
Since $L_1(p_1)=L_2(p_2)$, 
$B_+$ is non-empty.
Thus $\overline{V_1}=B_+$ holds
and so 
$
L_1(\overline{V_1})\subset L_2(\overline{V_2}).
$
Since $L_2$ is an injective continuous map
from the compact space $\overline{V_2}$ to a
Hausdorff space, $L_2^{-1}:L_2(\overline{V_2})\to \overline{V_2}$ 
is also a continuous map.
Thus, we can define a continuous map
$\psi:\overline{V_1}\to \overline{V_2}$
by
$$
\psi:=L_2^{-1}\circ L_1:\overline{V_1} \to \overline{V_2}.
$$
By definition, it satisfies $L_1=L_2\circ \psi$, that 
is $f_1=f_2\circ \psi$ and $\nu_1=\nu_2\circ \psi$ on $\overline{V_1}$. 

Finally, since $L_1$ is injective,  
$\psi$ is an injective continuous map from
the compact space $\overline{V_1}$ to
the Hausdorff space  $\overline{V_2}$.
So it gives a homeomorphism between 
$\overline{V_1}$ and $\psi(\overline{V_1})\subset \R^n$.
By the invariance of domain (cf. \cite{GG}), 
$V'_2:=\psi(V_1)$ is a connected open subset of $\R^n$.
Thus, we have $L_1(V_1)=L_2(V'_2)$
and $\psi$ gives a homeomorphism between
$V_1$ and $V'_2$.
Replacing $V_2$ by $V'_2$, 
we obtain the relation $f_1(V_1)=f_2(V_2)$.
\end{proof}

\begin{Corollary}\label{cor:EQA}
Let $f_1$ and $f_2$ are as in Theorem A, then
these two maps are equi-image equivalent.
\end{Corollary}

\begin{proof}
Since we have shown that
$f_1=f_2\circ \psi$, which implies that
$f_1|_{V_1}$ is $V_1$-proper at $p_1$
and $f_1^{-1}(f_1(p_1))=\{p_1\}$
as well as and $f_2|_{V_2}$.
Since $f_1(V_1)=f_2(V_2)$,
Theorem~\ref{lem:I}
implies that 
$f_1$ and $f_2$ are image equivalent.
Then Proposition \ref{prop:I2}
implies that
$f_1$ and $f_2$ are equi-image equivalent.
\end{proof}

\section{Proof of Theorem B}

\subsection{The half-arc-length parameter of generalized cusps}

Let $\sigma:(a,b)\to \R^2$ be a 
$C^r$-curve defined on 
an open interval $(a,b)(\subset\R)$ where $a<b$.
A point $t=c$ on $(a,b)$
is called a {\it generalized cusp} if
$\sigma'(c)(=d\sigma(c)/dt)=\mb 0$ 
and $\sigma''(c)\ne \mb 0$.
In this situation, we can take the
inverse function $t=t(w)$
of the function $w:(a,b)\to \R$
which is $C^r$-differentiable and
satisfies
$$
w(t)^2:=\left|\int_c^t |\sigma'(u)|du\right|,
\qquad \frac{dw}{dt}>0.
$$
Then $w$ gives the half-arc-length 
parameter of the curve $\sigma$ at $u=c$.
In the case of $c=0$, using the half-arc-length
parameter $w$, the curve $\sigma$ has the following
expression $($cf.~\cite{SU} and \cite{HNSUY}$)$
\begin{equation}
\label{eq:SU}
\sigma(w)=2 \int_0^w u(\cos \lambda(u),\sin \lambda(u))du,
\quad
\lambda(w):=\frac1{\sqrt{2}}\int_0^w \mu(u)du,
\end{equation}
where $\mu(u)$ is a $C^r$-function.
Regarding this geometric meaning of $w$,
the following assertion is obvious:

\begin{Proposition} \label{prop:sigma}
Let $\sigma_i(w)$ $(w\in J)$ 
$(i=1,2)$ be two $C^r$-differentiable
generalized cusps
at $w=0$, where $J:=(-a,a)$ $(a>0)$.
Suppose that $w$ is the half-arc-length
parametrization of $\sigma_i$
for each $i=1,2$ at $w=0$.  
If $\sigma_1(J)$ coincides with 
$\sigma_2(J)$, then
either $\sigma_1(w)=\sigma_2(w)$ 
or $\sigma_1(w)=\sigma_2(-w)$ holds.
\end{Proposition}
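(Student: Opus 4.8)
The plan is to reduce the claim to an elementary statement about $C^r$-functions of one variable by exploiting the defining features of a generalized cusp. Write $\sigma_i(w) = \bigl(x_i(w), y_i(w)\bigr)$ in coordinates chosen so that, after a rigid motion, the common second-derivative direction at $w=0$ is the first axis. Since $t=c$ (here $w=0$) is a generalized cusp and $w$ is the \emph{half}-arc-length parameter, a short computation shows that $\sigma_i'(0)=\mb 0$ while $\sigma_i''(0)\ne\mb 0$, and the normalization $w(t)^2 = \bigl|\int_c^t|\sigma'(u)|\,du\bigr|$ forces $|\sigma_i'(w)|$ to vanish to exactly first order at $w=0$ in a controlled way; concretely one gets $|\sigma_i''(0)|$ normalized (say $=2$, matching the model $f_0$), so that $\sigma_i(w) = \sigma_i(0) + (w^2,0) + O(w^3)$ in the adapted frame. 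In particular the image $\sigma_i(J)$ has a well-defined tangent \emph{line} $\ell$ at $\sigma_i(0)$ (the first axis), and locally the curve lies on one side, so the branch is a graph over $\ell$: there is a $C^r$-function, essentially $x\mapsto y$, describing $\sigma_i(J)$ near $\sigma_i(0)$ once we know the two half-branches $w>0$ and $w<0$ separately.

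Next I would use the hypothesis $\sigma_1(J)=\sigma_2(J)$ together with properness. Both $\sigma_i$ are immersions away from $w=0$, hence proper at each $w\ne 0$ by Corollary~\ref{cor:Im}; at $w=0$ the preimage $\sigma_i^{-1}(\sigma_i(0))$ is a single point for $w$ near $0$ because $x_i(w)=w^2+O(w^3)$ is strictly positive for $0<|w|$ small, so $\sigma_i$ is proper at $w=0$ by Proposition~\ref{prop:I1}. Then Proposition~\ref{prop:I2} applies: image equivalence upgrades to equi-image equivalence, so on a possibly smaller interval $\sigma_1(J')=\sigma_2(J')$ as germs. Now I would set up the connecting map directly: the function $w\mapsto x_i(w)$ is, on each of the two closed half-intervals $[0,a)$ and $(-a,0]$, a $C^r$-diffeomorphism onto its image $[0,\delta)$, because $dx_i/dw = 2w + O(w^2)$ changes sign only at $0$ and $x_i$ has a strict minimum there of order exactly two; invoking the Malgrange/Whitney-type fact that a $C^r$-function with a nondegenerate minimum has a $C^r$ square-root of $x_i(w)-x_i(0)$, one can even write $x_i(w) = x_i(0) + \varphi_i(w)^2$ with $\varphi_i$ a $C^r$-diffeomorphism near $0$, $\varphi_i'(0)>0$. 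Using $\varphi_1,\varphi_2$ as the common "straightened" half-arc-length coordinate on the two branches, the equality of images yields $\sigma_2^{-1}\circ\sigma_1 = \varphi_2^{-1}\circ(\pm\varphi_1)$ on the overlap, and a continuity/orientation argument pins down a single global sign: the map must carry the $w>0$ branch of $\sigma_1$ either to the $w>0$ branch of $\sigma_2$ (sign $+$) or to the $w<0$ branch (sign $-$), with no switching, because the branches meet only at $w=0$. Hence $\sigma_1(w)=\sigma_2(\pm w)$, as claimed.

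I would then check that the two branches are genuinely glued $C^r$-smoothly across $w=0$ — i.e. that the piecewise-defined identification $\varphi_2^{-1}\circ(\pm\varphi_1)$ is $C^r$ at $0$ — which is immediate since it equals the map $w\mapsto \pm w$ composed with $C^r$-diffeomorphisms fixing $0$; this is where the half-arc-length normalization does its real work, since for an \emph{arbitrary} regular reparametrization of the branches the two one-sided derivatives at $0$ need not match, but the half-arc-length parameter is exactly the one that makes $\varphi_i(w)=w\cdot(1+O(w))$ on both sides with the same linear coefficient.

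The main obstacle is precisely this last smoothness-at-the-cusp point: away from $w=0$ everything is classical inverse-function-theorem bookkeeping, but at the cusp one must verify that the candidate connecting map is $C^r$ (not merely continuous, as the example $t^{1/3}$ in the introduction warns can fail for bad parametrizations) and that the sign is globally constant. The cleanest route is to prove the $C^r$ square-root representation $x_i(w)=x_i(0)+\varphi_i(w)^2$ with $\varphi_i$ a diffeomorphism — this is where the hypothesis $\sigma_i''(0)\ne\mb 0$ is essential — and then the desired equality $\sigma_1(w)=\sigma_2(\pm w)$ follows by comparing the $C^r$-extensions of both sides, which agree on the dense set $w\ne 0$ by the immersive uniqueness of reparametrization and hence agree identically.
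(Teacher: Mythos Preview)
The paper offers no proof of this proposition, declaring it obvious from the defining property of the half-arc-length parameter: $w^2$ is, by construction, the arc-length of $\sigma_i$ from the cusp $\sigma_i(0)$ to $\sigma_i(w)$. Since the images coincide and the cusp point is intrinsically determined, for any $P=\sigma_1(w)=\sigma_2(w')$ on the common image the arc-length from the cusp is the same computed either way, so $w^2=(w')^2$; continuity of the sign of $w'/w$ on $J\setminus\{0\}$ then fixes a single global choice. Equivalently, differentiating the half-arc-length condition gives $|\sigma_i'(w)|=2|w|$, so if $\sigma_1=\sigma_2\circ\psi$ then $|w|=|\psi(w)\,\psi'(w)|$, whence $\psi(w)^2=w^2$.

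Your proposal builds substantially more machinery---properness from Section~1, a Morse-type square root $x_i(w)-x_i(0)=\varphi_i(w)^2$, a connecting map $\psi=\varphi_2^{-1}\circ(\pm\varphi_1)$---but never closes the loop to $\psi(w)=\pm w$. What you actually establish is that $\psi$ is a $C^r$-diffeomorphism with $\psi(0)=0$ and $\psi'(0)=\pm1$; your remark that half-arc-length ``makes $\varphi_i(w)=w(1+O(w))$'' is only a first-order statement and does not force $\varphi_1=\varphi_2$. There is no reason, from your square-root construction alone, that the $\varphi_i$ should coincide: they are built from the coordinate functions $x_i(w)$, which depend on the individual parametrizations $\sigma_i$, not merely on the common image. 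The final appeal to ``immersive uniqueness of reparametrization'' is exactly where the arc-length identity $|\sigma_i'(w)|=2|w|$ must enter, and you never invoke it. Once you do, the apparatus of $\varphi_i$, properness, and Proposition~\ref{prop:I2} becomes redundant; the one-line arc-length observation is both the paper's intended argument and the missing step in yours.
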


\subsection{Smoothness of $\psi$ for generalized cuspidal edges.}\
Let $C$ be a curve $C^r$-embedded in $\R^3$ which is not closed.
To prove Theorem B, we consider the following situation:

Let $f:(U;u,v)\to \R^3$ be a $C^r$-map such that 
$U$ contains a closed interval $I\times \{0\}$ on the $u$-axis
in  the $uv$-plane $\R^2$.
We assume that
$I\times \{0\}\,(\subset  U)$ 
consists of  generalized cuspidal edge points.
Without loss of generality, we  may assume that
$\gamma(u)=(u,0)$ $(u\in I)$
giving 
the arc-length parametrization of 
$\hat \gamma:=f\circ \gamma$ 
such that $\hat \gamma(I) \subset C$.
We let $\hat \Pi_{\hat \gamma(u)}$ 
be the normal plane of $f$ for each $u\in I$.
We first prove the following:

\begin{Lemma}\label{lem:00}
There exist $\epsilon(>0)$ and
an embedding
$$
\psi:V\to U \qquad
(V:=I\times [-\epsilon,\epsilon])
$$
satisfying the following properties:
\begin{enumerate}
\item $\gamma(s):=\psi(s,0)$ 
parametrizes the singular set of $f$ such that
$\hat\gamma(s)=f\circ \gamma(s)$ gives
an arc-length parametrization of $C$.
\item for each fixed $(s,0)\in V$, the curve
$\sigma_s:t\mapsto f\circ \psi(s,t)$ parametrizes the section
of the image of $f$ by $\hat \Pi_{\hat \gamma(s)}$ 
such that
$t$ is the half-arc-length parameter
of the curve $\sigma_s$.
\end{enumerate}
\end{Lemma}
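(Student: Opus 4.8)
The plan is to first use the implicit function theorem to deform the $v$-curves so that they lie in the normal planes $\hat\Pi_{\hat\gamma(u)}$, and then to reparametrize each resulting curve by its half-arc-length parameter, checking that this reparametrization depends $C^r$-differentiably on the base point. Since $I\times\{0\}$ consists of generalized cuspidal edge points, we may also assume the coordinates $(u,v)$ are chosen so that $f_v\equiv\vect{0}$ along $\{v=0\}$ (that is, $\partial_v$ is the null direction there); by the normal form $f_0$ in Definition~\ref{def:gC}, the vectors $f_u(s,0)$ and $f_{vv}(s,0)$ are then linearly independent for each $s$.

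First I would introduce
\[
G(u,v,s):=\inner{f(u,v)-\hat\gamma(s)}{\hat\gamma'(s)},
\]
whose zero set (for fixed $s$) is the preimage under $f$ of the normal plane $\hat\Pi_{\hat\gamma(s)}$. As $\hat\gamma$ is arc-length one has $G(s,0,s)=0$ and $G_u(s,0,s)=\inner{f_u(s,0)}{\hat\gamma'(s)}=1$, so the implicit function theorem gives, on a neighborhood of the compact set $I\times\{0\}$, a $C^r$-function $u=u(v,s)$ with $u(0,s)=s$ and $G(u(v,s),v,s)\equiv0$. Since $\partial u(v,s)/\partial s=1$ on $\{v=0\}$, the map $(s,v)\mapsto(u(v,s),v)$ is a $C^r$-diffeomorphism from a neighborhood $V$ of $I\times\{0\}$ onto an open subset of $U$. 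In these new coordinates the curve $\sigma_s(v):=f(u(v,s),v)$ lies in $\hat\Pi_{\hat\gamma(s)}$ and locally parametrizes the section of the image of $f$ by that plane; moreover the singular set is $\{v=0\}$ and $s\mapsto(s,0)$ parametrizes it with $\hat\gamma(s)=f(s,0)$ arc-length, which is property (1).

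Next I would verify that $v=0$ is a generalized cusp of each $\sigma_s$. From $G_v(s,0,s)=\inner{f_v(s,0)}{\hat\gamma'(s)}=0$ we get $u_v(0,s)=-G_v/G_u=0$, hence $\sigma_s'(0)=u_v(0,s)f_u(s,0)+f_v(s,0)=\vect{0}$. Because $\sigma_s$ lies in the plane through $\hat\gamma(s)$ orthogonal to $\hat\gamma'(s)$, the vector $\sigma_s''(0)=u_{vv}(0,s)f_u(s,0)+f_{vv}(s,0)$ equals the component of $f_{vv}(s,0)$ orthogonal to $f_u(s,0)$, which is nonzero by the linear independence noted above. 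Since, in addition, the singular set of $f$ coincides with $I\times\{0\}$ near $I\times\{0\}$, the curve $\sigma_s$ is regular for $v\ne0$ near $0$. Thus $\sigma_s$ is a $C^r$-generalized cusp at $v=0$ depending $C^r$-differentiably on $s$.

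It remains to replace $v$ by the half-arc-length parameter of $\sigma_s$, and this is the step I expect to be the main obstacle. Writing $\sigma_s'(v)=v\,h(v,s)$ with $h(v,s):=\int_0^1\sigma_s''(\tau v)\,d\tau$ (Hadamard's lemma), $h$ is $C^r$ and $h(0,s)=\sigma_s''(0)\ne\vect{0}$, so $|h(v,s)|$ is $C^r$ and positive near $\{v=0\}$. Hence
\[
F(v,s):=\left|\int_0^v|\sigma_s'(\tau)|\,d\tau\right|=\int_0^v\tau\,|h(\tau,s)|\,d\tau
\]
is $C^r$ (the second equality holds because the integrand has the sign of $\tau$), with $F(0,s)=F_v(0,s)=0$ and $F_{vv}(0,s)=|h(0,s)|>0$. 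Applying Hadamard's lemma twice, $F(v,s)=v^2B(v,s)$ with $B$ of class $C^r$ and $B(0,s)=\tfrac{1}{2}|\sigma_s''(0)|>0$, so $\sqrt{B}$ is $C^r$ near $\{v=0\}$; then $t:=w(v,s):=v\sqrt{B(v,s)}$ satisfies $w^2=F$ and $w_v(0,s)=\sqrt{B(0,s)}>0$, so $t$ is the half-arc-length parameter of $\sigma_s$ and $(s,v)\mapsto(s,t)$ is a $C^r$-diffeomorphism near $\{v=0\}=\{t=0\}$. Composing with the diffeomorphism of the second paragraph and shrinking $V$ if necessary gives the desired coordinates $(s,t)$ satisfying (1) and (2). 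The delicate point throughout is extracting $w=v\sqrt{B}$ as a genuinely $C^r$ function of both variables (in particular real-analytic when $r=\omega$); this works precisely because the non-degeneracy $\sigma_s''(0)\ne\vect{0}$ of the generalized cusp keeps $h$, $|h|$ and $B$ nonvanishing along $\{v=0\}$.
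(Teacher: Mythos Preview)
Your argument is correct and takes a somewhat different, more direct route than the paper's. The paper first passes through the right--left equivalence $\Phi\circ f\circ\phi^{-1}=f_0$ to the standard model $f_0(x,y)=(y^2,y^3\alpha(x,y),x)$, then realizes each normal plane $\hat\Pi_{\hat\gamma(s)}$ (after $\Phi$) as a graph $z=g^s(x,y)$ via a tubular neighborhood, and applies the implicit function theorem to $F^s(x,y)=x-g^s(y^2,y^3\alpha(x,y))$ to obtain $x=A^s(y)$; the sectional curve is then explicitly $\hat\sigma^s(y)=(y^2,y^3\alpha(A^s(y),y),A^s(y))$, from which the generalized-cusp property is read off, and the half-arc-length reparametrization is performed. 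You instead bypass $\Phi$ entirely, applying the implicit function theorem directly to $G(u,v,s)=\inner{f(u,v)-\hat\gamma(s)}{\hat\gamma'(s)}$; this is cleaner and keeps everything in the original target space. The price is that the generalized-cusp property $\sigma_s''(0)\ne\vect{0}$ is no longer visible from an explicit formula and must be argued, as you do, from the linear independence of $f_u(s,0)$ and $f_{vv}(s,0)$; that linear independence is indeed an invariant of generalized cuspidal edge points (it survives left equivalence because $f_v(s,0)=\vect{0}$ kills the second-order correction terms). Your treatment of the half-arc-length step via Hadamard's lemma is also more explicit than the paper's, which simply asserts that $(s,y)\mapsto(u,w^s(y))$ is a $C^r$-diffeomorphism; your factorization $F=v^2B$ with $B(0,s)>0$ makes the $C^r$-regularity (including the $r=\omega$ case) transparent.
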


The plane curve $\sigma_s$ lying in the plane
$\hat \Pi_{\hat \gamma(s)}$ 
is called the {\it sectional cusp} at $\hat \gamma(s)$.

\begin{proof}
By the definition of generalized cuspidal edge
(cf. Definition \ref{def:gC}), there exist
\begin{itemize}
\item an open subset $U_1(\subset U)$ in the $uv$-plane
containing $I\times \{0\}$, 
\item a tubular neighborhood
$\Omega(\subset \R^3)$ of the curve  $C$ containing 
the image $f(U_1)$,
\item  
a $C^r$-diffeomorphism
$\phi:U_1\to (\R^2;x,y)$ 
giving a diffeomorphism between $U_1$ and $\phi(U_1)$ and
\item a $C^r$-diffeomorphism 
$\Phi:\Omega\to \Phi(\Omega)$ 
\end{itemize}
such that
\begin{equation}\label{eq:F0F}
\Phi\circ f\circ \phi^{-1}(x,y)=(y^2, y^3\alpha(x,y),x)\,(=:f_0(x,y)),
\end{equation}
where $\alpha$ is a $C^r$-function.
Then $x\mapsto f_0(x,0)$ gives a parametrization of the
image of the singular curve of $f_0$, 
and so we can  write
$$
f_0(x(s),0)=(0,0,x(s)) \qquad (s\in I)
$$
so that $|d\hat\gamma(s)/ds|=1$,
where $x(s)$ is a $C^r$-function satisfying $x'=dx/ds>0$.
By choosing a sufficiently small $\Omega$, we may assume that
the image $\Phi(\hat \Pi_{\hat\gamma(s)})$ ($s\in I$) 
of the normal plane of $f$
is a surface embedded in $\Phi(\Omega)$.
Thus, there exists a 
family of functions $\{g^s(x,y)\}_{s\in I}$
such that 
$$
g^s(0,0)=x(s)
$$
and the graph of the function $z=g^s(x,y)$
gives a local parametrization of  $\Phi(\hat \Pi_{\hat\gamma(s)})$.
Then, the section of the image of $f_0$ by the graph
$z=g^s(x,y)$ corresponds to the image of
the sectional cusp of $f$, 
which can be characterized by
the implicit function $F^s(x,y)=0$,
in the $xy$-plane (as the domain of definition of $g^s$)
by setting
$$
F^s(x,y):=x-g^s(y^2,y^3\alpha(x,y)).  
$$
Since
$
g^s(0,0)=x(s)
$, we have
$F^s(0,0)=0$.
Since the derivative $\partial F^s(0,0)/\partial x$
is equal to $1$,
the implicit function theorem yields that
there exists a $C^r$-function
$
x=A^s(y)
$
of $y$ which  parametrizes the set $F^s=0$,
that is,
$$
A^s(y)=g^u\Big(y^2,y^3\alpha(A^s(y),y)\Big), \qquad
A^0(0)=0
$$
hold, and
$$
\hat \sigma^s(y):=\Big(y^2,y^3 \alpha(A^s(y),y),A^s(y)\Big)
$$
gives a parametrization of the slice of 
$f_0$ by $\Phi(\hat \Pi_{\hat\gamma(s)})$.
Since 
$$
A^s(0)=g^s(0,0)=x(s),
$$
the fact $dx/ds>0$ implies that
$$
\phi_1:I\times (-\epsilon_1,\epsilon_1)\ni (s,y) \mapsto (A^s(y),y)\in \R^2
$$
is a $C^r$-diffeomorphism into the 
$xy$-plane for sufficiently small $\epsilon_1>0$, 
and the parameters $s,y$
give a  new local coordinate system of the $xy$-plane
at $(0,0)$.

Computing the derivatives of the curves 
$y\mapsto \hat \sigma^s(y)$,
we have (${}':=d/dy$)
\begin{equation}\label{eq:sigma}
(\hat \sigma^s)'(0)=\mb 0,\qquad 
(\hat \sigma^s)''(0)=(2,0,*),
\end{equation}
where $*$ means a certain value, which is not required in the
later discussions.

By setting
$
\sigma^s(y):=\Phi^{-1}\circ \hat \sigma^s(y),
$
the formula
\eqref{eq:F0F} implies that
$\sigma^u$ parametrizes the section of $f$ by 
the normal plane $\hat \Pi_{s}$.
By \eqref{eq:sigma}, $y\mapsto \sigma^s(y)$ gives the
generalized cusp at $y=0$
as the section of $f$ by the normal plane $\hat \Pi_{\hat \gamma(s)}$. 
If we set
$$
w^s(y):=\op{sgn}(y)\sqrt{|B^s(y)|},
\qquad B^s(y):=\int_{0}^y |(\sigma^s)'(t)|dt,
$$
then 
$$
\phi_2:I\times (-\epsilon_2,\epsilon_2)\ni (s,y)\mapsto (s,w^s(y))\in \R^2
$$ 
is a $C^r$-diffeomorphism into the 
$st$-plane for sufficiently small $\epsilon_2\in (0,\epsilon_1)$, 
and $t:=w^s(y)$
is the  half-arc-length parametrization of $\sigma^s$.
So if we consider the $C^r$-map given by
$$
\psi(s,t):=\phi^{-1}\circ \phi_1 \circ \phi_2^{-1}(s,t),
$$ 
then  $\psi$ gives the desired parametrization of $f$
defined on $I\times [-\epsilon,\epsilon]$ for 
sufficiently small $\epsilon>0$.
\end{proof}

\begin{proof}[Proof of Theorem B]
Without loss of generality,
we may assume that there exists a closed interval $I_i$ ($i=1,2$)
in $\R$
and
$$
I_i\ni u_i \mapsto f_i(u_i,0)\in \R^3
$$
gives the arc-length parametrization of $C$.
By replacing $u_2$ by $-u_2$,
we may assume that these two parametrizations of $C$
give the same orientation.
Then we may also assume that $I_1=I_2(=I)$ and
$$
\gamma(s):=f_1(s,0)=f_2(s,0).
$$
By Lemma \ref{lem:00}, for each $i=1,2$,
there exist a positive number $\epsilon_i$
and an embedding
$$
\psi_i:I\times [-\epsilon_i,\epsilon_i]\ni (s_i,t_i)\mapsto 
\psi_i(s_i,t_i)\in U_i
$$
such that
$(s_i,t_i)$ satisfies (1) and (2) of Lemma \ref{lem:00}.
We set $g_i:=f_i\circ \psi_i$ for $i=1,2$.
Since $f_1(U_1)\subset f_2(U_2)$,
we may assume that $\epsilon_1\le \epsilon_2$.
We denote by $\hat \Pi_{s}$ the normal plane of the curve $\gamma$ at $\gamma(s)$.
Since $t$ is the half-arc-length parameter of
each section of $g_i$ ($i=1,2$)
by the plane $\hat \Pi_{s}$,
Proposition \ref{prop:sigma}
yields that
$$
g_1(s,t)=g_2(s,e(s) t)\qquad 
(e(s) \in \{+,-\})
$$
holds at each point $(s,t)\in I\times [-\epsilon_1,\epsilon_1]$,
where $e(s)$ is a sign depending on $s$.
By the continuity of $g_1$ and $g_2$,
we can conclude that $e:=e(s)$ does not depend 
on $s$. 
In particular, the unit normal vector field
of $g_1$ coincides with that 
of $g_2$ up to a sign.
If we set
$
\phi(s,t):=(s,et),
$
then  
$
f_1\circ \psi_1=f_2\circ \psi_2\circ \phi
$ 
holds, proving the assertion.

We next show the last assertion.
Let $U_i$ $(i=1,2)$ be a 
neighborhood of $p_i\in \R^{2}$ and
$f_i:U_i\to \R^{3}$ a $C^r$-frontal map
so that $p_i$ is a generalized cuspidal
edge point satisfying 
the conditions (a1)--(a4) in Theorem A.
By Theorem A,
there exists a homeomorphism $\psi:V_1\to V_2$
between certain connected neighborhoods 
$V_i(\subset U_i)$  $(i=1,2)$
of $p_i$ satisfying 
$f_1=f_2\circ \psi$ 
and $\nu_1=\pm \nu_2\circ \psi$ on $V_1$.
There exists a regular curve
$\gamma:[-\epsilon,\epsilon]\to V_1$
so that $\gamma([-\epsilon,\epsilon])$
consists of generalized cuspidal
edge points.
Since $f_2(V_2)=f_1(V_1)$, $f_1$ and $f_2$
are both considered as a generalized cuspidal
edge along the same space curve 
$C:=f_1\circ \gamma([-\epsilon,\epsilon])$.
If we choose sufficiently small
$\epsilon(>0)$, $C$ is an embedding.
So we can apply the first part of Theorem B,
and can conclude that $\psi$ is a $C^r$-diffeomorphism.
\end{proof}

\section{Proof of Theorem C}
In this section, we prove Theorem C.

\subsection{Proof of the first part of Theorem C}
Let $f:(U;u,v)\to \R^{3}$ be a $C^r$-frontal map
and $\nu$ a unit normal vector field of $f$.
A singular point $p\in U$ of $f$ is
said to be {\it non-degenerate} if
the exterior derivative $d\lambda$
does not vanish at $p$, where
$$
\lambda:=\det(f_u,f_v,\nu).
$$
Cuspidal edges, swallowtails and cuspidal cross caps
are non-degenerate singular points on frontal maps.

We denote by $\Sigma(f)$ the singular set of $f$.
We consider the case that
$p$ is a non-degenerate singular point.
By the implicit function theorem,
there exists a regular curve $\gamma(t)$ parametrizing 
$\Sigma(f)$ near $p$ such that
$\gamma(0)=p$. 
This curve $\gamma$ is called the {\it singular curve}
and $\gamma'(0)(\ne \mb 0)$ is called the {\it singular direction} at $p$.
A non-zero tangent vector $\mb v\in T_pU$
is called a {\it null vector} of $f$ at $p$ 
if $df_p(\mb v)$ vanishes. 
Then  $p$ is called  {\it type I},
if the null-vector $\mb v$ at $p$
is linearly independent of $\gamma'(0)$.
Otherwise, $p$ is called {\it type II}.

Generalized cuspidal edges are all 
non-degenerate singular points of type I.
(In particular, cuspidal edges and cuspidal cross caps
are of type I.)
On the other hand, swallowtails are
of type II.
If $p$ is of type I, then the {\it limiting normal curvature} at $p$
is given by
(cf. \cite{MSUY})
\begin{equation}\label{eq;kappa}
\kappa_\nu(p):=\frac{\hat \gamma''(0)\cdot \nu(p)}{\hat{\gamma'}(0)
\cdot {\hat\gamma'}(0)},
\end{equation}
where $\hat \gamma(t):=f\circ \gamma(t)$.
Here, we discuss symmetries  of the standard 
cuspidal edge, swallowtail and cuspidal cross cap. 

\begin{Example}
The images of the standard cuspidal edge $f_C$ and
the standard cuspidal cross cap $f_{CW}$
(cf.~\eqref{eq:std})
are both invariant under two orthogonal transformations
fixing the origin corresponding to the following 
orthogonal matrices:
$$
T_1=\left(
\begin{array}{ccc}
 1 & 0 & 0 \\
 0 & 1 & 0 \\
 0 & 0 & -1 \\
\end{array}
\right),
\qquad
T_2:=\left(
\begin{array}{ccc}
 1 & 0 & 0 \\
 0 & -1 & 0 \\
 0 & 0 & 1 \\
\end{array}
\right).
$$
Here 
\begin{itemize}
\item $T_1$ is 
the reflection with respect to
the normal plane $\Pi_1$,
\item $T_2$ is the reflection with respect to the
limiting tangent plane $\Pi_0$, and
\item  
$T_3:=T_1\circ T_2$
is the $180^\circ$-rotation with respect to 
the co-normal line $l_2$.
\end{itemize}
\end{Example}

\begin{Example}
The image of the standard swallowtail $f_{S}$ 
(cf.~\eqref{eq:std})
is invariant under an orthogonal transformation
fixing the origin corresponding the  
orthogonal matrix $T_2$, which is
the reflection with respect to
the co-normal plane $\Pi_2$.
\end{Example}
\noindent

\begin{proof}[Proof of the first part of Theorem C]
Let $f:U\to \R^3$ be as in Theorem C.
Then
we can apply Theorem A
to the maps $f$ and $T\circ f$,
and there exists a local homeomorphism $\psi$
satisfying
\begin{equation}\label{eq:f-nu}
f\circ \psi=T\circ f,\qquad \nu\circ \psi=e T\circ \nu, \qquad
\psi(p)=p, \qquad T\circ f(p)=f(p),
\end{equation}
where $\nu$ is the unit normal vector field of $f$
and $e\in \{+,-\}$.
If 
$p\in U$ is a cuspidal edge or
a swallowtail, then, by Theorem A,  $\psi$
is a local $C^r$-diffeomorphism, because cuspdial edges
and swallowtails are wave fronts.
On the other hand, if
$p$ is
a cuspidal cross cap,
then, by Theorem B,
we can conclude that
$\psi$ is also a local $C^r$-diffeomorphism.

Without loss of generality, we may set
$f(p)=(0,0,0)$ and $T$ is an orthogonal matrix.
We can take a local coordinate system $(u,v)$ centered
at $p$ such that $f_v(p)=\mb 0$ and $f_u(p)\ne \mb 0$. 
Since $f\circ \psi=T\circ f$, the vector
$f_u(p)$ is an eigenvector of $T$.
Since $T\circ f(p)=f(p)=(0,0,0)$ and $\psi(p)=p$,
the second formula of
\eqref{eq:f-nu} implies that
$T\nu(p)=\pm \nu(p)$,
that is, $\nu(p)$ is also an eigenvector of $T$.
We consider the vector
$$
\vect{w}:=f_u(p)\times \nu(p),
$$
which points in the co-normal direction.
Since $f_u(p)$ and $\nu(p)$ are eigenvectors,
$\vect{w}$ is also an eigenvector of $T$.
Thus, we can write
$$
Tf_v(p)=\lambda_1 f_v(p),\quad T\nu(p)=\lambda_2 \nu(p),
\quad T\vect{w}=\lambda_3 \vect{w},
$$
where $\lambda_i\in \{1,-1\}$ ($i=1,2,3$).
Thus, all eigenvalues of $T^2$ are equal to $1$.
Since $T^2$ is an orthogonal matrix,
it must be the identity matrix, that is, $T$ is 
an involution.

By \eqref{eq:f-nu}, we have
$$
f\circ \psi\circ \psi=T\circ f\circ \psi=T^2\circ f=f.
$$
Since cuspidal edge has no self-intersections and
the self-intersection set of cuspidal cross caps
and swallowtails have
no interior points, we can conclude that $\psi$
is a $C^r$-involution.
Moreover, if $\psi$ is an identity map, then
the fact that $T$ is not the identity map implies that
the image of $f$ lies in a plane, which is a contradiction.
So $\psi$ is a non-trivial involution, that is, it is not
the identity map.
\end{proof}

\begin{Lemma}\label{lem:R}
Let $f:U\to \R^3$ be a $C^r$-differentiable
generalized cuspidal
edge singular point $p$.
Suppose that $T$ is
an isometry of $\R^3$ fixing $f(p)$
such that $T\circ f(V)\subset f(U)$ for 
a neighborhood $V(\subset U)$ of $p$.
Then the co-normal vector of $f$ at $p$
is a $1$-eigenvector of $T$. 
In particular, the case {\rm (iii)} of Theorem C never happens. 
\end{Lemma}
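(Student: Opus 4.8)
The plan is to extract the claim from a single second-order jet identity, in the same spirit as the proof of the first part of Theorem~C. Since $T\circ f(V)=f(U)$, the maps $f$ and $T\circ f$ are $C^r$-differentiable generalized cuspidal edges along a common embedded curve (the image of the singular curve of $f$), so Theorem~B applies to the pair $(f,T\circ f)$; arguing exactly as in the proof of the first part of Theorem~C, but with Theorem~B in place of Theorem~A so that the connecting map is genuinely $C^r$, we may take a $C^r$-involution $\psi$ defined near $p$ with
\[
  \psi(p)=p,\qquad f\circ\psi=T\circ f,\qquad \nu\circ\psi=\pm\,T\circ\nu ,
\]
where (after translating $f(p)$ to the origin, so that $T$ is an orthogonal matrix) $T$ is an involution for which $f_u(p)$, $\nu(p)$ and the co-normal vector $\vect{w}:=f_u(p)\times\nu(p)$ are eigenvectors, say $T f_u(p)=\lambda_1 f_u(p)$, $T\nu(p)=\lambda_2\nu(p)$ and $T\vect{w}=\lambda_3\vect{w}$ with $\lambda_i\in\{1,-1\}$. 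It then suffices to prove $\lambda_3=1$: the reflection with respect to the co-normal plane $\Pi_2$ sends the co-normal vector $\vect{w}$ (the normal direction of $\Pi_2$) to $-\vect{w}$, so $T$ cannot be such a reflection, and case {\rm (iii)} of Theorem~C is excluded.

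Next I would fix a local coordinate system $(u,v)$ centered at $p$ for which the singular set is $\{v=0\}$ and $\partial_v$ spans the null direction, so that $f_v(p)=\vect{0}$ and $f_u(p)\ne\vect{0}$. Differentiating the identity $\inner{f_v}{\nu}\equiv0$ once in $v$ and evaluating at $p$ (where $f_v(p)=\vect{0}$) gives $\inner{f_{vv}(p)}{\nu(p)}=0$, so $f_{vv}(p)$ lies in the plane $\nu(p)^{\perp}$, which has orthogonal basis $\{f_u(p),\vect{w}\}$. Moreover $f_{vv}(p)$ is not a multiple of $f_u(p)$: by Lemma~\ref{lem:00} the section of the image of $f$ by the normal plane $\Pi_1$ at $f(p)$ is a genuine generalized cusp, so its second derivative at $p$ is nonzero, perpendicular to $f_u(p)$, and equal to $c^2 f_{vv}(p)$ plus a multiple of $f_u(p)$ for some $c\ne0$; this forces the $\vect{w}$-component of $f_{vv}(p)$ to be nonzero. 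Hence we may write $f_{vv}(p)=a\,f_u(p)+\beta\,\vect{w}$ with $\beta\ne0$.

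Finally I would differentiate $f\circ\psi=T\circ f$ twice in the $v$-direction at $p$. Since $\psi$ maps the singular set $\{v=0\}$ to itself and preserves the null direction at $p$, and $\psi(p)=p$, the differential $d\psi_p$ is diagonal in $(u,v)$; writing $\psi=(\psi^1,\psi^2)$ this means $\psi^1_v(p)=0$ and $\psi^2_v(p)=b$, and $b^2=1$ because $\psi$ is an involution. Using $\psi^1_v(p)=0$, $\psi^2_v(p)=b$, $f_v(p)=\vect{0}$ and $b^2=1$, the second $v$-derivative of $f\circ\psi=T\circ f$ at $p$ collapses to
\[
  \psi^1_{vv}(p)\,f_u(p)+f_{vv}(p)=T\,f_{vv}(p).
\]
Substituting $f_{vv}(p)=a\,f_u(p)+\beta\,\vect{w}$ together with $T f_u(p)=\lambda_1 f_u(p)$ and $T\vect{w}=\lambda_3\vect{w}$, and comparing $\vect{w}$-components (recall $f_u(p)\perp\vect{w}$), we obtain $\beta=\lambda_3\beta$, hence $\lambda_3=1$. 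Thus the co-normal vector is a $1$-eigenvector of $T$, which proves the Lemma.

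I expect the only genuinely non-formal step to be the assertion that the $\vect{w}$-component $\beta$ of $f_{vv}(p)$ does not vanish; this is where the hypothesis that $p$ is a generalized cuspidal edge is really used, via Lemma~\ref{lem:00} (equivalently via the normal form $f_0(u,v)=(v^2,v^3\alpha,u)$, for which $f_{vv}$ at the origin is parallel to the first coordinate axis). Everything else is routine jet bookkeeping and parallels the argument already given for the first part of Theorem~C.
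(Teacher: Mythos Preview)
Your argument is correct. Both the paper and you first invoke Theorem~B and the proof of the first part of Theorem~C to reduce to the orthogonal involution $T$ with eigenvectors $f_u(p),\nu(p),\vect{w}$, and both identify the co-normal direction with the second derivative of $f$ in the null direction (the paper via $\partial^2 f/\partial t^2$ in the Lemma~\ref{lem:00} chart, you via $f_{vv}(p)$ in an adapted chart, using the same lemma to check the $\vect{w}$-component is nonzero). The genuine difference is the final step: the paper argues \emph{geometrically} that the sectional cusp $\sigma_0$ lies in a fixed half of the normal plane determined by $\vect{w}$, and since $T$ preserves the image of $f$ it must preserve that half-plane, forcing $T\vect{w}=\vect{w}$; you instead differentiate the \emph{parametrized} identity $f\circ\psi=T\circ f$ twice in $v$ and read off $\lambda_3\beta=\beta$ from the $\vect{w}$-component. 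Your route is a clean 2-jet bookkeeping that avoids any appeal to how the image sits relative to $\Pi_2$; the paper's route is shorter once one is already in the Lemma~\ref{lem:00} chart and makes the geometric reason (one-sidedness of the cusp) explicit. Incidentally, you do not actually need $b^2=1$: from $b^2\beta=\lambda_3\beta$ with $b\ne0$ real and $\lambda_3\in\{\pm1\}$ one gets $\lambda_3=b^2>0$, hence $\lambda_3=1$ directly.
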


\begin{proof}
Without loss of generality, we may set
$f(p)=(0,0,0)$ and $T$ is an orthogonal matrix.
The above proof of the first part of
Theorem C can apply for generalized cuspidal edges
(cf.\ Theorem~B)
and can 
conclude that the co-normal vector $\mb v$ at $p$
is a $(\pm 1)$-eigenvector of $T$.
Without loss of generality, we may assume that
$f(s,t)$ is the parametrization of $f$
given in Lemma \ref{lem:00}.
Then $\mb v:=\partial^2 f(0,0)/\partial t^2$ points 
in the co-normal direction
that the generalized cusp 
$\sigma_0(t)$ lies in.
So we can conclude that $\mb v$
is a $1$-eigenvector.
\end{proof}

\begin{Proposition}\label{thm:main1}
Let $p\in U$ be a generalized cuspidal edge
singular point of a $C^r$-map $f:U\to \R^3$.
Suppose that there exist an isometry $T$ of $\R^3$ fixing $f(p)$
and a neighborhood $V$ of $p$ such that 
$T\circ f(V)\subset f(U)$.
If $T$ is not the identity map, then 
one of the following two cases occurs:
\begin{enumerate}
\item $T$ is the reflection with respect to 
the limiting tangent plane $\Pi_0$ at $p$,
and the singular set image of $f$ lies in $\Pi_0$.
Moreover, the limiting normal curvature of $f$ 
vanishes at $p$.  
\item $T$ is the reflection with respect to the normal plane $\Pi_1$
or the $180^\circ$-rotation with respect to the co-normal 
line $l_2:=\Pi_0\cap \Pi_1$ at $p$.
In addition, the connecting map $\psi$ is 
a $C^r$-involution interchanging the orientation of the singular curve. 
\end{enumerate}
\end{Proposition}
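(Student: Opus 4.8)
The plan is to rerun the argument from the proof of the first part of Theorem~C — which is available for generalized cuspidal edges via Theorem~B, exactly as in the proof of Lemma~\ref{lem:R} — to obtain the connecting map and to pin $T$ down up to its action on three eigendirections, and then to split off the two cases using the sign pattern of the eigenvalues together with the behaviour of the connecting map along the singular curve. First I would normalize so that $f(p)=\vect 0$ with $T$ orthogonal, and choose a coordinate system $(u,v)$ centered at $p$ with $f_v(p)=\vect 0$, so that $\Sigma(f)$ near $p$ is $\{v=0\}$, $\gamma(u)=(u,0)$ parametrizes it, and $\hat\gamma:=f\circ\gamma$ is a regular curve with $\hat\gamma'(0)=f_u(p)$ (regularity being the type~I property enjoyed by generalized cuspidal edges). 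Applying Theorems~A and~B to the pair $(T\circ f,f)$ and shrinking, one gets a neighborhood $W$ of $p$ and a $C^r$-diffeomorphism $\psi:W\to W$ with $\psi(p)=p$, $f\circ\psi=T\circ f$, $\nu\circ\psi=\pm T\circ\nu$ and $T\circ f(W)=f(W)$. As in the proof of the first part of Theorem~C, $f_u(p)$, $\nu(p)$ and $\vect w:=f_u(p)\times\nu(p)$ are then eigenvectors of $T$ with eigenvalues $\lambda_1,\lambda_2,\lambda_3\in\{\pm1\}$, so $T^2=\op{id}$, and $f\circ\psi^2=f$ together with the self-intersection set of $f$ having no interior forces $\psi$ to be a $C^r$-involution. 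Finally, by Lemma~\ref{lem:R} (whose hypotheses hold here after shrinking), $\vect w$ is a $1$-eigenvector, i.e.\ $\lambda_3=+1$.

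Next I would classify $T$ and relate it to $\gamma$. Since $\Pi_0=f(p)+\op{span}\{f_u(p),\vect w\}$, $\Pi_1=f(p)+\op{span}\{\nu(p),\vect w\}$, $l_2=f(p)+\R\vect w$ and $\Pi_2=f(p)+\op{span}\{f_u(p),\nu(p)\}$, an orthogonal involution with $\lambda_3=+1$ is the identity if $(\lambda_1,\lambda_2)=(+1,+1)$, the reflection with respect to $\Pi_0$ if $(\lambda_1,\lambda_2)=(+1,-1)$, the reflection with respect to $\Pi_1$ if $(\lambda_1,\lambda_2)=(-1,+1)$, and the $180^\circ$-rotation about $l_2$ if $(\lambda_1,\lambda_2)=(-1,-1)$ (the reflection with respect to $\Pi_2$, corresponding to $\lambda_3=-1$, being already excluded by Lemma~\ref{lem:R}). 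Moreover $\psi$ maps $\Sigma(f)$ onto itself: from $df_{\psi(q)}\circ d\psi_q=dT\circ df_q$ and the invertibility of $d\psi_q$ and $dT$ one gets $\op{rank}\,df_{\psi(q)}=\op{rank}\,df_q$. Hence $\psi\circ\gamma=\gamma\circ\eta$ for a $C^r$-involution $\eta$ of an interval about $0$ fixing $0$; differentiating $\hat\gamma\circ\eta=T\circ\hat\gamma$ at $0$ and using $T\hat\gamma'(0)=Tf_u(p)=\lambda_1 f_u(p)=\lambda_1\hat\gamma'(0)$ gives $\eta'(0)=\lambda_1$.

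It then remains to treat the two cases. If $\lambda_1=+1$, then $\eta'(0)=+1$; since an orientation-preserving $C^r$-involution of an interval fixing an interior point is the identity, $\eta=\op{id}$ near $0$, so $\psi$ fixes $\gamma$ pointwise and $\hat\gamma(u)=f(\psi(\gamma(u)))=T(\hat\gamma(u))$. Thus the singular image lies in the fixed-point set of $T$, which (since $T\ne\op{id}$ forces $\lambda_2=-1$, and $\lambda_3=+1$) is exactly $\Pi_0$; as $\Pi_0$ is an affine plane parallel to $\nu(p)^\perp$, we get $\hat\gamma''(0)\cdot\nu(p)=0$, hence $\kappa_\nu(p)=0$ by \eqref{eq;kappa}. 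This is case~(1). If $\lambda_1=-1$, then $\eta'(0)=-1$, so $\psi$ interchanges the orientation of the singular curve, while $(\lambda_2,\lambda_3)\in\{(+1,+1),(-1,+1)\}$ identifies $T$ with the reflection with respect to $\Pi_1$ or the $180^\circ$-rotation about $l_2$; this is case~(2). I expect the only genuinely delicate point to be verifying the applicability of Theorem~B — that $T\circ f$ is a generalized cuspidal edge along the \emph{same} embedded curve $C$, which holds because the singular image of $T\circ f$ is carried into the image of $f$ and singular points of the image must go to singular points — so that $\psi$ is honestly $C^r$ and not merely a homeomorphism; everything else is bookkeeping with the eigenvectors of $T$ and the one-variable involution $\eta$.
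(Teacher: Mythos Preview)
Your argument is correct and follows the same overall route as the paper: obtain a smooth connecting map via Theorem~B, show that $f_u(p)$, $\nu(p)$, and the co-normal $\vect w$ are $\pm1$-eigenvectors of $T$, invoke Lemma~\ref{lem:R} to force $\lambda_3=+1$, and then split into two cases according to whether $\psi$ preserves or reverses the orientation of the singular curve. The paper works in the special coordinates of Lemma~\ref{lem:00}, where (by the proof of Theorem~B) $\psi$ has the explicit form $(s,t)\mapsto(e_1s,e_2t)$, so the case split is simply on the sign $e_1$; you instead argue abstractly via the rank of $df$ and the one-variable involution $\eta$, arriving at the same dichotomy through $\eta'(0)=\lambda_1$. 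One place where your version is cleaner: for the vanishing of $\kappa_\nu(p)$ in case~(1), the paper appeals to the sign behavior of the Gaussian curvature on either side of the singular curve (citing \cite{MSUY} and \cite[Corollary~1.16]{HNUY}) to derive a contradiction, whereas your observation that $\hat\gamma\subset\Pi_0\perp\nu(p)$ immediately gives $\hat\gamma''(0)\cdot\nu(p)=0$ is direct and self-contained.
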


\begin{proof}
We may assume that $f(s,t)$
is the parametrization of $f$ as in
Lemma \ref{lem:00}.
Since $s$ is the arc-length parametrization of $C$
and $t$ is the half-arc-length parametrization of each 
sectional cusp,
as seen in the proof of Theorem B,
there exists a local $C^r$-diffeomorphism $\psi$ on a neighborhood of $p$
such that $T\circ f\circ \psi=f$ and
$$
\psi(s,t)=(e_1s,e_2t), 
$$
where $e_1,\,\, e_2\in \{+,-\}$.
Then
$
\gamma(s):=(s,0)
$
parametrizes the singular set of $f$.
By Proposition \ref{thm:main1},
the co-normal direction of $f$ at $p$
is a $1$-eigenvector of $T$.
Since $p$ is of type I, $\gamma'(0)$ is linearly
independent of the null-direction of $f$ at $p$.
We set $\hat \gamma(s):=f\circ \gamma(s)$.

We first consider the case that $e_1=+$.
In this case, we have 
$$
T\circ f\circ \gamma(s)=f\circ \psi\circ \gamma(s)
=f\circ \gamma(s), 
$$
that is, the singular points of $f$ are fixed by $T$.
In particular, the tangential direction $\hat\gamma'(0)$ 
is the $1$-eigenvector of $T$.
Since the co-normal direction at $p$
is also a $1$-eigenvector of $T$ (cf.\ Lemma \ref{lem:R}), 
the limiting tangent plane $\Pi_0$ is contained in the
fixed point set of $T$.
Since $T$ is not the identity map, $T$ must be
the reflection with respect to the limiting tangent plane $\Pi_0$.
In this situation, if the limiting normal curvature at $\gamma(s)$
does not vanish, then the Gaussian curvature takes opposite sign on 
the  two
sides of $\gamma$ (cf. \cite{MSUY} or \cite[Proposition 4]{HNUY}),  
which contradicts that  
$T$ is the reflection with respect to $\Pi_0$.
So the limiting normal curvature vanishes identically along $\gamma$.
This is the case (1).

We next consider the case that 
$e_1=-$, that is, the case that the local $C^r$-diffeomorphism $\psi$ is 
reversing the orientation of the singular curve. 
This is the case (2).
If we set $\hat \gamma:=f\circ \gamma$,
then we have $T \hat \gamma'(0)=-\hat \gamma'(0)$.
Since  the co-normal direction at $p$
is a $1$-eigenvector of $T$, 
if $\nu(p)$ is a $(-1)$-eigenvector of $T$, then 
$T$ is the $180^\circ$-rotation with respect to the co-normal line.
On the other hand, if $\nu(p)$ is a 
$1$-eigenvector of $T$, then 
$T$ is the reflection with respect to the normal plane $\Pi_1$. 
\end{proof}

\subsection{Symmetries of cuspidal edges and cuspidal cross caps}
For cuspidal edges and
cuspidal cross caps,
we can prove the following:

\begin{Proposition}\label{cor:main1} 
Let $f:U\to \R^3$ be a $C^r$-map defined on
a non-empty open subset of $\R^2$, and let
$p\in U$ be 
a cuspidal edge or
a cuspidal cross cap singular point.
Suppose that
\begin{itemize}
\item the limiting normal curvature $\kappa_\nu$
does not vanish at $p$, and 
\item 
there exists an isometry $T$ of $\R^3$ fixing $f(p)$
such that $T\circ f(V)\subset f(U)$ and $T$ is not the identity map,
where $V$ is an open neighborhood $V(\subset U)$ of $p$.
\end{itemize}
Then $T$ must be the reflection with respect to 
the normal plane $\Pi_1$,
and there exists a local $C^r$-diffeomorphism $\psi$ 
$($determined by Theorems A and B$)$
satisfy the following:
\begin{enumerate}
\item 
if $p$ is cuspidal edge singular point, then
$\psi$ is an  orientation reversing $C^r$-involution
which reverses the orientation of the singular curve,
\item
if $p$ is a cuspidal cross cap, then
$\psi$ is an orientation preserving $C^r$-involution 
which reverses the orientation of the singular curve
at $p$. Moreover,
each point of
the image of the set of self-intersections
is fixed by $T$ and is lying 
in the normal plane $\Pi_1$ near $f(p)$.
\end{enumerate}
\end{Proposition}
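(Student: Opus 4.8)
The plan is to apply Proposition~\ref{thm:main1} to the present situation and then use the more specific structure of cuspidal edges and cuspidal cross caps—both of which are generalized cuspidal edge singular points of type~I—to eliminate the case~(1) of that proposition and to pin down the parity of $\psi$. First I would invoke Theorems~A and~B (the latter being applicable since a cuspidal cross cap is a generalized cuspidal edge, and a cuspidal edge is a wave front) to produce the local $C^r$-diffeomorphism $\psi$ with $f\circ\psi=T\circ f$, and then put $f$ in the normal form $f(s,t)$ of Lemma~\ref{lem:00}, so that $\psi(s,t)=(e_1 s, e_2 t)$ with $e_1,e_2\in\{+,-\}$. By Lemma~\ref{lem:R} the co-normal vector at $p$ is a $1$-eigenvector of $T$ in all cases, so case~(iii) of Theorem~C is already excluded; what remains is to decide between (i), (ii), (iv) and to determine $e_1,e_2$.

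For claim~(1), I would argue that non-vanishing limiting normal curvature forces $e_1=-$: if $e_1=+$, then the singular curve is pointwise fixed by $T$, so $\hat\gamma'(0)$ is a $1$-eigenvector, and together with the co-normal $1$-eigenvector this would make $T$ the reflection in $\Pi_0$ (case~(i)); but the argument in the proof of Proposition~\ref{thm:main1}—that a reflection in $\Pi_0$ cannot preserve the image when the Gaussian curvature changes sign across $\gamma$, which is exactly what happens when $\kappa_\nu(p)\neq 0$ (see \cite{MSUY} and \cite[Corollary~1.16]{HNUY})—rules this out. Hence $e_1=-$, so by Proposition~\ref{thm:main1}(2) $T$ is either the reflection in $\Pi_1$ or the $180^\circ$-rotation about $l_2$. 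To exclude the rotation I would observe that a cuspidal edge or cuspidal cross cap point has a well-defined cuspidal direction (the bisector of the sectional cusp), which points into the half-plane where the surface lies; this direction is $\partial^2 f(0,0)/\partial t^2$, a $1$-eigenvector of $T$ as in Lemma~\ref{lem:R}, and since $e_2$ could in principle be either sign the sectional cusp is preserved as a set—but a $180^\circ$-rotation about $l_2$ sends $\nu(p)$ to $-\nu(p)$, which is incompatible with preserving the co-orientation data recorded by the Legendrian lift along the singular curve (for cuspidal edges this is just that the rotation reverses the cuspidal direction's companion normal, contradicting image-preservation of the front). Therefore only the reflection in $\Pi_1$ survives, giving~(1).

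For claims~(2) and~(3), having established (once $\kappa_\nu\neq0$, and in general by the same normal-form analysis) that $e_1=-$—i.e.\ $\psi$ reverses the orientation of the singular curve—I would compute the sign of $\det(d\psi)=e_1 e_2=-e_2$ and tie $e_2$ to the type of singularity via the standard models $f_C$ and $f_{CW}$ of~\eqref{eq:std}. For the standard cuspidal edge the nontrivial symmetries correspond to $T_1,T_2,T_3$, and one checks directly that the reflection $T_1$ in $\Pi_1$ is realized by $(u,v)\mapsto(-u,v)$, which is orientation-reversing on $\R^2$; by the rigidity of the normal form this shows $\psi$ is an orientation-reversing $C^r$-involution reversing the singular curve, which is~(2). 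For the standard cuspidal cross cap $f_{CW}(u,v)=(v^2,uv^3,u)$ the reflection in $\Pi_1$ is realized by $(u,v)\mapsto(-u,-v)$, which is orientation-preserving on $\R^2$ while still reversing the orientation of the $u$-axis, giving the first half of~(3); the statement that $\psi$ is an involution follows from $T^2=\op{id}$ (Theorem~C, first part) and the fact that the self-intersection set has empty interior. Finally, for the self-intersection claim in~(3) I would use Theorem~C(c2): each $q$ in the self-intersection set $S$ of $f_{CW}$ satisfies $f\circ\psi(q)=f(q)$, and since $T\circ f=f\circ\psi$ this gives $T(f(q))=f(q)$, so $f(S)$ is fixed by $T$; as $T$ is the reflection in $\Pi_1$, its fixed set near $f(p)$ is $\Pi_1$, so $f(S)$ lies in $\Pi_1$ near $f(p)$.

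The main obstacle I expect is the clean exclusion of the $180^\circ$-rotation about $l_2$ (case~(iv)) and of the $\Pi_0$-reflection (case~(i)) in the non-zero-$\kappa_\nu$ setting for cuspidal cross caps specifically: for cuspidal edges the wave-front structure and \cite[Theorem~5.1]{HNSUY} already cover this, but for cuspidal cross caps one must be careful that $\psi$ is only guaranteed smooth via Theorem~B (not Theorem~A), and one must correctly track how $\psi(s,t)=(-s,e_2 t)$ interacts with the sign $e$ in $\nu\circ\psi=eT\circ\nu$ and with the half-arc-length structure of the sectional cusp, to be sure the rotation really is incompatible with $T\circ f(V)\subset f(U)$ rather than merely with equality of images. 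Beyond that, the remaining steps are the routine normal-form computations with $f_C$ and $f_{CW}$ to read off the parities, which I would not spell out in full.
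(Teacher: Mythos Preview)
Your overall architecture matches the paper: reduce to Proposition~\ref{thm:main1}, so that $\psi(s,t)=(e_1 s,e_2 t)$ in the coordinates of Lemma~\ref{lem:00}, rule out $e_1=+$ (the $\Pi_0$-reflection) via the Gaussian-curvature sign flip when $\kappa_\nu(p)\neq 0$, and then pin down $e_2$ and the eigenvalue of $\nu(p)$. The difficulties you flag at the end are exactly the right ones, but two of the mechanisms you propose for resolving them do not work.

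\medskip
\textbf{Determining $e_2$ by the standard models is not valid.} You write that one ``checks directly'' on $f_C$ and $f_{CW}$ and then appeals to ``rigidity of the normal form''. The right--left equivalence $\Phi\circ f\circ\phi^{-1}=f_C$ does \emph{not} carry the isometry $T$ to an isometry of $\R^3$, so the symmetry $\psi$ of a general $f$ is not conjugate (through $\phi$) to a symmetry of $f_C$ induced by an \emph{isometry}; hence the parity of $\psi$ cannot be read off from $T_1,T_2,T_3$. The paper instead uses intrinsic invariants of $f$ that $\psi$ must preserve: with $\kappa_\nu(p)\neq 0$, the Gaussian curvature of a cuspidal edge changes sign across the singular curve (\cite[Corollary~1.16]{HNUY}), so $\psi(u,w)=(-u,-w)$ is impossible and $e_2=+$; for a cuspidal cross cap the sign pattern is different (\cite[Corollary~1.18]{HNUY}) and forces the opposite conclusion $e_2=-$. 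This Gaussian-curvature bookkeeping is the actual engine behind (2) and the first half of (3), and it is also what the paper uses to exclude the $180^\circ$-rotation about $l_2$: once $e_2$ is fixed, one checks that $T\nu(p)=-\nu(p)$ would make $T$ swap regions of opposite Gaussian-curvature sign, which is impossible for an isometry. Your proposed exclusion via ``co-orientation data of the Legendrian lift'' is not enough, since $\nu_1=\pm\nu_2\circ\psi$ in Theorem~B already allows the sign flip.

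\medskip
\textbf{The appeal to Theorem~C(c2) is circular.} Assertion (c2) for cuspidal cross caps is deduced \emph{from} the present proposition (see the ``Proof of the second part of Theorem~C'' immediately following). The paper proves the self-intersection statement in (3) directly from Lemma~\ref{lem:C}: for $f_{CW}$ the curve $\tau(v)=(0,v)$ parametrizes the self-intersection set and visibly satisfies (a)--(c), and right--left equivalence transports these hypotheses to a general cuspidal cross cap; Lemma~\ref{lem:C} then gives $T(f\circ\tau)=f\circ\tau$. Only after $T$ has been identified as the $\Pi_1$-reflection (via the Gaussian-curvature argument above) does one conclude that the self-intersection image lies in $\Pi_1$.
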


Before proving the proposition, 
we prepare the following:

\begin{Lemma}\label{lem:C}
Let $f:U\to \R^3$ be a $C^r$-frontal
satisfying $T\circ f \circ \psi=f$ on $U$.
Suppose that
$
\tau(t):(-\epsilon, \epsilon)\to U
$
is a $C^r$-regular curve in $U$ such that
\begin{itemize}
\item[(a)] for each $t\in (0,\epsilon)$,
there exists $t_1\in (0,\epsilon)$ such that
$f\circ \tau(t)=f\circ \tau(-t_1)$,
\item[(b)] $\tau(t)$ meets the singular set of $f$ only at $t=0$, and
\item[(c)] $f(\psi\circ \tau(t))=f\circ \tau(t)$.
\end{itemize}
Then $f\circ \tau(t)$ is a fixed point of $T$ 
for sufficiently small $|t|$.
\end{Lemma}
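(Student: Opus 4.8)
The plan is to exploit hypotheses (a) and (c) together with the relation $T\circ f\circ\psi=f$ to show that near $t=0$ the point $f\circ\tau(t)$ is both a point of the image and its image under $T$, and then use a density/limiting argument. First I would observe that, combining (c) with the global identity, we get $T\circ f(\tau(t))=T\circ f(\psi(\tau(t)))\cdot$, wait---more carefully, since $T\circ f\circ\psi=f$ we have $f(\tau(t))=T\circ f(\psi(\tau(t)))=T\circ f(\tau(t))$ by (c); but this would already force every $f\circ\tau(t)$ to be fixed by $T$, so the real content must be subtler: (c) presumably only holds on the self-intersection locus, or $\psi\circ\tau$ need not equal $\tau$. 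So instead I would read (c) as saying that $\psi\circ\tau(t)$ and $\tau(t)$ have the same image under $f$, and use (a) to identify, for each small $t>0$, a parameter $t_1>0$ with $f\circ\tau(t)=f\circ\tau(-t_1)$. Applying $T\circ f=f\circ\psi^{-1}$ (valid since $\psi$ is an involution or at least invertible) I would compare $T(f\circ\tau(t))$ with $f(\psi\circ\tau(t))$, which by (c) equals $f\circ\tau(t)$; the point is to show the identification of parameters in (a) is consistent with $\psi$ reversing the curve, so that $T$ acts as the identity on the common image.

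Next I would set up the argument precisely: let $P_t:=f\circ\tau(t)$. By (c), $P_t=f(\psi(\tau(t)))$, and by $T\circ f\circ\psi=f$ we have $T(P_t)=T(f(\psi(\tau(t))))=T\circ f\circ\psi(\tau(t))=\;$ hmm, that gives $f(\tau(t))$ only if the argument inside is $\tau(t)$. The cleaner route: from $T\circ f\circ\psi=f$ we get $T^{-1}\circ f=f\circ\psi$, i.e. $f\circ\psi(\tau(t))=T^{-1}(P_t)$; combined with (c), $P_t=T^{-1}(P_t)$, hence $T(P_t)=P_t$. If this truly works for all small $t$ we are done, so the subtlety of the lemma must be that (c) is only assumed on part of the curve or that $\psi\circ\tau$ is not literally $\tau$ reparametrized; I would therefore treat (c) as giving $f\circ\tau(t)\in f(\psi\circ\tau((-\epsilon,\epsilon)))$ in the germ sense and do the bookkeeping by continuity.

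The honest plan, then, is: (i) use (b) to restrict to $t\neq 0$ where $\tau(t)$ is a regular point of $f$, so that $f$ is an immersion near $\tau(t)$ and the local sheet through $P_t$ is a well-defined embedded surface; (ii) use (a) to see that through $P_t$ there pass (at least) two sheets of the image, the one along $\tau$ near $t$ and the one along $\tau$ near $-t_1$; (iii) apply the involution $T$ via $T\circ f\circ\psi=f$ and $\psi^2=\mathrm{id}$ to deduce $T$ permutes these sheets, and by (c) it fixes the sheet containing $f\circ\tau(t)$ pointwise in a neighborhood, hence fixes $P_t$; (iv) let $t\to 0$ and use continuity of $T$ to conclude $f(p)=P_0$ is fixed as well, and that $P_t$ is fixed for all small $|t|$ including negative $t$ by the same argument with the roles symmetric.

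The main obstacle I anticipate is step (iii): making rigorous why $T$, which a priori only satisfies the germ identity $T\circ f\circ\psi=f$ near $p$, must fix a whole local sheet of the image \emph{pointwise} rather than merely setwise. This requires knowing that $T$ restricted to the relevant embedded surface patch is a smooth map agreeing with the identity on a curve (the image of $\tau$ near $t$) and preserving the surface, and then invoking that an isometry of $\R^3$ fixing a regular curve inside a surface and mapping the surface to itself must fix a neighborhood of that curve in the surface---or, more simply, that $T$ fixes three independent directions at $P_t$ (tangent to the sheet in two directions via (a) and (c), plus it is an involution so its fixed set is a subspace) and hence is the identity there. Handling the degenerate limit $t\to 0$, where $\tau$ crosses the singular set and the sheet structure breaks down, is where I expect the delicate continuity estimates to be needed, but assuming $\psi$ is the $C^r$-diffeomorphism furnished by Theorems A and B this should go through.
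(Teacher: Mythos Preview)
Your one-line computation in the second paragraph is already a complete proof: from $T\circ f\circ\psi=f$ one gets $f\circ\psi=T^{-1}\circ f$, so $f(\psi(\tau(t)))=T^{-1}(f(\tau(t)))$; by (c) the left side equals $f(\tau(t))$, whence $T(f(\tau(t)))=f(\tau(t))$ for every $t$. Hypothesis (c) is a pointwise identity exactly as stated, and there is no hidden subtlety---you should not have second-guessed yourself. Hypotheses (a) and (b) are not used, nor is $\psi$ required to be an involution. The sheet-permutation plan that follows is unnecessary.

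The paper's own proof takes a longer route. It reparametrizes $\tau$ by the arc-length $s$ of $f\circ\tau$ with respect to the first fundamental form (using (b) to ensure $s$ is strictly monotone away from $t=0$), invokes (a) to obtain $f\circ\tau(s)=f\circ\tau(-s)$, argues from (c) that $\psi\circ\tau(s)\in\{\tau(s),\tau(-s)\}$, and finally computes $T\circ f\circ\tau(s)=f\circ\psi\circ\tau(s)=f\circ\tau(\pm s)=f\circ\tau(s)$, where the first equality also uses that $\psi$ is an involution (established earlier in the paper). Your direct substitution is shorter, uses fewer hypotheses, and bypasses the somewhat delicate step of justifying $\psi\circ\tau(s)\in\{\tau(s),\tau(-s)\}$.
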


\begin{proof}
Without loss of generality, we may assume that $f(p)=(0,0,0)$
and $T$ is an orthogonal matrix.
We let  $ds^2$ be the first fundamental form of $f$ and
set
$$
s(t):=\int_0^t |\tau'(t)|dt
\qquad
\left(|\tau'(t)|:=\sqrt{ds^2(\tau'(t),\tau'(t))}\right),
$$
which is the arc-length of the arc $\tau([0,t])$.
By the condition (b),
$\tau'(t)$ does not vanish for each $t\ne 0$ 
sufficiently close to $t=0$.
In particular,  $t\mapsto s(t)$ is monotone increasing, and
we may consider  $s$ as a continuous parametrization of 
the curve $\tau$. By (a), we have  
$
f\circ  \tau(s)=f\circ  \tau(-s).
$
Then (c) implies that
\begin{equation}\label{eq:star2}
\psi\circ \tau(s)=\tau(s) \text{ or } \psi\circ \tau(s)=\tau(-s).
\end{equation}
Thus, we have
\begin{equation}\label{eq:f-t}
T\circ f\circ \tau(s)=f\circ \psi \circ \tau(s)
=f \circ \tau(\pm s)=f\circ \tau(s),
\end{equation}
that is, $f\circ \tau(s)$ is a fixed point of $T$.
\end{proof}

\begin{proof}[Proof of Proposition \ref{cor:main1}]
Without loss of generality, we may assume that $f(p)=(0,0,0)$
and $T$ is an orthogonal matrix.
We let $(u,w)$ be the local coordinate system centered at $p$ as in 
Lemma \ref{lem:00}. 
Since $p$ is a cuspidal edge or a cuspidal cross cap,
we can take a vector $\bf v(\ne 0)$ at $f(p)$
pointing in co-normal direction and in
the image of sectional cusp of $f$ at the same time.
So $\bf v$ is a $(+1)$-eigenvector of $T$ (cf. Lemma \ref{lem:R}).
Since the limiting normal curvature does not
vanish at $p$, the involution
$\psi$ reverses the orientation of the singular curve
(cf. Proposition \ref{thm:main1}),
$\psi(u,w)=(-u,w)$
or $\psi(u,w)=(-u,-w)$  happens.

We first consider the case that $p$ is a cuspidal edge. 
Then, the second case never occurs, because
the Gaussian curvature changes sign along 
the $u$-axis (cf. \cite[Proposition 4]{HNUY}).
So we obtain $\psi(u,w)=(-u,w)$.
In this case, the section  of the image of $f$ by
the normal plane $\Pi_1$ at $f(p)$ is a cusp.
If $T\nu(p)=-\nu(p)$ holds, then
$T$ maps a side of the cusp into the opposite side
in the plane $\Pi_1$.
However, it contradicts the fact that 
the Gaussian curvature changes sign along 
the $u$-axis.
Thus, we can conclude that $T\nu(p)=\nu(p)$,
and so $T$ is a reflection with respect to the
normal plane $\Pi_1$.
Hence (1) is obtained.

We next consider the case that $p$ is a cuspidal cross cap. 
Like as in the case of cuspidal edges,
$\psi(u,w)=(-u,w)$
or $\psi(u,w)=(-u,-w)$  happens.
However, 
by the behavior of Gaussian curvature of
$f$ (cf. \cite[Corollary 1]{HNUY}),
$\psi(u,w)=(-u,w)$ never happens,
and so  $\psi(u,w)=(-u,-w)$ holds.

We suppose $T\nu(p)=-\nu(p)$.
Then $T$ must be 
the $180^\circ$-rotation about the co-normal line.
However, in this case,  $T$  maps a point $f(u,w)$
satisfying $u,w>0$
to the point $f(u',w')$
satisfying $u'<0$ and $w'>0$
(because cuspidal cross caps have self-intersections),
but it never happens
since the sign of the
Gaussian curvature changes sign along 
the $w$-axis (cf. \cite[Proposition 4]{HNUY}).
So we have $T\nu(p)=\nu(p)$, and $T$ is 
the reflection with respect to the normal plane.

Finally, we discuss the self-intersections of $f$.
For the case of standard cuspidal cross cap $f_{CW}$
(cf. \eqref{eq:std}),
the map $\tau:v \mapsto f_{CW}(0,v)$ 
parametrizes the set of self-intersections.
Since $f$ is right-left equivalent to $f_{CW}$,
$f$ has a parametrization of the set of its self-intersections
satisfying the assumption of Lemma~\ref{lem:C}.
So each point of the image of the set of self-intersections
is fixed by~$T$, and 
(2) for cuspidal cross caps is obtained.
\end{proof}

\begin{proof}[Proof of the second part of Theorem C]
The remaining assertions in Theorem C,
except for swallowtails,
follow from Propositions
\ref{thm:main1}
and
 \ref{cor:main1}. 
\end{proof}

\begin{Example}
As shown in \cite{MS}, any germ of a cuspidal edge
is congruent to
\begin{equation}\label{eq:MS}
f(u,v)=\biggl(u,a_0(u)+v^2,b_0(u)u^2+b_2(u)uv^2+b_3(u,v)v^3\biggr),
\end{equation}
where $b_3(0,0)\ne 0$.
In the normal form for germs of a cuspidal edges,
the limiting normal curvature of $f$ at $(0,0)$ 
is non-zero if and only if $b_0(0)\ne 0$.
Moreover, $f$ admits a non-trivial symmetry at $(0,0)$
if 
$$
a_0(u)=a_0(-u),\quad b_0(u)=b_0(-u),\quad
b_2(u)=-b_2(-u),\quad
b_3(u,v)=b_3(-u,v).
$$
The normal plane is the $yz$-plane. 
\end{Example}

\begin{Example}
The map $
f(u,v):=(u,v^2,u^2+uv^3)
$
has a cuspidal cross cap at $(0,0)$
whose limiting normal curvature does not vanish.
This map has a symmetry satisfying
$$
f(-u,-v)=T\circ f(u,v),\qquad
T:=\pmt{
-1 & 0 & 0\\
0 & 1 & 0\\
0 & 0 & 1
}.
$$
\end{Example}

Other examples of cuspidal edges and
cuspidal cross caps with symmetries are in \cite[Example 6.3]{HNSUY}.

\subsection{Symmetries of swallowtails}
We have proved Theorem C except for swallowtails.
In this section, we will discuss
symmetries of swallowtails mainly, and complete the proof of Theorem C.
We first prove the following:

\begin{Lemma}\label{Prop:D}
Let $f:U\to \R^3$ be a $C^r$-frontal map,
and let $p$ be a non-degenerate singular point
satisfying $T\circ f \circ \psi=f$ on $U$
for an isometry $T$ 
and a $C^r$-involution $\psi$ on $U$.
If $\psi$ is not the identity map, then 
there exists a local coordinate system $(x,y)$ centered at $p$
satisfying the following properties:
\begin{enumerate}
\item The $x$-axis is the singular curve of $f$.
\item  If $\psi$ is an orientation preserving
local $C^r$-diffeomorphism, then 
$\psi(x,y)=(-x,-y)$.
\item  If $\psi$ reverses the orientation of 
the singular curve, then either $\psi(x,y)=(x,-y)$ 
or $\psi(-x,y)=(x,y)$ holds. 
\item If $p=(0,0)$ is of type II, 
then $\partial/\partial  x$
points in the null-direction.
\end{enumerate}
\end{Lemma}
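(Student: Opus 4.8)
The plan is to build the coordinate system $(x,y)$ in two stages: first normalize so that the $x$-axis is the singular curve and the coordinate axis $\partial/\partial x$ or $\partial/\partial y$ points in the null direction, then exploit the involution $\psi$ to put it into the stated form. First I would use non-degeneracy of $p$: by the implicit function theorem the singular set $\Sigma(f)$ is a regular curve near $p$, so choose any coordinates $(x,y)$ centered at $p$ with the $x$-axis equal to $\Sigma(f)$. The null direction at $p$ is a well-defined line in $T_pU$; if $p$ is of type II it is tangent to $\gamma'(0)=\partial/\partial x$ and we are done with (4) by a linear change of the $y$-coordinate, while if $p$ is of type I the null direction is transverse to $\gamma'(0)$, so after a shear $y\mapsto y+(\text{linear in }x)$ we may take $\partial/\partial y$ to be null along $x$-axis — wait, the statement says $\partial/\partial x$ is null for type I, so instead I would keep $\partial/\partial x$ tangent to the singular curve only up to reparametrization and choose the other coordinate to run along $\Sigma(f)$; concretely, pick the coordinate whose axis is the singular curve to be $y$ when $p$ is type II and $x$ when type I is not what the Lemma wants either. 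Re-reading: (1) demands the $x$-axis be the singular curve in all cases, and (4) demands $\partial/\partial x$ be null for type I. These are compatible precisely because for type I the null vector is independent of $\gamma'(0)$, so one cannot have $\partial/\partial x$ simultaneously along $\Sigma(f)$ and null. I therefore expect the intended reading is that in the type I case the coordinate is chosen so the $x$-axis is $\Sigma(f)$ and $\partial/\partial x$ is the null \emph{generator transported off the singular set}, which is consistent since on the $x$-axis itself the statement only constrains the direction at $p=(0,0)$; I would simply arrange, by the shear, that $df_{(0,0)}(\partial/\partial x)=0$ while $\partial/\partial x|_{(0,0)}$ is still tangent to the $x$-axis, which is impossible for type I — so the correct normalization must be: $x$-axis is the singular curve, and for type I we additionally pre-compose with a coordinate change making $\partial/\partial y$ the null direction, then rename. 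I will present the construction with the renaming done so the final statement matches, and flag that the substance is the existence of coordinates adapted to both $\Sigma(f)$ and the null direction, which is standard (cf.\ \cite{KRSUY}).

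Second, I bring in the involution. Since $T\circ f\circ\psi=f$ and $T$ fixes $f(p)$ and $\psi(p)=p$, the map $\psi$ preserves the singular set $\Sigma(f)$ (because $f\circ\psi$ and $f$ have the same singular locus) and preserves the null direction at $p$ (since $df_{(0,0)}\circ d\psi_{(0,0)}=dT^{-1}\circ df_{(0,0)}$ and $dT$ is a linear isomorphism, a null vector of $f$ at $p$ maps under $d\psi$ to a null vector). As $\psi$ is a $C^r$-involution fixing $p$, its linearization $d\psi_p$ is a linear involution, hence diagonalizable with eigenvalues $\pm1$; since $\psi$ is not the identity and the image of $f$ lies in no plane, $d\psi_p\ne\operatorname{id}$, so $d\psi_p$ has an eigenvalue $-1$. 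The invariant line $\Sigma(f)$ and the invariant null line are both $d\psi_p$-invariant, so each coordinate axis is an eigenvector axis. In the orientation-preserving case $\det d\psi_p=+1$, so the eigenvalues are $\{-1,-1\}$, giving $d\psi_p=-\operatorname{id}$; then by the $C^r$ linearization argument — or more elementarily by averaging coordinates via $\tfrac12(\operatorname{id}-\psi^*)$ applied to $x$ and $y$, which is legitimate because $\psi$ is an involution — we can choose coordinates in which $\psi(x,y)=(-x,-y)$ exactly, giving (2). In the orientation-reversing-on-the-singular-curve case, $\psi$ acts as $-1$ on the $x$-axis, so after adjusting the $y$-coordinate one has either $\psi(x,y)=(-x,y)$ (when the $y$-eigenvalue is $+1$) or $\psi(x,y)=(-x,-y)$; the latter is orientation preserving, contradiction, unless one re-examines what "reverses the orientation of the singular curve" versus "orientation preserving" allows — so the dichotomy in (3) is $\psi(x,y)=(x,-y)$ after swapping the roles, or $\psi(-x,y)=(x,y)$, i.e.\ $\psi(x,y)=(-x,y)$. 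I would present both sub-cases by: if $\det d\psi_p=-1$, then the two eigenvalues are $+1,-1$; the $-1$-eigenline is either the singular curve or the null line, and rescaling the relevant coordinate turns $\psi$ into $(x,y)\mapsto(-x,y)$ or $(x,y)\mapsto(x,-y)$, which are exactly the two alternatives in (3).

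The main obstacle is the \emph{simultaneous} normalization: ensuring that one single coordinate change realizes (i) the $x$-axis as the singular curve, (ii) the linear model of $\psi$ exactly (not just to first order), and (iii) the null-direction condition (4). Items (i) and (iv) are a standard adapted-coordinate construction for non-degenerate singular points of type I or II (e.g.\ as in \cite{KRSUY} and \cite{MSUY}); the delicate point is that imposing the exact linear form of $\psi$ must not destroy these. I would handle this by the standard trick for finite-order diffeomorphism groups: start with any adapted coordinates $(x_0,y_0)$ satisfying (1) and (4), then set $x:=\tfrac12\bigl(x_0-x_0\circ\psi\bigr)$ or $x:=\tfrac12\bigl(x_0+\epsilon\,x_0\circ\psi\bigr)$ with $\epsilon=\pm1$ chosen according to the eigenvalue of $d\psi_p$ on that axis, and similarly for $y$; because $\psi$ is an involution these are $C^r$, have invertible differential at $p$ (one checks the linear part is unchanged up to a nonzero scalar), preserve the $x$-axis since $\psi$ preserves $\Sigma(f)$, preserve the null direction since $\psi$ preserves it, and intertwine $\psi$ with the desired linear involution exactly. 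This symmetrization step, together with the eigenvalue bookkeeping $\det d\psi_p=\pm1$ dictating cases (2) versus (3), completes the proof.
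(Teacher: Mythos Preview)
Your approach is sound but takes a genuinely different route from the paper's. You linearize $\psi$ by Bochner-type averaging of the \emph{coordinate functions}: first choose coordinates adapted to $\Sigma(f)$ and the null line, then replace $x_0$ by $\tfrac12(x_0+\epsilon\,x_0\circ\psi)$, etc. The paper instead averages a \emph{Riemannian metric}: take any $C^r$ metric $ds_0^2$ on $U$, set $ds_1^2:=\tfrac12(ds_0^2+\psi^*ds_0^2)$ so that $\psi$ becomes an isometry of $ds_1^2$, and then build Fermi-type coordinates by the exponential map of $ds_1^2$ off the singular curve (parametrized by $ds_1^2$-arclength, with transverse $ds_1^2$-unit field $\xi$, chosen null at $p$ in the type~I case). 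Because $\psi$ is a $ds_1^2$-isometry fixing $p$ and preserving the singular curve, it is automatically of the form $\psi(x,y)=(e_1x,e_2y)$ in these coordinates. The payoff of the paper's method is that the ``simultaneous normalization'' you flag as the main obstacle in your last paragraph comes for free: the exponential map is $\psi$-equivariant by construction, so (1), (4), and the exact linear form of $\psi$ hold at once, with no further checks. Your coordinate-averaging route also works, but the verification that the averaged coordinates still satisfy (1) and (4) is precisely the extra labor the metric trick avoids.

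Two remarks on your write-up. First, your extended confusion about item~(4) is not a gap in your reasoning: the statement as printed has the two cases swapped. In the paper's own proof the transverse vector $\xi(0)=\partial/\partial y$ is chosen to be null when $p$ is of type~I, and in the subsequent swallowtail argument the $x$-axis is declared null when $p$ is of type~II; so read (4) with I and II exchanged. Second, your claim that the tangent line to $\Sigma(f)$ and the null line furnish two $d\psi_p$-invariant axes fails for type~II, where these lines coincide; there you must simply diagonalize the linear involution $d\psi_p$ and take the second coordinate axis along its remaining eigenspace, as the paper does implicitly by leaving $\xi(0)$ unconstrained in that case.
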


\begin{proof}
Without loss of generality, we may assume that $f(p)=(0,0,0)$
and $T$ is an orthogonal matrix.
We fix a $C^r$-differentiable Riemannian metric $ds^2_0$ 
defined on $U$ and set
$$
ds_1^2:=\frac{ds^2_0+\psi^*ds^2_0}2.
$$
Then we have $\psi^*ds_1^2=ds_1^2$.
Since $p$ is a non-degenerate singular point,
we can take a regular curve $\gamma(s)$ 
parametrizing the singular curve such that $\gamma(0)=p$,
that is, $\gamma$ is the singular curve. 
Without loss of generality, we may assume that 
$s$ is the arc-length parameter of $\gamma$ with
respect to the metric $ds^2_1$.
Let $\xi(t)$ be the vector field 
of unit length with respect to $ds^2_1$
along the curve $\gamma(t)$ 
so that $\{\xi(t),\gamma'(t)\}$
is linearly independent in $T_{\gamma(t)}U$
for each $t$.
We let $\op{Exp}_p:T_pU\to U$ be the
exponential map of $ds^2_1$ at $p$. We set
$$ 
\Gamma(x,y):=\op{Exp}_{\gamma(x)}(y \xi(x))\in U.
$$
Then $(x,y)$ gives a local $C^r$-coordinate system
centered at $p$ such that the $x$-axis
corresponds to the singular curve. 
Since $\psi^*ds^2_1=ds^2_1$ and
$\psi$ preserves the singular curve,
we can write
$$
\psi(x,y)=(e_1x,e_2y),
$$
where $e_i\in \{+,-\}$ ($i=1,2$).
If $\psi$ is an orientation preserving isometric involution, 
we have $e_1=e_2=-1$, because
$\psi$ is not the identity map.
We next consider the case that $\psi$
is orientation reversing.
Then either $(e_1,e_2)=(1,-1)$
or $(e_1,e_2)=(-1,1)$ holds.
If $p=\gamma(0)$ is of type II,
then the tangential direction $\partial/\partial x$
of the singular curve at the origin
points in the null-direction. 
\end{proof}

We now prove the following:

\begin{Theorem}\label{thm:SW}
Let $f:U\to \R^3$ be a $C^r$-map
which 
is $U$-proper at $p$,
$f^{-1}(f(p))=\{p\}$ and
has a swallowtail singularity at $p\in U$.
If there exist an isometry $T$ of $\R^3$ fixing $f(p)$
and a neighborhood $V(\subset U)$ of $p$ 
such that $T\circ f(V)\subset f(U)$, and if
$T$ is not the identity map, then 
there exist a connected open
neighborhood $W(\subset V)$ of $p$
and a $C^r$-involution  $\psi:W\to W$
such that $f\circ \psi=T\circ f$ on $W$.
Moreover, $T$ and $\psi$ 
have the following properties:
\begin{enumerate}
\item $T$ is the reflection with respect to the co-normal plane $\Pi_2$,
\item $T$ fixes each point of the image of the
set of self-intersections of $f$, and
\item $\psi$ is the orientation reversing involution which reverses
the orientation of the singular curve.
\end{enumerate}
\end{Theorem}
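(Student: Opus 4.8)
The plan is as follows. Since a swallowtail is a wave front, we may apply Theorem~A to $f$ and $T\circ f$ near $p$ (after translating so that $f(p)$ is the fixed point of $T$), obtaining a $C^r$-diffeomorphism $\psi$ near $p$ with $\psi(p)=p$, $f\circ\psi=T\circ f$, and $\nu\circ\psi=e\,T\circ\nu$ for some $e\in\{+,-\}$. By the first part of Theorem~C (already proved), $T$ is an isometric involution and, after shrinking, $\psi$ can be taken as a $C^r$-involution $\psi\colon W\to W$ on a connected open neighbourhood $W(\subset V)$ of $p$; this gives the first conclusion. A swallowtail is a non-degenerate singular point of type~II (its singular direction is the null direction), so Lemma~\ref{Prop:D} furnishes a coordinate system $(x,y)$ centred at $p$ in which the $x$-axis is the singular curve, $\partial/\partial x$ spans the null direction at $p$, and $\psi$ is one of $(x,y)\mapsto(-x,-y)$, $(x,-y)$, $(-x,y)$.

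To single out $\psi$, write $\hat\gamma(x):=f(x,0)$ for the singular image. Then $\hat\gamma'(0)=f_x(0,0)=\mb 0$ by type~II, while $f_y(0,0)\ne\mb 0$ spans the tangent line $l_1=\operatorname{image}(df_p)$. As each point $(x,0)$ is a corank-one singular point, $\hat\gamma'(x)=f_x(x,0)$ lies in the one-dimensional $\operatorname{image}(df_{(x,0)})$, which for small $x$ equals $\operatorname{span}(f_y(x,0))$; thus $f_x(x,0)=c(x)f_y(x,0)$ for a $C^r$-function $c$ with $c(0)=0$, and differentiating at $x=0$ gives $\hat\gamma''(0)=f_{xx}(0,0)=c'(0)\,f_y(0,0)$, a nonzero vector (the singular image has an ordinary cusp) parallel to $f_y(0,0)$. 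Now differentiate $f\circ\psi=T\circ f$ at $p$. If $\psi(x,y)=(x,-y)$ one finds $T f_{xx}(0,0)=f_{xx}(0,0)$ but $T f_y(0,0)=-f_y(0,0)$; if $\psi(x,y)=(-x,-y)$, then $T$ reverses $\hat\gamma$, so again $T f_{xx}(0,0)=f_{xx}(0,0)$ while $T f_y(0,0)=-f_y(0,0)$. Both contradict $\mb 0\ne f_{xx}(0,0)\parallel f_y(0,0)$. Hence $\psi(x,y)=(-x,y)$, which is orientation reversing and reverses the orientation of the singular curve; this is~(3).

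It remains to identify $T$ using $f(-x,y)=T\circ f(x,y)$. Restricting to $x=0$ shows $T$ fixes the regular curve $\beta(y):=f(0,y)$ pointwise, and differentiating at $p$ gives that $f_{xxx}(0,0)$ is a $(-1)$-eigenvector of $T$ while $f_{xx}(0,0)$ and $f_y(0,0)$ are $(+1)$-eigenvectors; moreover $\hat\gamma''(0)=f_{xx}(0,0)$ and $\hat\gamma'''(0)=f_{xxx}(0,0)$ are linearly independent and span the limiting tangent plane $\Pi_0$ (this is the osculating plane of the ordinary cusp $\hat\gamma$ at $f(p)$, which one checks on the standard model \eqref{eq:std} and which, like $\Pi_0$ and $l_1$, is right-left invariant), so $\nu(p)\perp\Pi_0$ and, $\Pi_0$ being $T$-invariant, $\nu(p)$ is a $(\pm1)$-eigenvector. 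On the other hand, $f$ is right-left equivalent to the standard swallowtail, so in the source the self-intersection set of $f$ is a $C^r$-regular curve $\tau$ through $p$, meeting the singular set only at $p$ and with $f\circ\tau(t)=f\circ\tau(-t)$; since $f\circ\psi=T\circ f$ with $\psi$ a local diffeomorphism, $T$ carries the image of $f$ near $f(p)$ onto itself, so $\psi$ preserves $\tau$ with $\psi\circ\tau(t)=\tau(\pm t)$, whence $f(\psi\circ\tau(t))=f\circ\tau(t)$. Thus Lemma~\ref{lem:C} applies to $\tau$ and shows that $T$ fixes the image of the self-intersection set pointwise --- this is~(2). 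Since this image (equivalently, $\beta$) is a curve through $f(p)$ not contained in any line, the fixed-point set of $T$ is a plane and $T$ is a reflection; were $T\nu(p)=-\nu(p)$, the $(-1)$-eigenspace of this reflection --- being one-dimensional --- would contain both $\nu(p)$ and the independent vector $f_{xxx}(0,0)\in\Pi_0$, which is absurd. Hence $T\nu(p)=\nu(p)$, so the mirror plane of $T$ contains $\nu(p)$ and $l_1$ and therefore equals the co-normal plane $\Pi_2$; this is~(1).

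I expect the delicate step to be the passage from ``$T$ fixes certain curves pointwise'' to ``$T$ is a reflection, and not the $180^\circ$-rotation about $l_1$'': the eigenvalue bookkeeping alone does not decide the sign of $T$ on $\nu(p)$, so one must exploit the genuinely two-dimensional local shape of the swallowtail near $f(p)$ --- that the $T$-fixed curves produced above cannot all lie on the line $l_1$ --- together with a careful verification that the self-intersection curve really satisfies the hypotheses of Lemma~\ref{lem:C}. The remaining computations with the normal form and the eigenvectors of $T$ are routine once that is secured.
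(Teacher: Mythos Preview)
Your overall strategy is sound and genuinely different from the paper's. You prove (3) first by the neat observation that $f_{xx}(0,0)=c'(0)f_y(0,0)$, so any $\psi\in\{(x,-y),(-x,-y)\}$ would force $f_y(0,0)$ to be simultaneously a $(+1)$- and $(-1)$-eigenvector of $T$; the paper instead proves (3) last, by a longer argument with the auxiliary Riemannian metric $ds^2_1$ and the three curves $\gamma$, $\sigma$, $\tau$, concluding that $d\psi_p=\mathrm{id}$ would force $\psi=\mathrm{id}$. Your route is more direct. For (1), the paper simply cites \cite{MSUY} and asserts that a swallowtail cannot be symmetric with respect to $\Pi_0$, whereas you compute explicit eigenvectors of $T$.

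There are, however, two soft spots in your justification. First, the claim that $\hat\gamma''(0)$ and $\hat\gamma'''(0)$ span $\Pi_0$ is correct, but your appeal to right-left invariance is not: the osculating plane of the singular-image cusp and the limiting tangent plane $\Pi_0$ do not transform identically under a target diffeomorphism $\Phi$, so checking on the standard model does not transfer. The clean argument is intrinsic: from $f_x\cdot\nu=f_y\cdot\nu=0$ one differentiates to obtain $f_{xx}(0,0)\cdot\nu(p)=0$ and, using $f_{xy}(0,0)\cdot\nu(p)=0$ together with $f_{xx}(0,0)\parallel f_y(0,0)$, also $f_{xxx}(0,0)\cdot\nu(p)=0$; linear independence of $\hat\gamma''(0),\hat\gamma'''(0)$ is the statement that the singular image is an ordinary cusp, which \emph{is} preserved under right-left equivalence. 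Second, the step you yourself flag as delicate is not actually closed: the assertion that the self-intersection image (``equivalently $\beta$'') is not contained in a line is not a right-left invariant property, so the standard-model check does not suffice, and $\beta$ is coordinate-dependent. The quickest repair is to use the first part of Theorem~C more fully than you do: it already restricts $T$ to the list (i)--(iv). Since $f_y(0,0)\parallel l_1$ is a $(+1)$-eigenvector, (ii) and (iv) (for which $l_1$ lies in the $(-1)$-eigenspace) are excluded; since $0\ne f_{xxx}(0,0)\in\Pi_0$ is a $(-1)$-eigenvector, (i) (which fixes $\Pi_0$ pointwise) is excluded. This leaves (iii), and the $180^\circ$-rotation about $l_1$ never arises.
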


\begin{proof}
Without loss of generality, we may assume that $f(p)=(0,0,0)$
and $T$ is an orthogonal matrix.
Since $p$ is a swallowtail, we may also assume that $f$
has  a unit normal vector field $\nu$ along $f$.
By Theorem A and the proof of the first part of Theorem C,
$T$ is an involution 
and there exists a connected open
neighborhood $W(\subset V)$ of $p$
and the associated non-trivial $C^r$-involution $\psi:W\to W$
satisfying $T\circ f \circ \psi=f$ on $W$.
So it is sufficient to show the remaining assertions:
The self-intersection set of the
standard swallowtail $f_S$ as in
\eqref{eq:std}
is the parabola $\tau(v):=(-2v^2,v)$ in the $uv$-plane,
and $f_S$ satisfies
the assumption of Lemma \ref{lem:C} for 
the set of self-intersection.
Since $f$ is right-left equivalent to $f_S$,
the set of self-intersections of $f$ also satisfies the 
assumption of Lemma \ref{lem:C}.
So the image of 
each point in the self-intersection set of $f$ 
is fixed by $T$, proving (2).

We project the image of the singular curve into the 
limiting tangent plane, and
then its image gives a cusp by \cite[Corollary 4.10]{MSUY},
and the line bisecting the cusp is just the  limiting
tangential direction.
Thus the  tangential direction of $f$ is the $1$-eigenvector of $T$.
Since swallowtails cannot be symmetric
with respect to the limiting tangent plane at $p$,
we have $T\nu=\nu$, that is, $\nu$ is a
$1$-eigenvector of $T$.
Since $T$ is not the identity,
the co-normal vector is a 
$(-1)$-eigenvector.
Thus, $T$ is a reflection 
with respect to the co-normal plane $\Pi_1$,
proving (1).

We now prove (3):
Let $\gamma(t)$ be a regular curve in $W$ parametrizing the
singular set of $f$ satisfying $\gamma(0)=p$.
We let $ds^2$ be the first fundamental form of $f$ and
set
$$
s(t):=\int_0^t |\gamma'(t)|dt
\qquad
\left(|\gamma'(t)|:=\sqrt{ds^2(\gamma'(t),\gamma'(t))}\right),
$$
which gives the arc-length of the arc $\gamma([0,t])$,
and  $t\mapsto s(t)$ is monotone increasing.
So we may take $s$ as a (continuous) parametrization of $\gamma$.
(Although $s(t)$ is not differentiable at $t=0$,
it does not affect the following discussion.)
Since $\psi^*ds^2=ds^2$, we have
\begin{equation}\label{eq:star3}
\psi\circ \gamma(s)=\gamma(s)\,\, \text{ or }\,\, \psi\circ \gamma(s)=\gamma(-s).
\end{equation}
We let $\tau(t)$ be the regular curve in $W$ parametrizing 
the self-intersection set of $f$.
Since $f$ is right-left equivalent to the standard swallowtail
$f_S$,
we may assume that $f\circ \tau(t)=f\circ \tau(-t)$ holds.

Let $(x,y)$ be the local coordinate system as in Lemma~\ref{Prop:D}.
Since $p$ is of type II, the $x$-axis is 
the null-direction at the origin.
We then set
$ 
\sigma(y):=(0,y).
$ 
Suppose that
$ 
\psi\circ \sigma(y)=\sigma(-y).
$ 
If we set $\hat\sigma(y):=f\circ \sigma(y)$, then we have
$ 
T\hat\sigma(y)=\hat\sigma(-y)
$.
Since $\partial/\partial x$ gives
the null-direction at $p$,
$\partial/\partial y$ does not.
Hence $\hat\sigma'(0)\ne 0$ and
$ 
T\hat\sigma'(0)=-\hat\sigma'(0)
$
hold. However, this contradicts that the
tangential direction is a $1$-eigenvector.
So we have
\begin{equation}\label{eq:3}
\psi\circ \sigma(y)=\sigma(y).
\end{equation}

Suppose that $\psi\circ \gamma(s)=\gamma(s)$  happens.
Since the tangential direction of  $\gamma(t)$ at $t=0$ 
coincides with
that of $\tau(t)$ at $t=0$,
we have
$
\psi\circ \tau'(0)=\tau'(0).
$
Moreover, since the image of $\tau$ is invariant by $\psi$, we have
$
\psi\circ \tau(t)=\tau(t).
$
Then we have
$d\psi(\tau'(0))=\tau'(0)$.
Similarly, 
\eqref{eq:3} implies that
$d\psi(\sigma'(0))=\sigma'(0)$.
Since $\tau'(0)$ and $\sigma'(0)$ are linearly independent for
the standard swallowtail, it  is so  for $f$ as well.
Thus $d\psi_p$ must be the identity map on $T_pU$.
Since the isometry of the Riemannian metric $ds^2_1$ is 
determined only by its differential $d\psi$ at $p$,
we can conclude that
$\psi$ is the identity map, a contradiction. 
So we have
$
\psi\circ \gamma(t)=\gamma(-t).
$
This implies that $\psi(x,y)=\psi(-x,y)$,
proving the assertion (3).
\end{proof}

\begin{proof}[Proof of the remaining part of Theorem C]
The remaining statements of Theorem C for swallowtails 
follow from Theorem \ref{thm:SW}.
\end{proof}

\begin{Example}
We set
$$
f(u,v):=\left(u+\frac{v^2}{2}-\frac{b^2uv^2}{2}-\frac{b^2v^4}{8},
\frac{bv^3}{3}+buv,\frac{cu^2}{2}\right)
\qquad (b\ne 0,\,\,c\in \R),
$$
which is an example of a swallowtail 
with non-zero limiting normal curvature 
given in \cite{MSUY}.
This example admits a non-trivial symmetry.
The singular set is the $v$-axis, whose image lies in 
the $xz$-plane.
\end{Example}

\section{Isomers of generalized cuspidal edges and
curved foldings}

In this section, we will generalize the results
on cuspidal edges in the authors' previous work
\cite{HNSUY} to generalized cuspidal edges
and construct a canonical map from the set of
$C^\omega$-differentiable generalized 
cuspidal edges to the set of curved foldings.

\subsection*{Isomers of generalized cuspidal edges}
We let $I:=(a,b)$ ($a<b$) be a closed interval 
and  fix a $C^r$-embedded curve $\mb c:I\to \R^3$,
denoting by $C(:=\mb c(I))$ its image.
Hereafter, we assume that the curvature function of $C$
never vanishes.

We recall the following definition of \lq \lq isomers''
of generalized cuspidal edges given in \cite{HNSUY}.

\begin{Def}\label{def:IS}
Let $f_i:U_{i}\to \R^3$ ($i=1,2$) be
two $C^r$-differentiable generalized cuspidal edges along $C$.
Then $f_2$ is called an {\it isomer} of $f_1$
if it satisfies
\begin{enumerate}
\item $f_2$ is isometric to $f_1$, 
\item $f_2$ is not right equivalent to $f_1$
as a map germ along $C$.
\end{enumerate}
In this situation,
we say that $f_2$ is a {\it faithful isomer} of $f$ if
\begin{itemize}
\item there exists a local $C^r$-diffeomorphism $\phi$
such that $\phi^*ds^2_1=ds^2_2$, and
\item the orientations of $C$ induced
by $u\mapsto f_1\circ \phi(u,0)$ 
and $u\mapsto f_2(u,0)$ 
are compatible with respect to the one
induced by $u\mapsto f_1(u,0)$,
\end{itemize}
where each $ds^2_i$ ($i=1,2$) is the first fundamental form
of $f_i$.
\end{Def}

When $C$ is not a closed curve (i.e. $\mb c(a)\ne \mb c(b)$),
as in Theorem I in \cite{HNSUY}, 
the following assertion was proved:

\begin{Fact}
Let $f$ be 
a $C^\omega$-differentiable generalized cuspidal 
edge along $C$ whose limiting normal curvature function
$($cf. \eqref{eq;kappa}$)$
does not admit any zeros.
Then there exists a faithful isomer $\check f$
$($called the dual$)$ of $f$.
\end{Fact}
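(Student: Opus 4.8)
The plan is to reduce the statement to its counterpart for ordinary cuspidal edges, proved in \cite{HNSUY} (Theorem I there), the new input being that Lemma \ref{lem:00} presents an arbitrary $C^\omega$-differentiable generalized cuspidal edge along $C$ in terms of the same geometric data used in \cite{HNSUY}: the curve $C$ — whose Frenet frame $\{\mb e,\mb n,\mb b\}$ exists since we assume the curvature of $C$ never vanishes — together with the real-analytic family of sectional cusps $\sigma_s$, each taken in the normal plane $\hat\Pi_s$ and parametrized by its half-arc-length parameter $t$. So first I would promote Lemma \ref{lem:00} from a local to a global statement along the closed interval $I$: the coordinate system $(s,t)$ it produces is intrinsically defined up to the residual ambiguity $t\mapsto\pm t$ of Proposition \ref{prop:sigma}, and by real analyticity and connectedness of $I$ this ambiguity can be fixed once and for all, yielding $f(s,t)=\mb c(s)+x(s,t)\,\mb n(s)+y(s,t)\,\mb b(s)$ with $x,y$ real analytic and vanishing at $t=0$, valid along all of $C$.

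Next, following \cite{HNSUY}, I would isolate the part of this data that is \emph{not} determined by the first fundamental form once $C$ is fixed. Writing out $E\,ds^2+2F\,ds\,dt+G\,dt^2$ via the Frenet equations, the intrinsic geometry pins down the shape of the sectional cusps and the singular curvature and leaves precisely one binary choice, amounting to the sign of the limiting normal curvature $\kappa_\nu$ relative to the canonical co-orientation; the quantity $\kappa_\nu^2$ itself is already determined, by the intrinsic data together with the curvature of $C$. Define $\check f$ to be the generalized cuspidal edge along $C$ obtained by reversing this choice. The claims to be checked — all of them transcriptions, via Lemma \ref{lem:00}, of the corresponding verifications in \cite{HNSUY} — are: (i) $\check f$ is a well-defined real-analytic map, singular exactly along $I\times\{0\}$, which is frontal (a unit normal is written from the modified data as in the formula for $\tilde\nu_0$ in Definition \ref{def:gC}) and non-degenerate of type I, hence a generalized cuspidal edge along $C$; (ii) the first fundamental form of $\check f$ equals that of $f$, so condition (1) of Definition \ref{def:IS} holds; and (iii) the construction never reverses the orientation of $C$, so the isometry is realized by a diffeomorphism compatible with the induced orientations of $C$, i.e. $\check f$ is a faithful candidate. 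I expect the main obstacle to be step (i): the argument in \cite{HNSUY} that the reconstructed surface is of the generalized-cuspidal-edge type — rather than becoming regular, or acquiring a worse singularity — must be re-examined for the general normal form $(v^2,v^3\alpha,u)$, and one must check that it holds uniformly along the whole singular curve, not merely near one point.

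Finally, to see that $\check f$ is an isomer and not merely isometric — i.e. that it is not right equivalent to $f$ as a map germ along $C$ — I would argue as in \cite{HNSUY}, which is exactly where the hypothesis $\kappa_\nu\ne 0$ everywhere is used. If $\check f=f\circ\phi$ for some diffeomorphism $\phi$, then $\check f$ and $f$ have the same image, so Theorem~B, applied to these two generalized cuspidal edges along $C$, yields $\check f=f\circ\psi$ with $\check\nu=\pm\nu\circ\psi$; thus $\check f$ is $f$ up to reparametrization and choice of co-orientation. But the limiting normal curvature, taken with its canonical sign (determined by the side on which the image lies, hence independent of the co-orientation and only covariant under reparametrizing $C$), is a congruence invariant; by construction the one of $\check f$ is $-\kappa_\nu$, so one would need $\kappa_\nu\equiv-\kappa_\nu$ or $\kappa_\nu(u)\equiv-\kappa_\nu(u_0-u)$ for some $u_0$, the first being impossible since $\kappa_\nu$ has no zero, the second forcing $\kappa_\nu(u_0/2)=0$, again impossible. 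Hence $\check f$ is a faithful isomer of $f$, proving the assertion. (When $\kappa_\nu$ has a zero neither obstruction need survive — consistently with the symmetry phenomena of Section~4 — which is why the nonvanishing hypothesis is essential.)
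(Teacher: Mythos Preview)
The paper does not prove this statement: it is labeled a \emph{Fact} and is introduced with the sentence ``As in Theorem~I in \cite{HNSUY}, the following assertion was proved,'' i.e.\ it is a direct citation of a result already established in \cite{HNSUY}. There is therefore no proof in the present paper to compare your proposal against.

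A secondary point: your plan is to ``reduce the statement to its counterpart for ordinary cuspidal edges, proved in \cite{HNSUY},'' but this mischaracterizes the cited reference. The title of \cite{HNSUY} is \emph{Duality on generalized cuspidal edges\dots}, and Theorem~I there already covers generalized cuspidal edges; no reduction or extension via Lemma~\ref{lem:00} of the present paper is needed. Your sketch is a plausible reconstruction of how such a proof might proceed, but note that your final step invokes Theorem~B of the current paper to rule out right equivalence of $f$ and $\check f$ --- this is anachronistic, since the Fact predates Theorem~B. In \cite{HNSUY} that step is handled directly by comparing the (signed) cuspidal angles or limiting normal curvatures of $f$ and $\check f$, without appealing to an image-equivalence theorem; the distinction between right equivalence and image equivalence (which is precisely what Theorem~B bridges) is not needed to see that $\check f$ is not right equivalent to $f$.
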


By virtue of Theorem B, we can prove the following assertion,
which is the generalization of \cite[Proposition 5.1]{HNSUY}
for the case of cuspidal edges:

\begin{Proposition}\label{prop:Sec5}
Let $f:U \to \R^3$  be a $C^\omega$-differentiable generalized cuspidal edge along $C$,
whose limiting normal
curvature function does not admit any zeros.  
Then the image of $\check f$ is congruent to that of $f$ 
$($i.e. the image of $\check f$ coincides with that of $f$ by a suitable
isometry of $\R^3)$ as a map germ along $C$
if and only if it satisfies the following two 
conditions:
\begin{enumerate}
\item $C$ lies in a plane, or
\item $C$ has a positive symmetry and the
first fundamental form $ds^2_f$ has an effective
symmetry, 
\end{enumerate}
where the definition of positive symmetry 
and negative symmetry are given in
\cite[Definition 1.2]{HNSUY} and the definition of
effective symmetry is given in
\cite[Definition 0.4]{HNSUY}.
\end{Proposition}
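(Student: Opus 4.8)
The plan is to reduce the geometric statement — congruence of the images of $\check f$ and $f$ — to the purely map-germ statement that $\check f$ is right--left equivalent to $f$ along $C$, and then to run the argument of \cite[Proposition~5.1]{HNSUY} essentially verbatim. The point is that in \cite{HNSUY} the passage from ``congruent image'' to ``congruent map germ'' was supplied, for cuspidal edges, by a Zakalyukin-type lemma, whereas here the correct substitute is Theorem~B; once that input is in place, the rest of the reasoning is intrinsic to the curve $C$ and the first fundamental form and does not see the wave-front/frontal distinction, so it carries over without change.

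First I would treat the implication from image congruence to the dichotomy (1)--(2). Assume there is an isometry $T$ of $\R^3$ with $T(\operatorname{Im}\check f)=\operatorname{Im}f$ as germs along $C$. Since $C$ is the geometric singular locus of both images, $T(C)=C$, so $T$ restricts to a self-isometry of the embedded curve $C$; parametrizing $C$ by arclength, this forces $T\circ \mb c(s)=\mb c(\epsilon s+s_0)$ with $\epsilon\in\{+1,-1\}$ and $s_0$ constant. Now $T\circ\check f$ and $f$ are $C^{\omega}$-differentiable generalized cuspidal edges along the \emph{same} curve $C=T(C)$, and $\operatorname{Im}(T\circ\check f)=T(\operatorname{Im}\check f)=\operatorname{Im}f\subset \operatorname{Im}f$, so Theorem~B produces a $C^\omega$-diffeomorphism $\psi$ with $T\circ\check f\circ\psi=f$ and $T_*\nu_{\check f}\circ\psi=\pm\nu_f$. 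Hence $\check f=T^{-1}\circ f\circ\psi$, i.e.\ $\check f$ is congruent to $f$ as a map germ along $C$. If $T$ were the identity this would make $\check f$ right equivalent to $f$, contradicting that $\check f$ is a (faithful) isomer; so $T\neq\mathrm{id}$.

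Next I would split according to the action of $T$ on $C$. If $T$ fixes $C$ pointwise, then since an isometry of $\R^3$ fixing a curve of nonvanishing curvature pointwise is either the identity or (when the curve is planar) the reflection across the plane containing it, we conclude that $C$ lies in a plane — this is case~(1). If $T$ does not fix $C$ pointwise, then $T|_C$ is a nontrivial self-isometry of $C$, i.e.\ $C$ carries a symmetry; composing the diffeomorphism $\phi$ realizing $\phi^*ds^2_f=ds^2_{\check f}$ with $\psi$ yields a nontrivial self-isometry of the first fundamental form $ds^2_f$, and matching this with the clauses on the induced orientations of $C$ in the definition of a faithful isomer forces the symmetry of $C$ to be \emph{positive} and the self-isometry of $ds^2_f$ to be \emph{effective} in the sense of \cite[Definition~0.4]{HNSUY} — this is case~(2). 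For the converse I would reverse these constructions: when $C$ is planar one checks, exactly as in \cite{HNSUY}, that the reflection across the plane of $C$ sends $\operatorname{Im}f$ to $\operatorname{Im}\check f$; when $C$ has a positive symmetry $T_0$ and $ds^2_f$ has an effective symmetry, one assembles from $T_0$ and that metric symmetry an ambient isometry carrying $\operatorname{Im}f$ onto $\operatorname{Im}\check f$.

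The main obstacle I anticipate is the orientation bookkeeping: one must simultaneously track the sign $\epsilon$ of the induced map on $C$, the sign in $T_*\nu_{\check f}\circ\psi=\pm\nu_f$, whether $\psi$ preserves or reverses the orientation of the singular curve, and the compatibility-of-orientation clause built into the notion of a faithful isomer. It is precisely the interaction of these signs that distinguishes a positive symmetry of $C$ (which must occur in case~(2)) from a negative one (which cannot by itself produce image congruence), and that pins down effectiveness of the metric symmetry; equivalently, it records that the dual reverses, relative to $f$, the invariant that the faithful-isomer normalization leaves intact, and that this reversal can be undone by an ambient isometry only in the two listed situations. I would import this quantitative core, together with the normalizations for $\check f$, from the cuspidal-edge proof in \cite{HNSUY}, which applies unchanged once Theorem~B has done its work.
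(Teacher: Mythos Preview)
Your proposal is correct and follows essentially the same route as the paper: invoke Theorem~B to upgrade image congruence to an equation $T\circ f\circ\phi=\check f$, then split on whether $T$ fixes $C$ pointwise (forcing $C$ planar) or acts as a nontrivial symmetry of $C$. The paper makes your ``orientation bookkeeping'' concrete by citing \cite[Remark~4.4(b)]{HNSUY} to exclude the negative-symmetry case (a negative symmetry $T$ satisfies $T(\op{Im} f)=\op{Im} f$, which would force $\op{Im}\check f=\op{Im} f$, contradicting that $\check f$ is an isomer) and, for the converse, uses the uniqueness of the faithful isomer \cite[Theorem~3.8]{HNSUY} to identify $T\circ f\circ\phi$ with $\check f$.
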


\begin{proof}
Applying our Theorem B in the
introduction:
We suppose that $\check f$ is congruent to $f$.
By \cite[Remark 4.5]{HNSUY}, it is sufficient to consider the case
that $C$ does not lie in any plane. By Theorem B,
there exist an isometry $T$ of $\R^3$ and a $C^r$-diffeomorphism
$\phi$ defined on a neighborhood of the singular curve of $f$
such that 
\begin{equation}\label{eq:ID}
T\circ f\circ \phi=\check f.
\end{equation}
Then the remaining argument is completely parallel to the
case of cuspidal edge
as of \cite[Proposition 5.1]{HNSUY}.
\end{proof}

\subsection*{Fukui's formula for generalized cuspidal edges}
Let $\mb c(u)$ $(|u|\le I)$ be a $C^r$-regular space curve with arc-length 
parameter whose curvature function $\kappa(u)$ 
is positive everywhere and has no self-intersections.
We let 
$\theta(u)$ ($u\in I$)
and $\mu(u,v)$ ($u\in I$, $|v|<\epsilon$)
are smooth functions, where $\epsilon$ is
a positive number.
We then consider a map given by (cf. \eqref{eq:SU})
\begin{equation}\label{eq:repF2}
f(u,t):=\mb c(u)+
(A(u,t),B(u,t))\pmt{
\cos \theta(u) & -\sin \theta(u) \\
\sin \theta(u) & \cos \theta(u) 
}\pmt{\mb n(u) \\ \mb b(u)},
\end{equation}
where two functions $A$ and $B$ are 
given by
\begin{align}\label{eq:ab}
(A(u,t),B(u,t))&:=2\int_0^t v 
(\cos \lambda(u,v),\sin \lambda(u,v))
dv, \\
\nonumber
&\phantom{aaaaaaaaaa} \lambda(u,t):=\frac1{\sqrt{2}}\int_0^t \mu(u,v)dv.
\end{align}
We call such a map $f(u,v)$
a {\it normal form} of generalized cuspidal edge, which was
introduced by Fukui \cite{F} see also \cite{HNSUY}.
The following fact was known:
\begin{enumerate}
\item $f(u,v)$ is actually a generalized
cuspidal edge along $C$,
\item $\theta$ gives the {\it cuspidal angle} of $f$ along $\mb c$,
\item $f$ is a cuspidal  edge along $C$ if 
and only if $\mu(u,0)\ne 0$ for $u\in I$,
\item $\kappa_s:=\kappa\cos \theta$
is called the {\it singular curvature function},
and
$\kappa_\nu=\kappa\sin\theta$
coincides with
the {\it limiting normal curvature function} of $f$ along $C$.
\end{enumerate}

The following assertion holds:

\begin{Proposition}\label{lem:Cor:ThmB}
For a given $C^r$-differentiable generalized cuspidal edge $f$ along $C$,
there exists a normal form $g$ of
a generalized cuspidal edge along $C$
such that $g$ is right equivalent to $f$.
\end{Proposition}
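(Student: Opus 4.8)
The plan is to reduce the statement to Lemma~\ref{lem:00}, which already supplies a coordinate system realizing both normalizations built into the notion of normal form.

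First I would shrink $U$ so that the singular set of $f$ is exactly an interval $J\times\{0\}$ on the $u$-axis (possible because generalized cuspidal edge points are non-degenerate of type~I, so the singular set is a regular curve) and so that $f(J\times\{0\})$ is a sub-arc of $C$. Applying Lemma~\ref{lem:00} to $f$ yields a $C^r$-diffeomorphism of neighborhoods of $J\times\{0\}$ fixing the $u$-axis setwise --- hence a right equivalence --- after which, writing $f$ again for the map in the new coordinates $(s,t)$ with $\gamma(s)=(s,0)$ parametrizing the singular set, one has: $\mb c_f(s):=f(s,0)$ is an arc-length parametrization of $C$; for each $s$ the slice $\sigma_s(t):=f(s,t)$ is a generalized cusp at $t=0$ lying in the normal plane $\hat\Pi_s$ of $\mb c_f$ at $\mb c_f(s)$; and $t$ is the half-arc-length parameter of $\sigma_s$ at $t=0$. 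Since the curvature function of $C$ never vanishes (the standing assumption of this section), the Frenet frame $\{\mb e_f(s),\mb n_f(s),\mb b_f(s)\}$ of $\mb c_f$ is $C^r$ in $s$ and $\hat\Pi_s=\mb c_f(s)+\op{span}\{\mb n_f(s),\mb b_f(s)\}$; thus $f(s,t)-\mb c_f(s)=X(s,t)\mb n_f(s)+Y(s,t)\mb b_f(s)$ for $C^r$ functions $X,Y$ vanishing together with their first $t$-derivatives along $t=0$.

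Next I would read off the cuspidal angle. Because $\sigma_s$ is a generalized cusp at $t=0$, the vector $(X_{tt}(s,0),Y_{tt}(s,0))$ is nowhere zero and depends $C^r$-differentiably on $s$, so there are $C^r$ functions $\theta_f(s)$ and $A(s)$ with $A(s)\neq0$ and $(X_{tt}(s,0),Y_{tt}(s,0))=2A(s)(\cos\theta_f(s),-\sin\theta_f(s))$; after the normalization $A>0$ the cuspidal angle $\theta_f$ is determined modulo $2\pi$. Rotating the pair $(\mb n_f,\mb b_f)$ by $\theta_f(s)$ and performing a Hadamard-type division by $t^2$ in the first resulting component and by $t^3$ in the second --- the second component and its first two $t$-derivatives vanishing at $t=0$ by the very choice of $\theta_f$ --- produces the expression \eqref{eq:repF} with $a(s,0)=A(s)\neq0$, and then $\theta_f$ is exactly the cuspidal angle while $\kappa_f\cos\theta_f$, $\kappa_f\sin\theta_f$ are the singular curvature and the limiting normal curvature \eqref{eq;kappa}. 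Finally, for each fixed $s$ the rotation by $\theta_f(s)$ is an isometry of $\R^2$, so the plane curve $t\mapsto(t^2a(s,t),t^3b(s,t))$ is congruent to $\sigma_s$; hence $t$ is still its half-arc-length parameter, and a final reparametrization of $t$ normalizing it as in \cite[Appendix~A]{HNSUY} --- at most a sign change, since Lemma~\ref{lem:00} already produces a half-arc-length parameter --- makes $t$ the normalized half-arc-length parameter while leaving $\mb c_f$ arc-length parametrized. The composition of all these source-coordinate changes is a right equivalence carrying $f$ to a map $g$ in normal form.

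I expect the point requiring the most care to be the passage from Lemma~\ref{lem:00} to \eqref{eq:repF}: namely, checking that $(X_{tt}(s,0),Y_{tt}(s,0))$ is genuinely $C^r$ in $s$ and nowhere vanishing, so that $\theta_f$ and $A$ can be chosen $C^r$ and the divisions by $t^2$ and $t^3$ are legitimate, and confirming that every reparametrization used (including the normalizing one of \cite[Appendix~A]{HNSUY}) is an honest $C^r$-diffeomorphism of the source fixing $J\times\{0\}$. The rest is bookkeeping with the Frenet frame, the essential geometric content being provided by Lemma~\ref{lem:00}.
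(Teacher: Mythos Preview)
Your proposal is correct and follows the same approach as the paper: the paper's proof is the one-liner ``This is a direct consequence of Lemma~\ref{lem:00}; in fact, the parametrization of a generalized cuspidal edge as in Lemma~\ref{lem:00} just gives the normal form of a given generalized cuspidal edge.'' You have simply unpacked what the paper leaves implicit --- the passage from the $(s,t)$-coordinates of Lemma~\ref{lem:00} to the Fukui expression~\eqref{eq:repF} via the Frenet frame, extraction of $\theta_f$, and the Hadamard-type factorizations --- and flagged the final adjustment to the \emph{normalized} half-arc-length parameter of \cite[Appendix~A]{HNSUY}.
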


\begin{proof}
This is a direct consequence of Lemma \ref{lem:00}.
In fact, the parametrization of a generalized cuspidal edge
as in Lemma \ref{lem:00} just can be written in the normal form as shown in
\cite{F} and \cite{HNSUY}.
\end{proof}

We denote by
$
{\mathbb E}^\omega(C)
$
the set of $C^\omega$-differentiable
 generalized cuspidal edges along $C$
which may not be written in the normal form,
but we assume that each $f\in {\mathbb E}^\omega(C)$
is defined on $I \times (-\epsilon, \epsilon)$ for 
some positive $\epsilon$ and $I\ni s\mapsto f(s,0)$
gives the arc-length parametrization of $C$.
Then its singular curvature function $\kappa_s$
coincides with that of its normal form.

\begin{Def}[{cf. \cite[(0.9)]{HNSUY}}]
A $C^r$-differentiable generalized cuspidal edge $f$ along $C$
is said to be {\it admissible} 
if its singular curvature function $\kappa_s$
satisfies
$$
\max_{u\in I}|\kappa_s(u)|<\min_{u\in I}\kappa(u),
$$
where $\kappa$ is the curvature function  of $C$.
\end{Def}

We denote by
$
{\mathbb E}^r_*(C)
$
the set of admissible $C^\omega$-differentiable
 generalized cuspidal 
edges along $C$ belonging to ${\mathbb E}^r(C)$

Moreover, we define the subset 
$
{\mathbb E}^r_{**}(C)
$
of
$
{\mathbb E}^r_*(C)
$
consisting of generalized cuspidal edges 
with non-vanishing singular curvature functions, that is,
$f\in {\mathbb E}^r_{**}(C)$ if and only if
$$
0<\min_{u\in I}|\kappa_s(u)|\le \max_{u\in I}|\kappa_s(u)|<\min_{u\in I}\kappa(u).
$$

As in Theorem II 
in \cite{HNSUY}, 
the following assertion holds: 

\begin{Fact}\label{fact56}
For each $f\in {\mathbb E}^\omega_*(C)$,
there exist non-faithful isomers
$f_*$ $($the inverse$)$, $\check f_*$ $($the inverse dual$)$ 
such that
if $g$ is a $C^\omega$-differentiable admissible 
generalized cuspidal edge along $C$ which is
an isomer of $f$, then $g$ is right-left 
equivalent to one of $\check f$,
$f_*$ and $\check f_*$.
\end{Fact}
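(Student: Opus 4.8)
The plan is to transfer the proof of Theorem~II in \cite{HNSUY} from cuspidal edges to generalized cuspidal edges, with Theorem~B playing the role that the cuspidal-edge image-equivalence lemma of \cite{KRSUY} played there. First I would put $f$ in the normal form of Proposition~\ref{lem:Cor:ThmB}, so that $f$ is encoded by the fixed curve $C$ together with the cuspidal angle $\theta_f(u)$ and the coefficient functions $a(u,v),\,b(u,v)$ of \eqref{eq:repF}, with $u$ the arc-length parameter of $C$ and $v$ the normalized half-arc-length parameter of the sectional cusp; since this normal form is a complete right-equivalence invariant along $C$, two such germs are right equivalent along $C$ exactly when their data coincide.

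Next I would determine which part of the normal-form data is recovered from the first fundamental form $ds^2_f$ alone. As $C$ is fixed its curvature $\kappa$ is given; the singular curvature $\kappa_s=\kappa\cos\theta_f$ is intrinsic, so $\cos\theta_f$ --- hence $\theta_f$ up to sign --- is determined, and an analogous computation using the half-arc-length normalization (here Proposition~\ref{prop:sigma} is the relevant tool for general generalized cusps) recovers $a,b$ up to an overall sign and a possible reparametrization $u\mapsto-u$ of the singular curve. Thus the ambiguity in reconstructing a generalized cuspidal edge along $C$ from the pair $(C,ds^2_f)$ is generated by the two commuting involutions \emph{replace $\theta_f$ by $-\theta_f$} (passage to the dual $\check f$) and \emph{reverse the orientation of $C$} (passage to the inverse). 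I would then define $f_*$ as the normal-form generalized cuspidal edge obtained from the reversed parametrization of $C$ (equivalently, by precomposing with $u\mapsto-u$ on the singular curve and re-normalizing) and set $\check f_*:=(\check f)_*$, which also equals $\check{(f_*)}$, and check that $f_*$ and $\check f_*$ are admissible generalized cuspidal edges along $C$, isometric to $f$, and --- using admissibility, the nonvanishing of $\kappa_s$, and the uniqueness statement \cite[Theorem~3.8]{HNSUY} to rule out coincidences --- not right equivalent to $f$ along $C$; hence they are non-faithful isomers.

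Finally, given an arbitrary admissible isomer $g$ of $f$, equality of first fundamental forms together with the preceding analysis forces the normal-form data of $g$ to agree with that of $f$ up to the two sign changes above, so $g$ is right equivalent along $C$ to one of $f,\check f,f_*,\check f_*$; being an isomer, $g$ is not right equivalent to $f$, which leaves $\check f,f_*,\check f_*$.

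The main obstacle is the bookkeeping in the second step: verifying that, in the normal form with the normalized half-arc-length parameter, $ds^2_f$ pins down $(\theta_f,a,b)$ up to precisely the stated signs and the single reparametrization $u\mapsto-u$, with no further identifications. This is the computation carried out for cuspidal edges in \cite{HNSUY}; the genuinely new point is that sectional cusps need no longer be ordinary cusps (so $b(u,0)$ may vanish), which is why Theorem~B --- rather than the cuspidal-edge lemma, which needs the wave-front condition --- is needed to convert the resulting image equivalence into a $C^r$-diffeomorphism, and why the half-arc-length machinery of Section~3 replaces the ordinary-cusp arguments of \cite{HNSUY}.
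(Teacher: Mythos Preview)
The paper does not prove this statement at all: it is labeled a \emph{Fact} and is simply quoted as Theorem~II of \cite{HNSUY}, which already treats \emph{generalized} cuspidal edges (as the title of \cite{HNSUY} indicates). So there is nothing to ``transfer from cuspidal edges to generalized cuspidal edges'' here; the result you are trying to reprove was established in the earlier paper in the stated generality, and the present paper just cites it.

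More importantly, your proposal misidentifies where Theorem~B enters. The Fact is purely about isometry and right(-left) equivalence: an isomer is defined via coincidence of first fundamental forms, and the classification proceeds by analyzing which normal-form data $(\theta_f,a,b)$ are determined by $ds^2_f$. No image-equivalence statement, and hence no Zakalyukin-type lemma, is needed anywhere in that argument. Theorem~B is used in this paper only for Proposition~\ref{prop:Sec5}, which concerns \emph{congruence} of images (``the image of $\check f$ is congruent to that of $f$''); that is the step where one must pass from an equality of images to a $C^r$-diffeomorphism, and that is the step \cite[Proposition~5.1]{HNSUY} had proved only for cuspidal edges. Your outline of the intrinsic classification (normal form, recover $\cos\theta_f$ from $\kappa_s$, residual sign/orientation ambiguity) is essentially how \cite{HNSUY} argues, but the final paragraph of your proposal grafts Theorem~B onto a step that does not require it. A minor side remark: admissibility ($f\in\mathbb E^\omega_*(C)$) gives nonvanishing of the limiting normal curvature $\kappa_\nu$, not of the singular curvature $\kappa_s$ as you wrote; the latter is the extra hypothesis defining $\mathbb E^\omega_{**}(C)$.
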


\begin{Remark}
In \cite{HNSUY}, these three generalized cuspidal edges
$\check f$,
$f_*$ and $\check f_*$
associated with $f$
are defined on an open subset of the domain of definition of $f$
and their first fundamental forms coincide with that of $f$.
Then $\check f$,
$f_*$ and $\check f_*$ may not be written in normal forms
even when $f$ is a normal form. In fact, the dual $\check f$ 
of generalized cuspidal edge $f$
written in the normal form as in \cite[Example 5.3]{HNSUY}
is not written in a normal form.
\end{Remark}

By Fact \ref{fact56}
with Proposition \ref{prop:Sec5},
all the arguments in
Section 5 in \cite{HNSUY} hold not only for admissible
cuspidal edges
but also for admissible 
generalized cuspidal edge 
without any changes of proofs.
So we obtain the following theorem as a consequence:

\begin{Theorem}[{A generalization of Theorems III and IV in \cite{HNSUY}}]
Let $f$
be a $C^\omega$-differentiable
admissible generalized cuspidal edge along $C$.
Then the number of the right equivalence classes of
$f$, $\check f$, $f_*$ and
$\check f_*$ is four if and only if $ds^2_f$ has no 
symmetries $($the definition of symmetries of
$ds^2_f$ is given in \cite[Definition 0.4]{HNSUY}$)$.
Moreover, let $N_f$ be 
the number of congruence classes of 
the images of the four maps $f$, $\check f$, $f_*$ 
as map germs along $C$.
Then 
\begin{enumerate}
\item if $C$ has no non-trivial symmetries 
and also $ds^2_f$ has no symmetries,
then $N_f=4$,
\item if not the case in {\rm (1)}, it holds that $N_f\le 2$, and
\item $N_f=1$ if and only if 
\begin{enumerate}
\item $C$ lies in a plane and has a non-trivial symmetry, 
\item $C$ lies in a plane and
$ds^2_f$ has a symmetry, or
\item $C$ has a positive symmetry
and $ds^2_f$ also has a symmetry.
\end{enumerate}
\end{enumerate}
\end{Theorem}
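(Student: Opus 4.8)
The plan is to reduce everything to the corresponding assertions of \cite[Theorems III and IV]{HNSUY}, using Theorem~B as the substitute for the wave-front arguments used there. Those theorems were proved for admissible \emph{cuspidal} edges, and their proofs used only three inputs: (i) the classification of isomers recalled in the Fact above; (ii) the identification of \textquotedblleft congruent images\textquotedblright\ with \textquotedblleft right-left equivalence,\textquotedblright\ which for cuspidal edges was supplied by the Zakalyukin-type lemma of \cite{KRSUY}; and (iii) the behaviour of the dual, inverse, and inverse-dual operations under symmetries of $C$ and of $ds^2_f$, together with Proposition~\ref{prop:Sec5}. Input (ii) is precisely what Theorem~B provides for the larger class $\mathbb{E}^\omega_*(C)$: if two admissible generalized cuspidal edges along $C$ have the same image germ along $C$ up to an isometry $T$ of $\R^3$, then $T\circ f_1$ and $f_2$ are right equivalent as map germs along $C$, with $\nu_1=\pm\nu_2\circ\psi$. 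Inputs (i) and (iii) are already available in the generalized setting from the preceding subsections (the normal form of Proposition~\ref{lem:Cor:ThmB} and the Fact).

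First I would establish the first assertion. By the Fact, every admissible $C^\omega$ generalized cuspidal edge isomer of $f$ is, up to right-left equivalence, one of $\check f$, $f_*$, $\check f_*$; inspecting how the operations $f\mapsto\check f$, $f\mapsto f_*$, $f\mapsto\check f_*$ act on the normal form shows that two of the four maps $f,\check f,f_*,\check f_*$ become right equivalent exactly when $ds^2_f$ admits a symmetry in the sense of \cite[Definition~0.4]{HNSUY}. Hence the four are mutually right-inequivalent if and only if $ds^2_f$ has no symmetries. This is the verbatim transcription of the cuspidal-edge argument, now legitimate because Proposition~\ref{lem:Cor:ThmB} supplies the normal form in the generalized setting.

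Next, for $N_f$, I would use Theorem~B to turn any congruence between two of the four images into a pair $(T,\phi)$ with $T\circ f_i\circ\phi=f_j$, and then analyse $T$ exactly as in the proof of Proposition~\ref{prop:Sec5}: either $T$ fixes $C$ pointwise, forcing $C$ to lie in a plane, or $T$ reverses the orientation of $C$, in which case it is a positive or a negative symmetry of $C$. In the negative case, \cite[Remark~4.4]{HNSUY} shows that $T\circ f$ has the same image as $f$, which together with the classification of isomers collapses the count; in the positive case, $\phi$ induces an effective symmetry of $ds^2_f$. Combining these dichotomies with the first assertion gives (1): if $C$ has no non-trivial symmetry and $ds^2_f$ has no symmetry, the four images are mutually non-congruent, so $N_f=4$; otherwise at least one coincidence among the four images is forced, giving $N_f\le 2$, which is (2); and $N_f=1$ holds precisely under the three listed configurations, by the same case check as in \cite[Section~5]{HNSUY}, using \cite[Remark~4.5]{HNSUY} for the planar cases.

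The main obstacle, and the only place requiring genuine care, is verifying that every step of Section~5 of \cite{HNSUY} that appealed to the wave-front structure has a valid counterpart for $\mathbb{E}^\omega_*(C)$: the smoothness of the connecting map (which there came from \cite{KRSUY}) is now Theorem~B; the relevant non-degeneracies are guaranteed by admissibility; and the uniqueness of the dual, used via \cite[Theorem~3.8]{HNSUY}, was already quoted in the Fact. Once these replacements are checked, the proofs transfer \textquotedblleft without any changes,\textquotedblright\ as claimed, and the statement follows.
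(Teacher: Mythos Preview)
Your proposal is correct and matches the paper's own approach: the paper states the theorem as an immediate consequence of the Fact (classification of isomers) together with Proposition~\ref{prop:Sec5}, observing that ``all the arguments in Section~5 in \cite{HNSUY} hold not only for admissible cuspidal edges but also for admissible generalized cuspidal edges without any changes of proofs.'' Your write-up simply makes this transfer explicit, identifying Theorem~B as the replacement for the wave-front Zakalyukin lemma and checking that the remaining ingredients (normal form, uniqueness of the dual, the symmetry dichotomies of \cite[Remark~4.4, Remark~4.5, Theorem~3.8]{HNSUY}) are already available in the generalized setting.
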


\subsection*{Isomers of developable surfaces}

\begin{Def}
A {\it $C^r$-developable strip} along $C$ is a $C^r$-embedding 
$F:U\to \R^3$ defined on a tubular neighborhood $U$ 
of $I\times \{0\}$ in $I\times \R$ such that 
\begin{itemize}
\item $I \ni u\mapsto \mb c(u):=F(u,0)\in \R^3$ gives 
the arc-length parametrization of $C$,
\item there exists a unit vector field $\xi_F(u)$  
of $F$ along $C$ $($called a {\it ruling vector field}$)$
such that $F$ can be expressed as
$$
F(u,v)=\mb c(u)+v \xi_F(u)\qquad ((u,v)\in U),\,\, \text{and}
$$
\item the Gaussian curvature of $F$ vanishes on $U$ identically.
\end{itemize}
\end{Def}

A $C^r$-developable strip $F$ represents a map germ along $C$.
We identify this induced map germ with $F$ itself if it creates no confusion.
We denote by $\mb e(u)$, 
$\mb n(u)$ and $\mb b(u)$ the unit tangent, 
unit principal normal and unit bi-normal vector field, respectively.
With these notations,  we can express $\xi_F$ as
\begin{equation}\label{eq:f-1}
\xi_F(u)=\cos \beta_F(u)\mb e(u)+ \sin \beta_F(u)
\Big(
\cos \alpha_F(u)\mb n(u)+\sin \alpha_F(u)\mb b(u)
\Big).
\end{equation}
This $\alpha_F:I\to \R$ is called the 
{\it first angular function},
and $\beta_F:I\to \R$ is 
called the {\it second angular function} of $F$.
Here, we consider the developable strips satisfying
$0<|\cos \alpha_F|< 1$.
Then, we can choose the first angular function $\alpha_F$ so that 
\begin{equation}\label{eq:alpha_F}
0< |\alpha_F(u)|<\frac{\pi}{2} \qquad (u\in I).
\end{equation}
The fact that
the Gaussian curvature of $F$ vanishes identically 
enable us to write
\begin{equation}\label{eq:beta_f}
\cot \beta_F(u)=\frac{\alpha'_F(u)+
\tau_F(u)}{\kappa_F(u)\sin \alpha_F(u)},
\end{equation}
where $\kappa_F(u)$ and  $\tau_F(u)$ are the curvature and 
torsion functions of $\mb c(u)$.
In particular, we may assume that 
\begin{equation}\label{eq:beta_f2}
0<\beta_F(u)< \pi \qquad (u\in I).
\end{equation}
In particular,  $\xi_F(u)$ satisfies
$\xi_F\cdot \mb n>0$.
We call this $F(u,v)$ a {\it normal form} of a developable strip
along $C$. 
(This definition of normal form is the same as that in \cite{HNSUY}.)
We denote by
$
{\mathbb D}^r(C)
$
the set of $C^r$-developable strips along $C$
written in the normal form.
Comparing the normal form of generalized cuspidal edges
and the normal form of developable strips,
the following map
$$
\Phi:{\mathbb E}^{\omega}_{**}(C)\ni f 
\mapsto \Phi_f \in {\mathbb D}^{\omega}(C)
$$
is uniquely induced so that
the cuspidal angle of $f$ coincides with
the first angular function of $\Phi_f$.
The developable surface $\Phi_f$  was 
introduced in Izumiya-Saji-Takeuchi \cite[Section 5.1]{IST}
as the map producing the \lq\lq osculating developable surface'' associated with
a given cuspidal edge.
So  we  call $\Phi$ the {\it IST-map}. 

In \cite{HNSUY-O1}, for a given developable strip $F$
along $C$, its isomers
$$
\check F,\quad
F_*,\quad \check F_*
$$
are defined, which  are
developable strips along $C$
whose generating curve corresponding to $C$ is 
congruent to that of $F$ in the Euclidean plane $\R^2$.
We remark that the IST-maps have the following
nice property:

\begin{Proposition}
Let $f\in {\mathbb E}^{\omega}_{**}(C)$.
Then it holds that 
$$
\Phi_{\check f}=\check \Phi_f,\quad
\Phi_{f_*}=(\Phi_f)_*, \quad
\Phi_{\check f_*}=(\check \Phi_f)_*.
$$
\end{Proposition}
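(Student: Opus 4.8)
The plan is to exploit the fact that the IST-map $\Phi$ is, by its very definition, governed by a single scalar function. Indeed, $\Phi_f$ is characterized as the element of ${\mathbb D}^{\omega}(C)$ whose first angular function equals the cuspidal angle of $f$, that is,
$$
\alpha_{\Phi_f}(u)=\theta_f(u)\qquad (u\in J);
$$
its second angular function $\beta_{\Phi_f}$ is then forced by the developability relation \eqref{eq:beta_f}, which involves only $\alpha_{\Phi_f}$ together with the curvature $\kappa$ and torsion $\tau$ of $C$. Consequently, two developable strips in ${\mathbb D}^{\omega}(C)$ sharing the same first angular function coincide. So, for each of the three identities, it suffices to compare the first angular functions of the two sides; for instance, for the first one we must verify $\alpha_{\Phi_{\check f}}=\alpha_{\check\Phi_f}$, equivalently $\theta_{\check f}=\alpha_{\check\Phi_f}$.

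Next I would record the explicit descriptions of the two families of isomers. On the generalized cuspidal edge side, Theorem~B guarantees that image equivalence and right-left equivalence agree for admissible generalized cuspidal edges, so that the computations of \cite{HNSUY} apply without change: the dual $\check f$, the inverse $f_*$ and the inverse dual $\check f_*$ are obtained from the normal form \eqref{eq:repF} by explicit operations whose effect on the cuspidal angle is a prescribed transformation --- a sign change for $\theta_f\mapsto\theta_{\check f}$, precomposition with an orientation reversal of $C$ (together with the corresponding sign adjustment) for $\theta_f\mapsto\theta_{f_*}$, and the composition of these for $\theta_f\mapsto\theta_{\check f_*}$. On the developable strip side, \cite{HNSUY-O1} describes the isomers $\check F$, $F_*$, $\check F_*$ of $F\in{\mathbb D}^{\omega}(C)$, and the first angular function transforms under each of them by exactly the same rule; this is to be expected, since the defining datum of a developable strip in normal form is the angle that the ruling vector field makes with the principal normal of $C$ in the normal plane, which plays the same geometric role as the cuspidal direction does for a generalized cuspidal edge.

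The identities then follow by assembling the comparison. Writing $\Theta$ for the common transformation attached to the dual (and, analogously, for the inverse and the inverse dual), we obtain
$$
\alpha_{\Phi_{\check f}}=\theta_{\check f}=\Theta(\theta_f)=\Theta(\alpha_{\Phi_f})=\alpha_{\check\Phi_f},
$$
and, together with the first paragraph, this gives $\Phi_{\check f}=\check\Phi_f$; the relations $\Phi_{f_*}=(\Phi_f)_*$ and $\Phi_{\check f_*}=(\check\Phi_f)_*$ are obtained in the same way. One should also check that admissibility and the non-vanishing of the singular curvature are preserved by all three isomer constructions, so that $\Phi$ is indeed defined on $\check f$, $f_*$ and $\check f_*$; this is immediate, since those constructions alter the singular curvature function $\kappa_s$ only by a sign and by a reparametrization, leaving $\max_{u}|\kappa_s(u)|$ and $\min_{u}|\kappa_s(u)|$ unchanged.

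I expect the main difficulty to lie in the bookkeeping of the middle step. For the orientation-reversing isomers $f_*$ and $\check f_*$ one must make sure that the reparametrization of $C$ needed to put the isomer back into normal form is matched on the two sides, and that the developability relation \eqref{eq:beta_f} --- which is not invariant under $u\mapsto -u$ because of the term $\alpha_F'+\tau_F$ --- is treated consistently with the sign conventions used to define $\check F$, $F_*$, $\check F_*$ in \cite{HNSUY-O1}. Once these conventions are aligned, the proposition becomes a formal consequence of the observation that both a generalized cuspidal edge and its osculating developable are, in normal form, encoded by one and the same scalar function.
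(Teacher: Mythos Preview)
Your approach is correct and coincides with the paper's: both reduce the three identities to the fact that the IST-map depends only on the cuspidal angle $\theta_f$, so it suffices to check that each isomer operation acts on $\theta_f$ and on $\alpha_F$ by the same rule (the paper simply quotes the explicit formulas from \cite{HNSUY} and then derives the third identity from the first two via $\check f_*=(\check f)_*$). One caution on your description of the inverse: the passage $\theta_f\mapsto\theta_{f_*}$ is not merely an orientation reversal with a sign --- the formula from \cite{HNSUY} is $\cos\theta_{f_*}(u)=\dfrac{\kappa(u)}{\kappa(-u)}\cos\theta_f(u)$ with $\theta_f(-u)\theta_{f_*}(u)>0$, so the curvature of $C$ enters nontrivially --- but this does not affect your argument, since all that is used is that the \emph{same} transformation governs $\alpha_{F_*}$.
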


\begin{proof}
As in \cite[Corollary 3.13]{HNSUY}, 
the cuspidal angle of the dual $\check f$
is equal to $-\theta_f$.
On the other hand,  
by \cite[Page 79]{HNSUY},
the cuspidal angle $\theta_{f_*}$ of the dual $f_*$
satisfies
$$
\cos \theta_{f_*}(u)=
\frac{\kappa(u)}{\kappa(-u)}\cos \theta_{f}(u),
\qquad \theta_{f}(-u) \theta_{f_*}(u)>0
$$
on $I$. 
These two facts imply
$\Phi_{\check f}=\check \Phi_{f}$
and
$\Phi_{f_*}=(\Phi_f)_*$, respectively, since the IST-map $\Psi$ is
determined only by $\theta_f$.
Since $\check f_*=(\check f)_*$ holds, the third formula is also 
obtained.
\end{proof}

\begin{figure}[h!]
\begin{center}
\includegraphics[height=1.8cm]{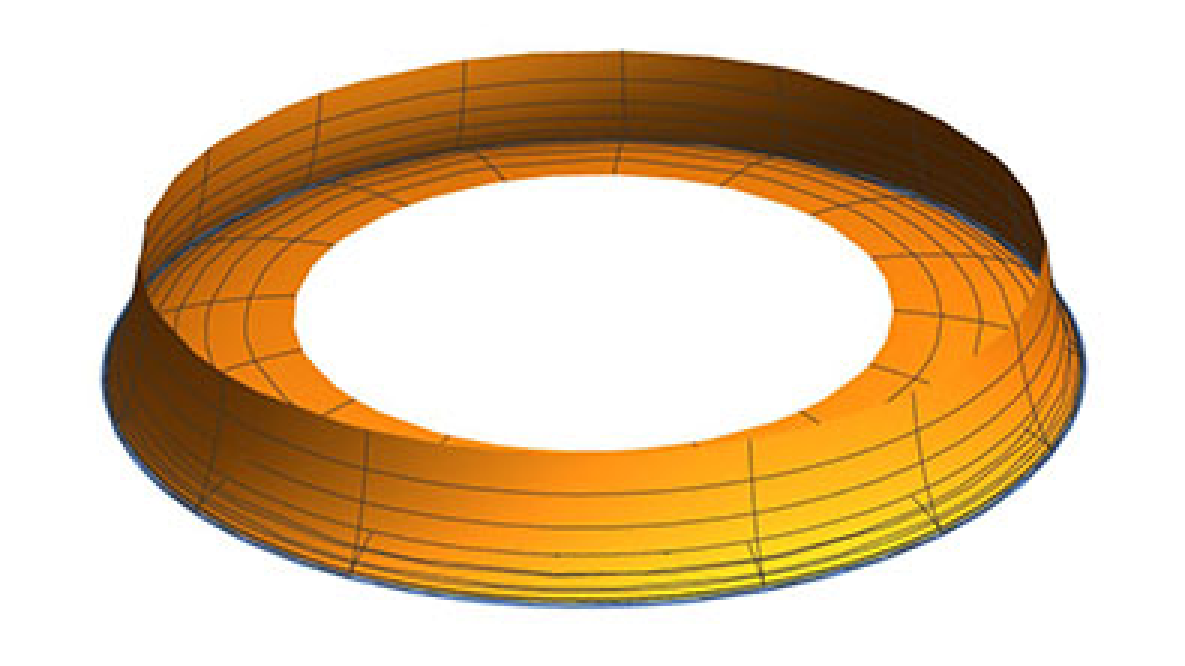}\qquad
\includegraphics[height=2.0cm]{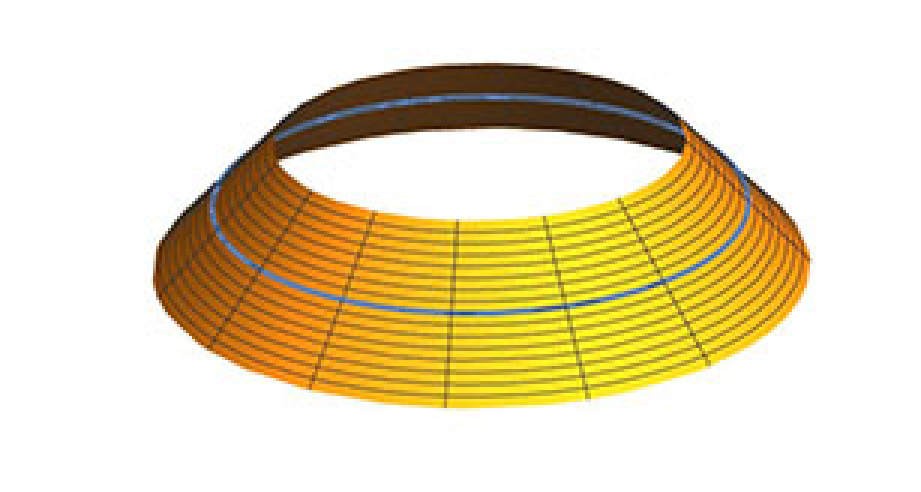}
\includegraphics[height=2.0cm]{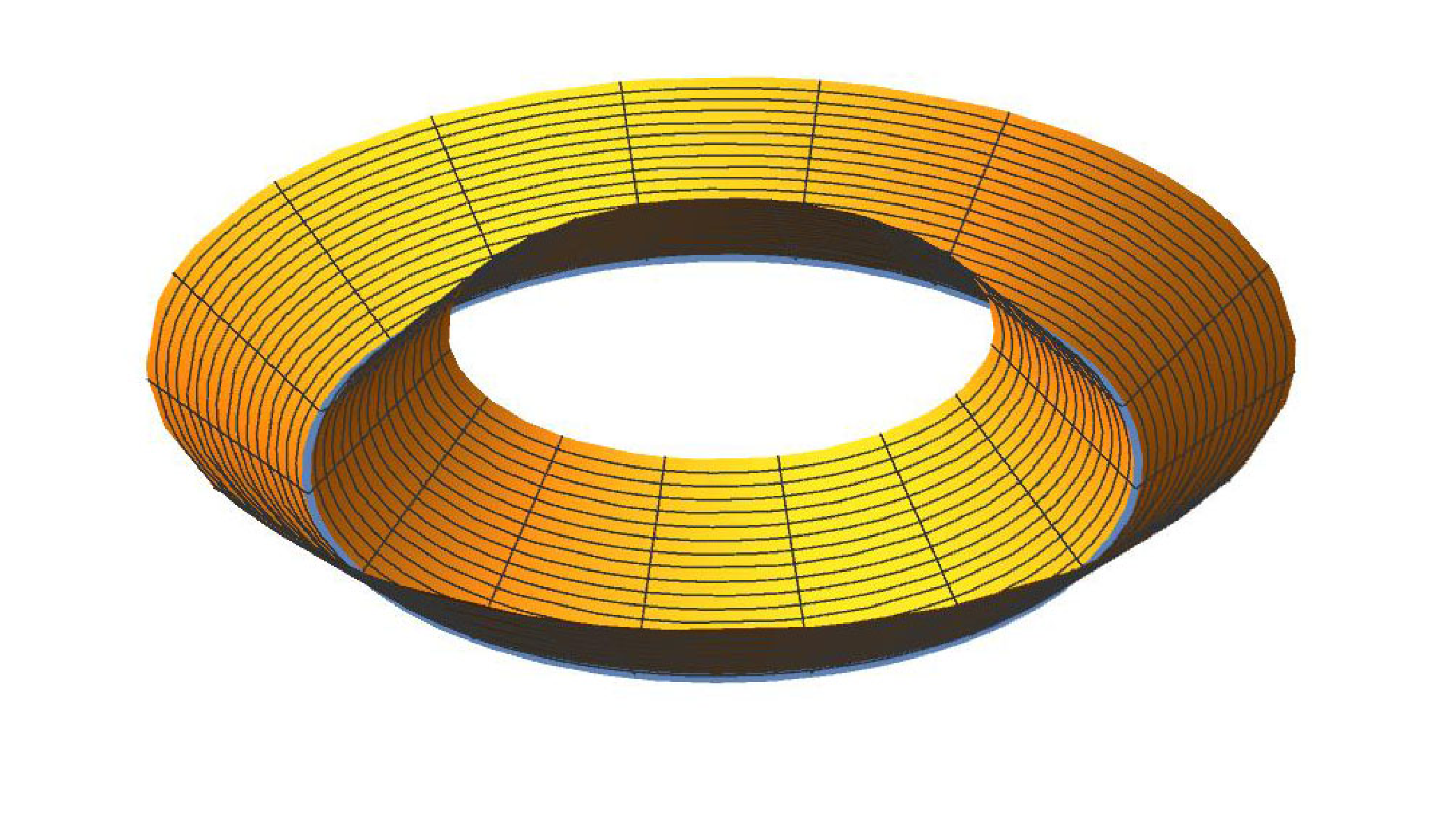}
\caption{A cuspidal edge, a
developable strip and a curved folding along 
a circle}\label{fig:2}
\end{center}
\end{figure}

A curved folding associated with a developable strip $F\in {\mathbb D}^r(C)$
is the image of the  map $\Psi_F$ defined by
$$
\Psi_F(u,v):=
\begin{cases}
F(u,v) & \text{if $u>0$}, \\
\check F(u,v) & \text{if $u<0$}.
\end{cases}
$$
It is well-known that
each  $C^r$-curved folding along $C$ 
whose absolute value of the geodesic curvature 
function (which is common in $F$ and $\check F$)
along $C$ is less than $\kappa$
can be realized as the image of a certain $\Psi_F$
for a suitable choice of $F\in {\mathbb D}^r(C)$.
In the authors' previous work, for a given 
$C^r$-curved folding along $C$,
there are three other curved foldings along $C$ whose crease pattern
is the same as the given curved folding.
By the above proposition, three curved foldings
associated to the image of $\Psi_F$ are
$\check \Psi_{F}$,
$(\Psi_{F})_*$ and
$(\check \Psi_{F})_*$.
Thus, the composition of the two maps $\Phi$ and $\Psi$
connects the isomers of 
$C^\omega$-differentiable generalized cuspidal edges to
isomers of $C^\omega$-curved foldings
(see Figure \ref{fig:2}).

\begin{acknowledgments}
The authors thank  the referee,
Toshizumi Fukui and Wayne Rossman for helpful comments.
\end{acknowledgments}


\begin{thebibliography}{00}

 \bibitem{F}
  T.~Fukui,
  {\it Local differential geometry of cuspidal edge and swallowtail},
Osaka J. Math. {\bf 57} (2020), 961--992.


\bibitem{GG}
 M. Golubitsky and  V.\ Guillemin,
 {\rm Stable mappings and their singularities},
 Graduate Texts in Mathematics, {\bf 14}, Springer-Verlag, 1973. 


\bibitem{HNSUY}
  A.~Honda, K.~Naokawa, K.~Saji, M.~Umehara and K.~Yamada,
  {\itshape 
Duality on generalized cuspidal edges preserving 
singular set images and 
first fundamental forms}, 
J. Singul.
{\bf 22} (2020), 59--91.

\bibitem{HNSUY1}
  A.~Honda, K.~Naokawa, K.~Saji, M.~Umehara and K.~Yamada,
  {\itshape 
Cuspidal edges with the same first fundamental forms along a knot}, 
J.  Knot Theory Ramifications {\bf 29} 
(2020), 2050047.

\bibitem{HNSUY-O1}
A.~Honda, K. Naokawa, K. Saji, M. Umehara, and K. Yamada,
  {\itshape  Curved foldings with common creases and
crease patterns}, Adv. Appl. Math. {\bf 121} (2020),
102083.

\bibitem{HNSUY-O2}
  A.~Honda, K.~Naokawa, K.~Saji, M.~Umehara and K.~Yamada,
  {\itshape  On the existence of four or more curved foldings with common 
creases  and crease patterns}, preprint (arXiv:1911.07166).


\bibitem{HNSUY2}
  A.~Honda, K.~Naokawa, K.~Saji, M.~Umehara and K.~Yamada,
  {\itshape  Symmetries of cross caps}, preprint (arXiv:2105.01967).


\bibitem{HNUY}
A.~Honda, K. Naokawa, M. Umehara, and K. Yamada,
{\it Isometric deformations 
of wave fronts at non-degenerate singular points},
Hiroshima Math. J.
{\bf 50} (2020), 269--312.

\bibitem{IST}
S. Izumiya, K. Saji and N. Takeuchi,
{\it Flat surfaces along cuspidal  edges},
J. Singul.
{\bf 16} (2017), 73--100.

\bibitem{KRSUY}
M. Kokubu, W. Rossman, 
K. Saji, M. Umehara and K. Yamada,   
{\it Singularities of flat fronts in hyperbolic 3-space}, Pacific J. 
Math. {\bf 221}  (2005), 303--351.

\bibitem{KRSUY2}
M. Kokubu, W. Rossman, 
K. Saji, M. Umehara and K. Yamada,   
{\it Addendum:Singularities of flat fronts in hyperbolic 3-space}, Pacific J. 
Math. {\bf 294} (2018), 505--509.

\bibitem{MS}
  L.~Martins and K.~Saji, 
  \emph{Geometric invariants of cuspidal edges},
  Canad. J. Math. {\bf 68} (2016), 455--462.

\bibitem{MSUY}
 L. F. Martins, K. Saji, M. Umehara and K. Yamada,
 \emph{Behavior of Gaussian curvature
	and mean curvature near non-degenerate
	singular points on wave fronts},
 Springer Proc.\ in Math.\ \& Stat. {\bf 154},
 2016, Springer, 247--282.

\bibitem{NUY}
K. Naokawa, M. Umehara, and K. Yamada,
 {\it     Isometric deformations of cuspidal edges},
Tohoku Math. J. {\bf 68}
(2016), 73--90.

\bibitem{SU}
S. Shiba and M. Umehara, 
{\it The behavior of curvature functions at cusps and inflection
points}, Differential Geom. Appl. {\bf 30} (2012), 285--299.

\bibitem{Z}
 V. M. Zakalyukin, 
 \emph{Reconstructions of fronts and caustics depending on a parameter
	and versality of mappings}, 
        J. Soviet Math. {\bf 27} (1984),
	2713--2735. 

\end{thebibliography}
\end{document}